\documentclass[12pt, reqno]{amsart}
\usepackage{amsmath, amsthm, amscd, amsfonts, amssymb, graphicx, xcolor}
\usepackage[bookmarksnumbered, colorlinks, plainpages]{hyperref}
\usepackage{amsmath}
\usepackage{cases}
\usepackage{amssymb}
\usepackage{mathrsfs}
\usepackage{euscript}
\usepackage{dsfont}
\usepackage{graphicx}
\usepackage{float}
\usepackage{graphicx}
\newcommand*\extsim[1][3.0]{\mathop{\scalebox{#1}[1.0]{\hbox{\textasciitilde}}}}

\allowdisplaybreaks[4]
\newtheorem{theorem}{Theorem}[section]
\newtheorem{lemma}{Lemma}[section]

\theoremstyle{definition}

\newtheorem{conjecture}[theorem]{Conjecture}

\theoremstyle{remark}
\newtheorem{remark}[theorem]{Remark}
\numberwithin{equation}{section}

\begin{document}
	\setcounter{page}{1}
	

	
	\title[]{An improvement on the largest prime factors of consecutive integers}
	
	\author[]{Zhiyuan Yang}
	
	\address{School of Mathematics,  Shandong University, Jinan 250100, Shandong, China}
	\email{\tt zhiyuan.yang@mail.sdu.edu.cn}
	
	
	

	
	
	\begin{abstract}
		Let $P^+(n)$ denote the largest prime factor of $n$. One of Erd\H{o}s and Turán’s
		conjectures asserts that the asymptotic density of integers $n$ satisfying $P^+(n)<P^+(n+1)$ is 1/2. In this paper, we prove that this density is larger than 0.280,
		which improves the previous result 0.2017 by Lü and Wang (2025). We also prove that there exists
		a positive density of $n$ such that $P^+(n)<P^+(n+1)<x^{41/107+\varepsilon}$. Define $T_c(x):=\#\{p\leq x:P^+(p-1)\geq p^c\}$.
		For $1/2<c<1$, we also show that\begin{align*}
			\mathop{\lim \sup}_{x\rightarrow\infty}\frac{T_c(x)}{\pi(x)}\leq \min\left(-\frac{7}{2}\log c,\frac{1-\delta}{2c}\right),
		\end{align*}
		where $\delta=\delta(c)>0$.
		\newline
		\newline
		\noindent \textit{Keywords.} Erd\H{o}s-Turán’s conjecture
		\newline
		\noindent 
	\end{abstract} \maketitle
	
	
	\section{Introduction}
	Let $P^+(n)$ denote the largest prime factor of $n$ with the convention that $P^+(1)=1$. In the 1930s, Erd\H{o}s and Turán [22] formulated the following conjecture in a correspondence.
	\begin{conjecture}
		(Erd\H{o}s-Turán) For $x\rightarrow\infty$, we have
		\begin{align*}
			\#\{n\leq x:P^+(n)<P^+(n+1)\}\sim \frac{1}{2}x.
		\end{align*}
	\end{conjecture}
	Conjecture 1.1 was of constant interest for Erd\H{o}s even though he thought it might be intractable by any technique at our disposal (see [9] and [24]). Later in 1978, Erd\H{o}s and Pomerance [10] conjectured that the largest prime factors of $n$ and $n+1$ are ``independent random variables".
	\begin{conjecture}
		(Erd\H{o}s-Pomerance) For any $a,b\in[0,1]$, denote by $B(x;a,b)$ the number of $n\leq x$ with $P^+(n)\leq x^a$ and $P^+(n)\leq x^b$,
		\begin{align*}
			B(x;a,b):=\#\{n\leq x:P^+(n)\leq x^a,P^+(n+1)\leq x^b\}.
		\end{align*}
		Then we have
		\begin{align*}
			\lim_{x\rightarrow\infty}x^{-1}B(x;a,b)=\rho\left(\frac{1}{a}\right)\rho\left(\frac{1}{b}\right),
		\end{align*} 
		where $\rho(t)$ is the Dickman-de Bruijn function.
	\end{conjecture}
	We remark that Conjecture 1.2 implies Conjecture 1.1 (see [25]). In 2011, De Koninck and Doyon [3] formulated a more general conjecture.
	\begin{conjecture}
		(De Koninck-Doyon) Fix an aribitrary integer $k\geq 2$ and let $n$ be a large number. Let $a_1,a_2,\cdots,a_k$ be any permutation of the numbers $0,1,\cdots,k-1$. Then we have\begin{align*}
			\#\{n\leq x:P^+(n+a_1)<P^+(n+a_2)<\cdots<P^+(n+a_k)\}\sim\frac{1}{k!}x.
		\end{align*}
	\end{conjecture}
	In this paper, we mainly focus on Conjecture 1.1. There has been some progress towards these problems. In 1978, Erd\H{o}s and Pomerance [10] first proved that there exists a positive asymptotic density of integers $n$ such that $P^+(n)<P^+(n+1)$. In fact, they showed that\begin{align*}
		\#\{n\leq x:P^+(n)<P^+(n+1)\}>0.0099x.
	\end{align*}
	In 2005, the asymptotic density was improved to 0.05544 by La Bretèche, Pomerance and Tenenbaum [4], and to 0.05866 by Fouvry's arguments in ``Further remarks" of the same paper. Later, the constant 0.05866 was improved successively to 0.1063 and 0.1356 by Wang in [26, 28]. The current record is 0.2017 obtained by Lü and Wang in [17].
	
	On the other hand, in 2001 Rivat in [21] proved a $P^+_y(n)$-version of the Erd\H{o}s-Turán conjecture for some small $y$, where $P^+_y(n)$ denotes the greatest prime factor $p$ of $n$ satisfying $p\leq y$. In fact, Rivat showed that for $3\leq y\leq \exp(\log x/(100\log\log x))$, we have
	\begin{align*}
		\#\{n\leq x:P^+_y(n)<P^+_y(n+1)\}\sim\frac{1}{2}x.
	\end{align*}
	In 2018, Ter\"{a}v\"{a}inen [25] proved a logarithmic version of Conjecture 1.1:
	\begin{align*}
		\delta(\{n\in\mathbb{N}:P^+(n)<P^+(n+1)\})=\frac{1}{2},
	\end{align*}
	where $\delta$ is defined by
	\begin{align*}
		\delta(A)=\lim_{x\rightarrow\infty}\frac{1}{\log x}\sum_{n\leq x,n\in A}\frac{1}{n}
	\end{align*}
	whenever it exists. In 2019, Tao and Ter\"{a}v\"{a}inen [23] showed that there is an exceptional set $\mathscr{X}$ of logarithmic density 0 such that\begin{align*}
		\lim_{x\rightarrow\infty,x\notin\mathscr{X}}\frac{1}{x}\sum_{\substack{n\leq x\\P^+(n)<P^+(n+1)}}1=\frac{1}{2}.
	\end{align*}
	In 2021, Wang [29] showed that Conjecture 1.1 holds under the Elliott-Halberstam conjecture for friable integers. Recently, Jiang, L\"u and Wang [15] proved that Conjecture 1.1 with the pattern $P^+(n)<P^+(n+h)$ holds on average, i.e. for almost of shifts $h$ with $(\log x)^{1+\varepsilon}\leq h\leq x^{1-\varepsilon}$.
	
	In this paper, for $P^+(n)<P^+(n+1)<x^{1-c}$, we have optimized the calculation and advance the idea of L\"u and Wang in several respects (see \S 3.2). We prove that the asymptotic density is larger than 0.280. 
	\begin{theorem}
		For $x\rightarrow\infty$, we have
		\begin{align*}
			\#\{n<x:P^+(n)<P^+(n+1)\}>0.280x.
		\end{align*}
		The lower bound is also true for the pattern $P^+(n)>P^+(n+1)$.
	\end{theorem}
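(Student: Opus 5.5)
Our plan follows the framework of Erd\H{o}s--Pomerance, Fouvry, Wang and L\"u--Wang, refined in the calculation of \S 3.2. By symmetry it suffices to treat one pattern. The map $n\mapsto n+1$ exchanges the roles of $n$ and $n+1$, so the same argument gives the bound for $P^+(n)>P^+(n+1)$. Moreover the set of $n$ with $P^+(n)=P^+(n+1)$ is empty. Hence it suffices to bound from below the number of $n\le x$ with $P^+(n)<P^+(n+1)$.

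We split according to the size of $p=P^+(n+1)$. Write $n+1=pm$ and parametrise $p=x^{1-c}$ with $c\in(0,1)$ (more precisely $p\approx (x/m)$, so $m\approx x^{c}$). We lower-bound
\[
\sum_{p}\#\{m\le x/p:\ P^+(m)<p,\ P^+(pm-1)<p\}.
\]
This splits into two regimes.

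\emph{Large primes.} When $p>x^{1/2}$ we have $m<p$ automatically. The count becomes
\[
\#\{n\le x: P^+(n+1)>\sqrt{x}\}-\#\{n: P^+(n)>P^+(n+1)>\sqrt x\}.
\]
The first term is $\log 2\cdot x$. For the second we use $P^+(n)>P^+(n+1)$, so $n=qk$ with $q>p$. The subtracted count is then bounded above by sieving shifted primes $qk+1=pm$. We use the upper bound sieve (Selberg or linear sieve) with Bombieri--Vinogradov-type level of distribution. In the key range we instead use Fouvry's/Baker--Harman-type results for primes in progressions to large moduli on average, with well-factorable weights, which push the level beyond $1/2$. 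This yields an upper bound of the form $\int\!\!\int \frac{C(\theta)}{\cdots}$ with explicit sieve constants.

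\emph{Smaller primes.} When $p\le x^{1/2}$, we impose $P^+(pm-1)<p$ directly. We count $n+1=pm$ with $n$ being $p$-friable, using lower bounds for friable numbers in arithmetic progressions (Fouvry--Tenenbaum, Harper, Drappeau) averaged over moduli $p$. Here $\rho$-type densities appear; this is where the range $P^+(n+1)<x^{41/107+\varepsilon}$ enters.

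The final step is numerical. We optimise the splitting parameters and the sieve weights (choice of level $\vartheta(c)$ as a function of $c$) and evaluate the resulting integrals involving $\rho$ and the sieve functions $F,f$. The goal is to show that the total exceeds $0.280$. We also use the inequality from the theorem on $T_c(x)$, namely
\[
\limsup T_c/\pi\le\min(-\tfrac72\log c,\tfrac{1-\delta}{2c}),
\]
to improve the upper bound for shifted primes $p-1$ having a large prime factor, which feeds into the subtracted term.

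The main obstacle is the upper bound for $\#\{qk+1=pm\}$ with both $p,q$ large. Getting the sieve level beyond $1/2$ for this bilinear problem (a Brun--Titchmarsh-type bound for $p-1$ with large prime factor) is where the improvement from $0.2017$ must come from. We would first try Fouvry's large-moduli equidistribution combined with switching divisors, and then optimise numerically.
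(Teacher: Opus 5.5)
There is a genuine gap in your large-primes regime, and that regime has to supply nearly all of the $0.280$. Even heuristically, the friable pattern $P^+(n)<P^+(n+1)\le x^{1/2}$ has density only $\rho(2)^2/2\approx0.047$.

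You split at $x^{1/2}$ and bound $\#\{n:P^+(n)>P^+(n+1)>x^{1/2}\}$ by an upper-bound linear sieve. Running that computation (the proof of Lemma 4.1 with $c=1/2$) gives at best
\[
2x\int_0^{1/2}\log\bigl(2(1-t)\bigr)\frac{\mathrm{d}t}{\vartheta(t)-t},
\]
where a modulus $m=x^t$ allows level $\vartheta(t)=4/7$ only for $t<2/7$, and $\vartheta(t)=1/2$ beyond. This is about $1.5x$. It is still about $1.1x$ if you pretend $\vartheta\equiv4/7$. The main term is only $x\log 2\approx0.69x$. The factor $2$ inherent in the upper-bound sieve, together with the collapse of the level as $m\to x^{1/2}$, wipes out the whole regime. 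This is why the paper uses ``total minus sieve'' only for $P^+(n+1)>x^{1-c}$ with $c=0.1348$, where the loss is about $0.038x$.

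The idea you are missing, for $x^{1/2}<P^+(n+1)\le x^{1-c}$, is the L\"u--Wang identity. Write $n+1=dp$ and expand $\log(dp-1)=\sum_{k\mid dp-1}\Lambda(k)$. Then each slice count equals $\frac{1}{\log x}\sum_{p'}\log p'\,\#\{dp\le x:\ p'\mid dp-1\}+o(x)$. Lemma 2.5 shows the part with $p'<x^{1/2}/(\log x)^B$ is exactly half of this, by (4.6)--(4.7). So the $\log p'$-mass above $x^{1/2}$ is also half. Hence at most a proportion $1/(2(1-\eta))$ of the slice has $P^+(n)>x^{1-\eta}$, with no sieve and no level restriction, leaving the positive proportion $(1-2\eta)/(2(1-\eta))$.

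The paper then sharpens this in two ways. The parameter $t_1$ in (4.8)--(4.15) lets the sieve act only on the short band $x^{1-\eta_1}<p'<x^{1-\eta_1+t_1+t_2}$. Near $\eta=1/2$, Theorem 3.2 is used instead: it counts friable $n$ with $p\mid n+1$ for $p$ up to about $x^{0.55}$, via Lemma 3.4.

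The same identity also controls the correction terms $\mathcal{R}$ and $\mathscr{S}'_5$ in (4.17)--(4.20). Your friable regime needs these too once you count $p\mid n+1$ rather than $p=P^+(n+1)$. That change double counts $n+1$ with two prime factors in $(x^{\delta_1},x^{1/2})$, and it overlaps the count with $P^+(n+1)>x^{1/2}$.

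Several other ingredients in your plan are misattributed:
\begin{itemize}
\item The exponent $41/107$ belongs to Theorem 1.5 and plays no role here. The $66/107$ level enters only through Theorem 3.2.
\item Theorem 1.6 is not used, and it would not apply. It concerns primes $p$ with $P^+(p-1)$ large, whereas your subtracted term concerns $dp-1$ with $d$ typically larger than $1$.
\item Level $4/7$ via well-factorable weights was already in Wang's earlier work (see Remark 4.2), so it cannot be the source of the gain over $0.2017$. The gain comes from the parameter $t_1$; from dropping the weight $\frac{\log z}{\log x}\omega(n;y,z)$ in favour of inclusion--exclusion with a sieve upper bound for two large prime factors; from Theorem 3.2; and from optimised numerics.
\item The reverse pattern follows by rerunning the argument with $n$ and $n+1$ interchanged (residue $-1$ replaced by $+1$). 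It does not follow from the impossibility of $P^+(n)=P^+(n+1)$.
\end{itemize}
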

	
	Suppose that $n$ and $n+1$ are two $y$-smooth numbers, where $y=x^{1/u}$. We may guess that
	\begin{align*}
		\#\{n\leq x:P^+(n)<P^+(n+1)<y\}\gg x.
	\end{align*}
	In fact, we can prove that the asymptotic density is $\rho^2(u)/2$ under the Elliott-Halberstam
	conjecture for friable integers by following the argument
	of Theorem 2 in [29].
	We also can get $\#\{n\leq x:P^+(n)<P^+(n+1)<x^{1/2+\varepsilon}\}\gg x$ by a simple derivation (see [28, p.374-375]). In this paper, we obtain the following result.
	
	\begin{theorem}
		For $x\rightarrow\infty$, we have
		\begin{align*}
			\#\{n<x:P^+(n)<P^+(n+1)<x^{41/107+\varepsilon}\}\gg x.
		\end{align*}
		The lower bound is also true for the pattern $P^+(n+1)<P^+(n)<x^{41/107+\varepsilon}$.
	\end{theorem}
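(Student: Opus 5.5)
We prove a lower bound of order $x$ for $P^+(n)<P^+(n+1)<x^{\theta+\varepsilon}$ with $\theta=41/107$. The plan is to write $n+1=mp$, where $p=P^+(n+1)$ is a prime in a window $x^{\theta}<p\le x^{\theta+\varepsilon}$, and then count those $n=mp-1$ for which $P^+(mp-1)<p$. As in Wang's derivation [28] for exponent $1/2+\varepsilon$, we first observe that
\begin{align*}
\#\{n\le x: P^+(n+1)=p\in(x^{\theta},x^{\theta+\varepsilon}],\ P^+(m)<p\}\gg_\varepsilon x,
\end{align*}
by Mertens' theorem together with $\rho(1/\theta)>0$. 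The remaining task is to show that the $n$ in this set with $P^+(n)\ge p$ form a proportion bounded away from $1$.

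For the complement $P^+(n)>p$ we write $n=kq$ with $q>x^{\theta}$ prime and bound the number of solutions of $mp-1=kq$ from above by sieving. There are two cases.
\begin{itemize}
\item If $q>x^{1-\theta}$, then $k<x^{\theta}$ is small. We apply the upper bound linear sieve to the sequence $\{(kq+1)/p\}$, or symmetrically to $\{(mp-1)/k\}$, detecting the primality of $q$.
\item If $x^{\theta}<q\le x^{1-\theta}$, both $p$ and $q$ lie in medium ranges. Here we use a switching / Chen-type argument: we count primes $q$ dividing $mp-1$, with $p$ running over primes in a short multiplicative range, via the level of distribution of primes in arithmetic progressions to smooth or well-factorable moduli.
\end{itemize}
The key input is a Bombieri--Vinogradov-type theorem with level beyond $1/2$ for well-factorable weights (Bombieri--Friedlander--Iwaniec, or Maynard's recent level $3/5$ and the Lichtman-type bounds used in [17]). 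These give an upper bound sieve for primes of the form $(kq+1)/\ell$ whose constant is $2/\vartheta$ rather than $4$, where $\vartheta$ is the level.

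Assembling, the count of bad $n$ with $p\in(x^{\theta},x^{\theta+\varepsilon}]$ is bounded by an integral of the shape
\begin{align*}
\frac{x}{\vartheta}\int\!\!\int \frac{C(\alpha,\beta)}{\alpha\beta}\,\rho\!\left(\frac{1-\alpha}{\alpha}\right)d\alpha\, d\beta ,
\end{align*}
which we compare with the main term $x\int \rho\!\left(\frac{1-\alpha}{\alpha}\right)\frac{d\alpha}{\alpha}$ over the same window. The exponent $41/107$ should be exactly the threshold at which the sieve constant, times the measure of the bad range for $q$, drops below $1$. The plan is to compute this explicitly and check that the inequality holds for every $\theta>41/107$. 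The reversed pattern $P^+(n+1)<P^+(n)$ follows by the symmetric argument with $n$ and $n+1$ interchanged, i.e.\ taking $mp+1$ in place of $mp-1$.

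The main obstacle is the medium range of $q$. There the trivial sieve constant $4$ is too weak, and one needs the strongest available level of distribution with well-factorable weights, plus possibly a Buchstab-type decomposition to gain further. I would try first the level $\vartheta=4/7$ (Bombieri--Friedlander--Iwaniec/Maynard) for the linear sieve, then optimize the split point between the two ranges of $q$ numerically to reach $41/107$.
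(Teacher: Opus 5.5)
Your proposal has a genuine gap, and it sits at the central step. Everything depends on showing that, among the $n$ with $n+1=mp$, $p\in(x^{\theta},x^{\theta+\varepsilon}]$, $P^+(m)<p$, those with $P^+(n)\ge p$ form a proportion bounded away from $1$. You never prove this; you only announce a numerical check. With the tools you list it should not be expected to hold at $\theta=41/107$:

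\begin{itemize}
\item Heuristically the bad proportion is already about $1-\rho(107/41)\approx 0.89$.
\item The range $x^{\theta}<q<x^{1/2}$ of $q=P^+(n)$ contributes $\log(107/82)\approx 0.27$, even with an exact asymptotic from a Bombieri--Vinogradov theorem.
\item For $q>x^{1/2}$, the switching identity behind (4.6)--(4.7) gives only the trivial bound $1$.
\end{itemize}

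So you would need an upper bound in the range $q>x^{1/2}$ within a few percent of the conjectural value $\log 2\approx 0.69$. That is far beyond what an upper-bound sieve of level $4/7$ or $3/5$ delivers. For comparison, Theorem 1.6 at $c=1/2$ gives only about $0.88$ for the analogous quantity for shifted primes. Nothing in your scheme singles out $41/107$ either. That number is $1-66/107$, where $66/107$ is Pascadi's exponent of distribution for friable integers in progressions to a fixed residue (Lemma 2.3). It is not a level for primes with well-factorable weights.

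The missing idea is to reverse the roles of $n$ and $n+1$. Instead of fixing $P^+(n+1)$ and sieving $n$, take $n\in S(x,y)$ with $y=x^{1/C}$ ($C$ as in Lemma 2.3), so that $P^+(n)<y$ for free. Then force $P^+(n+1)$ down by requiring $p_1p_2\mid n+1$ with $x^{33/107-2\varepsilon}<p_1<x^{33/107-\varepsilon}<p_2<x^{33/107}$. This gives $(n+1)/(p_1p_2)< x^{41/107+3\varepsilon}$, hence $y<p_2\le P^+(n+1)\le x^{41/107+3\varepsilon}$. Since $p_1p_2<x^{66/107-\varepsilon}$, Lemma 2.3 with $b=-1$ together with Lemma 2.1 yields
\begin{align*}
\sum_{p_1,p_2}\#\{n\in S(x,y):n\equiv -1\pmod{p_1p_2}\}=(1+o(1))\sum_{p_1,p_2}\frac{\Psi(x,y)}{\varphi(p_1p_2)}\gg_{\varepsilon}\rho(C)\,x .
\end{align*}
Each $n$ is counted boundedly often, because $n+1$ has at most three prime factors exceeding $x^{0.3}$.

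Splitting the modulus into two primes of size about $x^{33/107}$ is what lets you exploit a level beyond $1/2$ without putting a prime factor larger than $x^{41/107}$ into $n+1$. A single prime modulus used with Lemma 2.2 only gives the exponent $1/2+\varepsilon$. The reversed pattern is proved the same way, with $n+1$ friable and $b=1$.
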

	
	Define
	\begin{align*}
		T_c(x):=\#\{p\leq x:P^+(p-1)\geq p^c\}.
	\end{align*}
	We are interested in the upper bound of $T_c(x)$. By the proof of a former result of Erd\H{o}s [8, p.212--213], one could conclude that
	\begin{align*}
		\mathop{\lim\sup}_{x\rightarrow \infty}\frac{T_c(x)}{\pi(x)}\rightarrow0, \ \text{as}\ c\rightarrow 1.
	\end{align*}
	In [2], Chen and Chen stated a conjecture that implies in particular that for $1/2\leq c<1$, one has
	\begin{align*}
		\mathop{\lim\inf}_{x\rightarrow \infty}\frac{T_c(x)}{\pi(x)}\geq \frac{1}{2}.
	\end{align*}
	In 2023,
	Ding [5] disproved this conjecture by showing the existence of $\delta>0$ such that\begin{align*}
		\mathop{\lim\sup}_{x\rightarrow\infty}\frac{T_c(x)}{\pi(x)}<\frac{1-2\delta}{2c}
	\end{align*}
	for $3/4<c<1$.
	Subsequently, Ding [6] obtained a quantitative form of Erd\H{o}s’
	result:\begin{align*}
		\mathop{\lim\sup}_{x\rightarrow\infty}\frac{T_c(x)}{\pi(x)}\leq 8(c^{-1}-1).
	\end{align*}
	Very recently, Ding and Wang [7] improved Ding’s upper bound to
	\begin{align*}
		\mathop{\lim\sup}_{x\rightarrow\infty}\frac{T_c(x)}{\pi(x)}\leq -\frac{7}{2}\log c.
	\end{align*}
	In this paper, the method used to prove Theorem 1.4 can also be applied to the upper bound of $T_c(x)$. In fact, we show that
	\begin{theorem}
		For $1/2<c<1$, we have\begin{align*}
			\mathop{\lim \sup}_{x\rightarrow\infty}\frac{T_c(x)}{\pi(x)}\leq \min\left(-\frac{7}{2}\log c,\frac{1-\delta}{2c}\right),
		\end{align*}
		where $\delta=\delta(c)>0$ given by
		\begin{figure}[ht]
			\centering
			\includegraphics[width=0.9\textwidth]{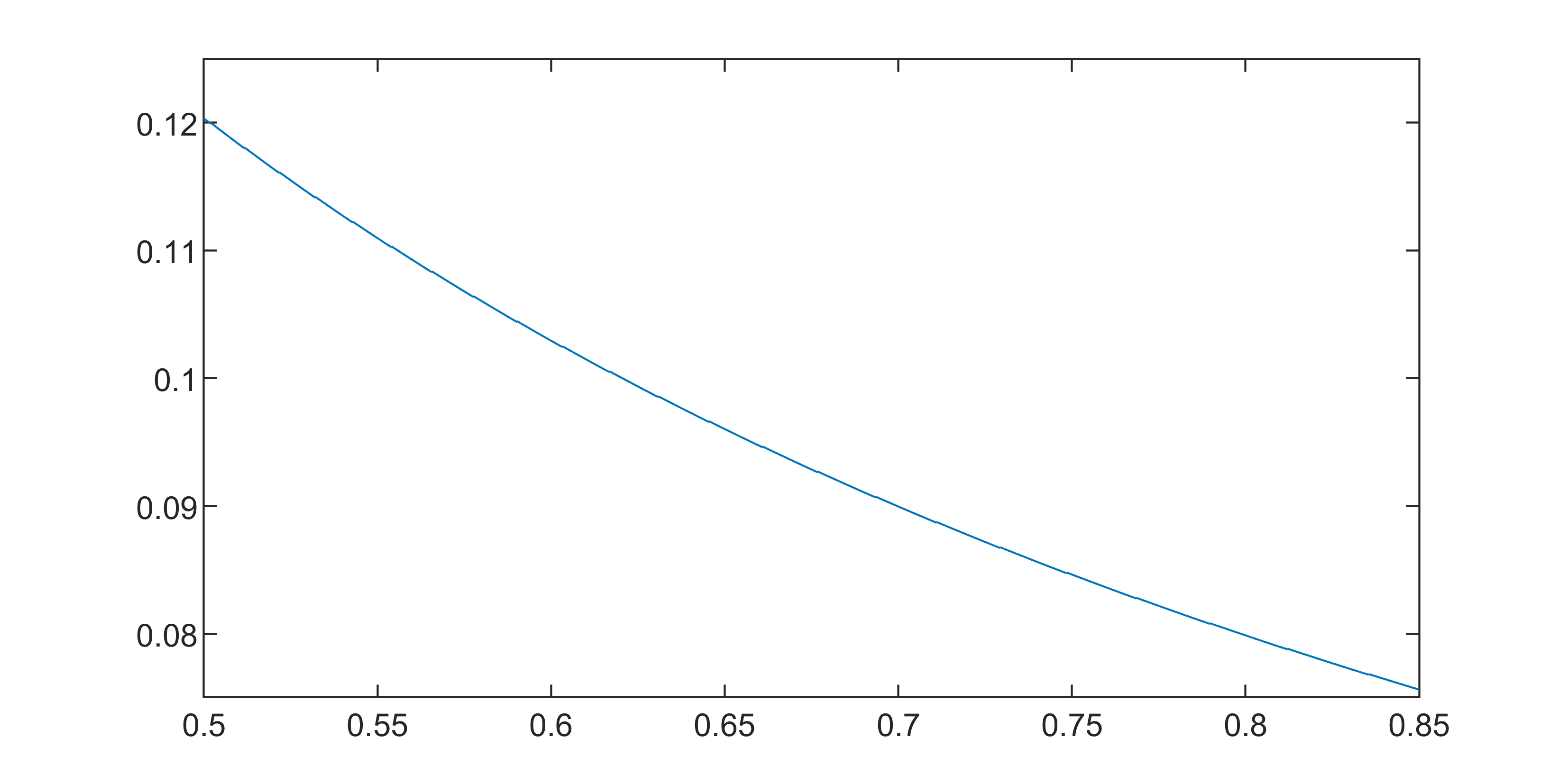} 
			\caption{$c\rightarrow\delta(c)$}
		\end{figure}\\
		In particular, we can obtain a better value for $\delta$ by following the argument in Remark 6.2.
		
	\end{theorem}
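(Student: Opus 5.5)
The bound $-\frac{7}{2}\log c$ is exactly the Ding--Wang result quoted above, so only $\frac{1-\delta}{2c}$ needs proof. I will follow Ding's strategy. Every prime $p\le x$ with $P^+(p-1)\ge p^c$ can be written as $p-1=qm$ with $q=P^+(p-1)$ a prime, $q\ge p^c$, and $m\le p^{1-c}$. Up to a negligible set of $p\le x^{1-\varepsilon}$, we have
\begin{align*}
T_c(x)\le \sum_{m\le x^{1-c}}\#\{q\ \text{prime}: x^{c}\le q\le x/m,\ mq+1\ \text{prime}\}+o(\pi(x)).
\end{align*}
The inner count is handled by a weighted upper bound sieve, of Selberg or linear-sieve type, applied to the set $\{mq+1\}$ with $q$ running over primes. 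This is done on average over $m$ (equivalently over $q$), using Bombieri--Vinogradov, or better a Bombieri--Vinogradov type estimate for the convolution $1_{m}\ast 1_{\text{prime}}$.

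A plain level of distribution $1/2$ gives sieve dimension $1$ with $\Lambda$-weights and the trivial bound $\frac{2}{1}\cdot\frac{1}{2c}\cdot\frac12\cdots$. After the harmonic sum over $m$ is evaluated through $\sum_{m}\frac{1}{\varphi(m)}\cdot\frac{m}{\log(x/m)}$, this recovers exactly $\frac{1}{2c}$. It is the same computation as in Ding's paper, and I will redo it carefully to fix the constants.

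The saving $\delta$ comes from the method of Theorem 1.4. Instead of bounding each $p$ by the sieve for $p$, I count $p-1=qm$ in two ways: by sieving the prime $p$, and by sieving the prime $q=(p-1)/m$. Equivalently, I use a Chen/Fouvry-type switching in which the roles of the two prime variables are exchanged. Since $m\le x^{1-c}<x^{1/2}$, the variable $m$ is small, and the bilinear structure $qm$ can be exploited. Because one factor is at most $x^{1-c}$, the level of distribution for $p=qm+1$ with $p\equiv 1 \pmod d$ exceeds $1/2$ in the well-factorable sense, by results of Bombieri--Friedlander--Iwaniec type for convolutions.

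Concretely, I take upper bound linear sieve weights $\lambda^+$ of level $D=x^{\theta(c)}$ with $\theta(c)>1/2$ when $c<1$, supported on well-factorable moduli. The main term is then $\frac{2}{\theta}\cdot(\text{expected count})$ in place of $\frac{2}{1/2}$. Optimizing the resulting integral $\int_{c}^{1}\frac{dt}{t}\cdots$ produces $\frac{1}{2c}(1-\delta(c))$ with explicit $\delta(c)>0$, which is the curve plotted in the figure. I also plan to add a Buchstab-type subtraction: discard $m$ whose large prime factors make $p$ have small $q$, as in Remark 6.2. This would improve $\delta$ but is not needed for positivity.

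The main obstacle is securing the level of distribution beyond $1/2$ uniformly in the needed range, with well-factorable weights, for the sequence $\{qm+1\}$ with $m\le x^{1-c}$ and $q$ prime. Everything hinges on the admissible exponent $\theta(c)$. I would try first to reduce to the bilinear Type II estimates already used for Theorem 1.4, namely dispersion estimates for $\sum_{m}\sum_{q}\sum_{d}\lambda(d)$ with $d=rs$ factored. I then verify that the ranges $m\sim x^{1-c}$ fit the constraints for $c$ down to $1/2$. Near $c=1/2$ the range narrows, and I expect $\delta(c)\to0$ there, consistent with the statement being only for $c>1/2$.
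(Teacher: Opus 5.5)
There is a genuine gap: the argument you outline cannot produce the second entry of the minimum. An upper-bound linear sieve of level $x^{\theta}$ applied to $\{mq+1\}$ (or to $(p-1)/m$) and summed over $m=x^{u}$, $u\le 1-c$, gives at best a bound of the shape $\frac{2}{\theta}\int_0^{1-c}\frac{du}{1-u}\,\pi(x)=\frac{2}{\theta}\log\frac1c\,\pi(x)$.

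\begin{itemize}
\item With $\theta=4/7$ this is exactly Ding--Wang's $-\frac72\log c$.
\item With $\theta=1/2$ it is $4\log\frac1c$, not $\frac{1}{2c}$ as you claim.
\item Since $\theta\le 1$, even under Elliott--Halberstam, the bound is at least $2\log\frac1c\to 2\log 2\approx 1.39>1=\lim_{c\to 1/2^+}\frac{1}{2c}$.
\end{itemize}

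So no gain in the level of distribution yields $\frac{1-\delta(c)}{2c}$ with $\delta(c)>0$ on all of $(\frac12,1)$. With the level $4/7$ actually available, a bound of your type beats $\frac{1}{2c}$ only for roughly $c>0.84$. Your plan therefore at best sharpens a bound of the form $K\log\frac1c$, i.e.\ the first entry of the minimum.

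Two smaller points. Your expectation that $\delta(c)\to 0$ as $c\to\frac12$ conflicts with the paper, where $\delta(\frac12)>0.12$. Also, Remark 6.2 is about Brun--Titchmarsh bounds for almost all moduli $q\sim x^{\eta}$, not a Buchstab subtraction.

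The missing idea is that $\frac{1}{2c}$ is not a sieve bound. It comes from $\log(p-1)=\sum_{d\mid p-1}\Lambda(d)$ together with Bombieri--Vinogradov, which give (6.1): the count of pairs $p'\mid p-1$ with $p'\ge x^{1/2}/(\log x)^B$, weighted by $\frac{\log p'}{\log x}$, is $\sim\frac12\pi(x)$. Each $p$ counted by $T_c$ has a unique $p'>x^c$, of weight $>c$, so $T_c\lesssim\frac{1}{2c}\pi(x)$.

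The paper improves this by reweighting:
\begin{itemize}
\item Write $1=\frac{\log p'}{(c+t_1)\log x}+\bigl(1-\frac{\log p'}{(c+t_1)\log x}\bigr)$, which gives $\frac{1}{2(c+t_1)}\frac{x}{\log x}-\mathcal{D}$. It then suffices to show $\mathcal{D}\ge 0$.
\item Split off $\bigl(\frac{1}{c+t_1}-\frac{1}{c+t_1+t_2}\bigr)$ times the full weighted mass, which (6.1) evaluates exactly.
\item What remains negative is only $\mathcal{E}$. It is supported on $x^c<p'<x^{c+t_1+t_2}$, with weight $1-\frac{\log p'}{(c+t_1+t_2)\log x}$ vanishing at the top of the window.
\item Only $\mathcal{E}$ is bounded by a sieve, using Ding--Wang's constant $\frac72$. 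This gives $\frac72\bigl(\log\frac{c+t_1+t_2}{c}-\frac{t_1+t_2}{c+t_1+t_2}\bigr)=O((t_1+t_2)^2)$.
\end{itemize}

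The gain $\frac12\bigl(\frac{1}{c+t_1}-\frac{1}{c+t_1+t_2}\bigr)$ is of order $t_2$, while $\mathcal{E}$ is quadratic. Hence (6.4) holds for some small $t_1>0$ at every $c\in(\frac12,1)$, with $\delta=t_1/(c+t_1)$. Your proposal needs this exact asymptotic plus reweighting step, with the sieve confined to a short window, in place of a global sieve with a larger level.
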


	\section{Lemmas}
	\begin{lemma}
		For $\varepsilon>0$, we have
		\begin{align*}
			\Psi(x,y)=\sum_{n\leq x,P^+(n)\leq y}1=x\rho(u)\left(1+O_\varepsilon\left(\frac{\log(u+1)}{\log y}\right)\right)
		\end{align*}
		uniformly for
		\begin{align*}
			x\geq x_0(\varepsilon),\qquad \exp{(\log_2x)^{5/3+\varepsilon}}\leq y\leq x,
		\end{align*}
		where $u=\log x/\log y$ and $\rho(u)$ is the Dickman-de Bruijn function which is defined by differential equation
		\begin{equation}
			\begin{aligned}
				&\left\{ \begin{aligned}
					&\rho(u)=1, \quad&& 0\leq u\leq 1,\\&u\rho'(u)=-\rho(u-1)\quad&& u>1.
				\end{aligned}\right.\nonumber
			\end{aligned}
		\end{equation}
	\end{lemma}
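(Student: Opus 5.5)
This is Hildebrand's theorem on smooth numbers. In the paper it would simply be cited. Still, here is how I would prove it, following Hildebrand's approach.

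\textbf{Step 1: the convolution identity.} Start from the weighted identity
\begin{align*}
\Psi(x,y)\log x=\int_1^x\frac{\Psi(t,y)}{t}\,dt+\sum_{\substack{p^m\leq x\\ p\leq y}}\Psi\!\left(\frac{x}{p^m},y\right)\log p ,
\end{align*}
which comes from writing $\log n=\sum_{d\mid n}\Lambda(d)$ for $y$-smooth $n$. The integral term is $O(\Psi(x,y))$, using the crude monotonicity $\Psi(t,y)\le \Psi(x,y)$. For the prime sum, use the prime number theorem with the Vinogradov--Korobov error term, $\pi(t)=\mathrm{li}(t)+O(t\exp(-c(\log t)^{3/5}(\log_2 t)^{-1/5}))$. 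Partial summation then turns the prime sum into
\begin{align*}
\int_1^y \Psi\!\left(\frac{x}{t},y\right)\frac{dt}{t}
\end{align*}
plus an error. This mirrors the identity $u\rho(u)=\int_{u-1}^{u}\rho(v)\,dv$ satisfied by $\rho$.

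\textbf{Step 2: compare with $\rho$.} Set $\Delta(x,y)=\Psi(x,y)/(x\rho(u))-1$. Subtracting the exact identity for $x\rho(u)$ from the one in Step 1 gives an integral inequality: $u\,|\Delta(x,y)|$ is at most the average of $|\Delta|$ over $u-1\le v\le u$, plus an error term relative to $\rho(u)$. The main inputs needed here are:
\begin{itemize}
\item $\rho(u-v)/\rho(u)\ll (u\log u)^{v}$ for $0\le v\le1$;
\item the error $E(y)=\exp(-(\log y)^{3/5-\varepsilon})$ coming from the PNT remainder.
\end{itemize}
One then proves $|\Delta|\ll \log(u+1)/\log y$ by induction on $u$ in steps of $1/2$, say. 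The base case $u\le 2$ is immediate from the PNT.

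\textbf{Main obstacle.} The hard part is the range of $u$. The PNT error is relative to the size of the terms, but after dividing by $\rho(u)\approx \exp(-u\log u)$ it is amplified by $e^{u\log u}$. The induction closes only while
\begin{align*}
u\log u\ \le\ (\log y)^{3/5-\varepsilon},
\end{align*}
which is exactly the condition $y\ge \exp((\log_2 x)^{5/3+\varepsilon})$. Two further points need care:
\begin{itemize}
\item Avoid losing a factor of $u$ at each step of the induction. This is handled by an iteration in which the errors telescope, using the decay of $\rho'/\rho\sim-\log u$.
\item Near the range limit, prove first a weaker, local-smoothness estimate: $\Psi(x/d,y)\approx \Psi(x,y)/d^{\alpha}$ with the saddle point $\alpha=\alpha(x,y)$. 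This controls the discrepancy between $\Psi(x/t,y)$ and $x\rho(u-\log t/\log y)/t$ uniformly.
\end{itemize}
Once these are in place, the bound $|\Delta|\ll \log(u+1)/\log y$ follows throughout the stated range.
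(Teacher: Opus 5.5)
Your citation is right: the paper gives no argument and simply invokes Hildebrand's theorem [13, Theorem 1]. The sketch you add, however, gets the crux wrong, and as written it would only give de Bruijn's older range.

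You say the remainder in the prime number theorem is amplified by $1/\rho(u)\approx e^{u\log u}$, so the induction closes while $u\log u\le(\log y)^{3/5-\varepsilon}$, and that this is ``exactly'' the condition $y\ge\exp((\log_2x)^{5/3+\varepsilon})$. It is not. With $u=\log x/\log y$, the inequality $u\log u\le(\log y)^{3/5-\varepsilon}$ amounts to roughly $\log y\ge(\log x)^{5/8+\varepsilon}$, which is de Bruijn's range. For example, $y=\exp((\log x)^{1/2})$ lies in the range of the lemma, yet $u\log u\asymp(\log x)^{1/2}\log_2x$ there, far exceeding $(\log y)^{3/5}=(\log x)^{3/10}$. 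The range $y\ge\exp((\log_2x)^{5/3+\varepsilon})$ is instead essentially equivalent to $\log u\le(\log y)^{3/5-\varepsilon'}$, i.e. $u\le\exp((\log y)^{3/5-\varepsilon'})$.

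The missing idea is why the loss is only polynomial in $u$.
\begin{itemize}
\item The prime number theorem enters only through $\sum_{p\le y}\log p\,\Psi(x/p,y)$. So the remainder appears only at points $t=y^s$ with $0\le s\le1$, each weighted by $\Psi(x/t,y)$.
\item Since $x/t<x$, the induction hypothesis applies to $\Psi(x/t,y)$. Combined with $\rho(u-s)/\rho(u)\approx e^{s\xi(u)}$, where $e^{\xi(u)}=1+u\xi(u)\approx u\log u$, it gives $\Psi(x/t,y)/\Psi(x,y)\approx t^{-1}e^{s\xi(u)}$.
\item Hence the amplification is at most $e^{\xi(u)}\approx u\log u$, never $1/\rho(u)$.
\item Against the relative remainder $\exp(-(\log t)^{3/5-\varepsilon})$ at $t=y^s$, this is harmless as long as $\xi(u)\approx\log(u\log u)$ is at most a small multiple of $(\log y)^{3/5-\varepsilon}$. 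That is precisely Hildebrand's range.
\end{itemize}
This uses your own first ``main input'' $\rho(u-v)/\rho(u)\ll(u\log u)^v$, which your obstacle paragraph then contradicts. For the same reason, no separate saddle-point estimate $\Psi(x/d,y)\approx\Psi(x,y)d^{-\alpha}$ is needed: the induction hypothesis already controls $\Psi(x/t,y)$ for $t\le y$.

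There is also a smaller slip in Step 1. Monotonicity $\Psi(t,y)\le\Psi(x,y)$ only gives $\int_1^x\Psi(t,y)\,dt/t\le\Psi(x,y)\log x$, which is the size of the main term. To get $O(\Psi(x,y))$ you need genuine decay, such as $\Psi(t,y)\ll(t/x)^{c}\Psi(x,y)$ with some fixed $c>0$. This again follows from the induction hypothesis and the behaviour of $\rho$.
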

	\begin{proof}
		This is [13, Theorem 1].
	\end{proof}
	
	\begin{lemma}
		For any given positive constant $A>0$, there exists a constant $B=B(A)>0$ such that the estimate
		\begin{align*}
			\sum_{q\leq x^{1/2}/(\log x)^B}\max_{t\leq x}\max_{(a,q)=1}\Big|\sum_{\substack{n\leq t,P^+(n)\leq y\\n\equiv a\mkern-15mu\pmod{q}}}1-\frac{1}{\varphi(q)}\sum_{\substack{n\leq t,P^+(n)\leq y\\(n,q)=1}}1\Big|\ll\frac{x}{(\log x)^A}
		\end{align*}
		holds uniformly for $2\leq y\leq x$.
	\end{lemma}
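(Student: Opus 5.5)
This is the Bombieri--Vinogradov type theorem for $y$-friable integers, so the plan is to follow the classical large-sieve route. The one extra input is that the indicator of $\{n: P^+(n)\le y\}$ admits a flexible bilinear (Type II) factorization.

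First I reduce to primitive characters. By orthogonality,
\begin{align*}
\sum_{\substack{n\le t,P^+(n)\le y\\ n\equiv a \bmod q}}1-\frac{1}{\varphi(q)}\sum_{\substack{n\le t,P^+(n)\le y\\ (n,q)=1}}1=\frac{1}{\varphi(q)}\sum_{\chi\ne\chi_0}\bar\chi(a)\sum_{\substack{n\le t\\ P^+(n)\le y}}\chi(n).
\end{align*}
Each $\chi$ is induced by a primitive $\chi^*$ modulo $r\mid q$. Since the friable set is multiplicative, replacing $\chi$ by $\chi^*$ only changes the summation by removing multiples of primes dividing $q$, and this can be controlled by the usual argument. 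The task then reduces to bounding
\[
\sum_{r\le Q}\frac{r}{\varphi(r)}\sum_{\chi \bmod r}^{*}\max_{t\le x}\Big|\sum_{\substack{n\le t\\ P^+(n)\le y}}\chi(n)\Big|,
\]
with $Q=x^{1/2}(\log x)^{-B}$, up to harmless logarithmic factors.

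I then split $r$ into two ranges. For small moduli $r\le (\log x)^{C}$, I need a Siegel--Walfisz type bound $\sum_{n\le t,P^+(n)\le y}\chi(n)\ll x(\log x)^{-A'}$ for nonprincipal $\chi$, uniformly in $2\le y\le x$. When $y$ is large I would decompose $n=mp$ according to its largest prime $p$. This gives $\sum_{p\le y}\chi(p)\sum_{m\le t/p,P^+(m)\le p}\chi(m)$ (with the convention that $P^+(m)\le p$), and the prime sum is handled by Siegel--Walfisz through partial summation over dyadic ranges of $p$. When $y\le \exp((\log x)^{\epsilon})$ the total count $\Psi(x,y)$ is already $\ll x(\log x)^{-A}$, for instance by Rankin's trick or by Lemma 2.1 in its range, so the trivial bound suffices.

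For large moduli $(\log x)^C<r\le Q$, I use the key structural fact: every $y$-friable $n>x^{1/2}$ (say) can be written uniquely as $n=m\ell$. Here $m$ is the smallest divisor-prefix exceeding a threshold $M$, obtained by taking primes in increasing order, so that $M<m\le My$, and $\ell$ is friable with prime factors at least $P^+(m)$. This produces a genuine bilinear form $\sum_m\sum_\ell \alpha_m\beta_\ell\chi(m\ell)$. The coupling condition $P^-(\ell)\ge P^+(m)$ and the constraint $m\ell\le t$ are removed by Perron's formula or by a dyadic splitting of $P^+(m)$ into ranges $(1+\eta)^j$, at a cost of $\log$ factors. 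The multiplicative large sieve for bilinear forms then gives
\[
\sum_{R<r\le 2R}\frac{r}{\varphi(r)}\sum_\chi^*\Big|\sum\sum\alpha_m\beta_\ell\chi(m\ell)\Big|\ll (M+R^2)^{1/2}(x/M+R^2)^{1/2}x^{1/2}(\log x)^{c}.
\]
Dividing by $R$ and choosing $M\approx x^{1/2}/y$ when $y$ is small yields $x(\log x)^{c}\big(R^{-1}+x^{-1/2}R+\dots\big)$, which is acceptable for $R\le Q$. When $y>x^{1/2}$ I instead peel off the largest prime and treat the resulting sum $\sum_p\chi(p)F(t/p)$ by Vaughan's identity, exactly as in the classical Bombieri--Vinogradov theorem.

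The main obstacle is uniformity in $y$ across the whole range $2\le y\le x$, in particular the intermediate range where $y$ is around $x^{1/2}$. There neither a clean Type II factorization with $M\le x^{1/2}$ nor the pure prime argument alone suffices. My first attempt would be to combine the two: write $n=pm$ with $p=P^+(n)$, apply the Type II factorization to $m$ when $p\le x^{1/2}$, and use Vaughan's identity for $p$ when $p>x^{1/2}$, where $m<x^{1/2}$ is short and acts as a smooth coefficient. Alternatively, I would simply quote the known result of Fouvry--Tenenbaum/Harper, which this lemma presumably is.
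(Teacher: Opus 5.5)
The paper gives no argument of its own for this lemma. It quotes Wolke [30], who proved the analogue for sifted numbers and announced that the friable case is analogous, and Fouvry--Tenenbaum [12, Theorem 6]. Your closing fallback is therefore exactly the paper's proof.

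Everything before that fallback is a genuinely different route: a self-contained large-sieve proof of the cited theorem. It uses a bilinear factorization of friable $n$ for large conductors, Siegel--Walfisz for conductors $\le(\log x)^C$, and classical Bombieri--Vinogradov for the part carried by a single prime $>x^{1/2}$. This is the standard route and it can be completed; what it buys is transparency, at considerable length, whereas the paper only needs the statement. As written, however, it has three soft spots.

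First, passing from $\chi$ to $\chi^*$ is not ``the usual argument.'' For primes the discrepancy is negligible, but friable $n$ sharing a prime with $q$ form a positive proportion, so the discrepancy cannot be dropped. Write $q=rk$ and keep the condition $(n,k)=1$. It factors as $1_{(m,k)=1}1_{(\ell,k)=1}$ in your bilinear form, and the outer sum $\sum_k 1/\varphi(k)$ costs only one $\log x$. Note also that your reduced sum should carry the weight $1/\varphi(r)$, not $r/\varphi(r)$.

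Second, cutting $P^+(m)$ into ranges $(1+\eta)^j$ does not cost only logarithms. It leaves an exceptional set of $n$ having two prime factors in one short multiplicative window. Bounding that set trivially through characters loses a factor $Q\approx x^{1/2}$, so you would need an extra Brun--Titchmarsh-type input. Use Perron instead: distinct primes $\le y$ are $\gg 1/y$ apart on the logarithmic scale. After shifting by $\frac12\log(1+1/y)$ to handle the case $P^+(m)=P^-(\ell)$, you can truncate at $T=x^2$ with negligible error and lose only $\log x$.

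Third, your ``main obstacle'' at $y\approx x^{1/2}$ is an artefact of choosing the threshold $M\approx x^{1/2}/y$. For $n$ with $P^+(n)\le \min(y,x^{1/2})$, take $M=(\log x)^C$. Then $(\log x)^C<m\le (\log x)^C x^{1/2}$, and $\ell\ge (\log x)^C$ unless $n\le x^{1/2}(\log x)^{2C}$. That exceptional range contributes trivially $\ll Qx^{1/2}(\log x)^{2C}$, which is acceptable once $B$ is large. The $n$ with $P^+(n)>x^{1/2}$ are exactly $n=pm$ with $m<x^{1/2}$ and no coupling other than $pm\le t$. They are handled by the large sieve when $m\ge(\log x)^C$ and by classical Bombieri--Vinogradov otherwise.
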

	\begin{proof}
		This lemma can be found in [30] and [12, Theorem 6]. In [30], Wolke proved a theorem of Bombieri-Vinogradov type for sifted numbers without small prime factors and announced that the dual case for friable integers can be treated analogously (see [30, p.146]).
	\end{proof}
	
	\begin{lemma}
		Let $b\in\mathbb{Z}^*$ and $A$, $\varepsilon>0$. Then there exists $C=C(A,\varepsilon)>0$ such that, in the range $x>2$, $(\log x)^C\leq y\leq x^{1/C}$, one has
		\begin{align*}
			\sum_{\substack{q\leq x^{66/107-\varepsilon}\\(q,b)=1}}\Big|\sum_{\substack{n\leq x,P^+(n)\leq y\\n\equiv b\mkern-15mu\pmod{q}}}1-\frac{1}{\varphi(q)}\sum_{\substack{n\leq x,P^+(n)\leq y\\(n,q)=1}}1\Big|\ll_{b,A,\varepsilon}\frac{\Psi(x,y)}{(\log x)^A}.
		\end{align*}
	\end{lemma}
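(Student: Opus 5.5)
This level of distribution $66/107$ for friable integers in a fixed residue class is the Drappeau level. The paper will cite it, but a proof would follow Drappeau's route: reduce to bilinear and trilinear sums and apply Deshouillers--Iwaniec bounds for sums of Kloosterman sums. I would argue as follows.

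\emph{Step 1: factorization.} Friable integers have flexible factorizations. For a parameter $M$, every $n\le x$ with $P^+(n)\le y$ and $n>M$ can be written uniquely as $n=mr$. Here $m$ is the smallest prefix of the prime factorization (primes in increasing order) with $m>M$, so $M<m\le My$ and $P^+(r)\le P^-(m)$-type conditions hold. This gives a convolution $\sum_{m}\sum_{r}\alpha_m\beta_r$ with $m$ in a dyadic range of our choosing. The coupling condition between $m$ and $r$ is removed by a Perron-type separation or by splitting $P^+$ into short ranges, at a cost of $(\log x)^{O(1)}$. Iterating gives a trilinear form $\alpha*\beta*\gamma$ with all three factor sizes adjustable within a factor $y\le x^{1/C}$.

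\emph{Step 2: moduli.} Write $q\le x^{66/107-\varepsilon}$ as $q=q_1q_2$, $q_1\sim Q_1$, $q_2\sim Q_2$. This is possible up to small losses, since if $q$ has no such factorization one uses the friable/smooth part of $q$ or a trivial bound on a sparse set. The absolute values are handled by inserting coefficients $c_q$ of modulus at most 1. The Dirichlet expansion in residue classes, plus Poisson summation in the smooth-ish variable, turns the discrepancy into incomplete Kloosterman sums of the form
\[
\sum_{q_1,q_2}c_{q_1q_2}\sum_{m,r}\alpha_m\beta_r\sum_{h}e\Bigl(\frac{hb\,\overline{mq_2}}{rq_1}\Bigr).
\]
The zero frequency gives the main term $\Psi(x,y)/\varphi(q)$, after the standard Siegel--Walfisz input for friables in the small-modulus range, via the Bombieri--Vinogradov-type Lemma 2.2.

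\emph{Step 3: the estimates.} Apply Cauchy--Schwarz and the Deshouillers--Iwaniec bound for bilinear/trilinear sums of Kloosterman sums in Drappeau's formulation. This gives a bound whose exponents depend on $(M,R,Q_1,Q_2)$. Optimizing the free choice of factor sizes allowed by friability produces the exponent $66/107$. The hardest step is this optimization, together with ensuring that the available factorization flexibility (granularity $y$, which is why we need $y\le x^{1/C}$) matches the optimal parameter choice. The lower bound $y\ge(\log x)^C$ ensures that $\Psi(x,y)$ is large enough to absorb the $x^{-\eta}$ power savings and the $(\log x)^{O(1)}$ losses.

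Since this is exactly Drappeau's theorem on friable integers in arithmetic progressions beyond $x^{1/2}$, in the paper I would simply cite it, with a fixed residue $b$ and level $x^{66/107-\varepsilon}$, rather than reproduce the argument.
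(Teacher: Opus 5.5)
\textbf{This is a genuine gap: the exponent $66/107$ is not Drappeau's, and the route you sketch does not reach it.} The paper proves Lemma 2.3 simply by citing Pascadi [19, Theorem 1.1]. Drappeau's theorem is exactly this statement with $x^{3/5-\varepsilon}$ in place of $x^{66/107-\varepsilon}$. That $3/5$ is what your route delivers: flexible factorization of the friable variable into three factors, the dispersion method, and Deshouillers--Iwaniec bounds in the form Drappeau used. So your Step 3 claim that optimizing the factor sizes ``produces the exponent $66/107$'' is unsupported, and citing Drappeau for level $66/107$ would be a false citation. The missing ingredient is Pascadi's new triple convolution estimate [19, Theorem 4.2], which this paper quotes as Lemma 3.4. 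Its admissible ranges for $(M,N,L,R)$ depend explicitly on the exceptional-eigenvalue parameter $\theta$, and it is applied with the Kim--Sarnak value $\theta=7/32$. Feeding this estimate, rather than the older triple convolution bounds, into Drappeau's combinatorial framework is what pushes the level past $3/5$ to $66/107$.

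Two further steps would fail as written. First, in Step 2 you cannot factor the moduli. Here $q$ is arbitrary and the absolute value sits inside the $q$-sum. Any $q\sim Q$ with a prime factor exceeding $\max(2Q_1,2Q_2)$ has no factorization $q=q_1q_2$ with $q_i\sim Q_i$. Once $Q_1,Q_2\ge Q^{\kappa}$ for a fixed $\kappa>0$, such $q$ form a positive proportion of all $q\sim Q$, so they are not a sparse exceptional set. In Drappeau's and Pascadi's arguments the moduli are never factored; note that $r\sim R$ is unrestricted in Lemma 3.4. All the flexibility comes from $n$, and it is the fixed residue $b$, together with that flexibility, that lets the dispersion method handle general $q$.

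Second, for the main term, Lemma 2.2 only gives an error $x/(\log x)^A$. This exceeds the target $\Psi(x,y)/(\log x)^A$ by a power of $x$ when $y$ is a power of $\log x$: for $y=(\log x)^C$ one has $\Psi(x,y)=x^{1-1/C+o(1)}$. What is needed is a Bombieri--Vinogradov theorem with error relative to $\Psi(x,y)$, of the type proved by Harper and used by Drappeau. It must be combined with a separate treatment of characters of small conductor, which is exactly what the $\omega_\varepsilon$ term in Lemma 3.4 isolates. The hypothesis $y\ge(\log x)^C$ enters through these inputs, not merely to absorb power savings.
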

	\begin{proof}
		This lemma is [19, Theorem 1.1]. We can also see the works of Pascadi in [20].
	\end{proof}
	
	\begin{lemma}
		Let $D\geq 2$ and $L>1$. Let $\mathscr{P}$ denote a set of primes. Let $z\geq 2$ and write $P(z):=\prod_{p\leq z,p\in\mathscr{P}}p$. There exist two sequences $\{\lambda_d^\pm\}_{d=1}^{\infty}$ of real numbers, vanishing for $d>D$ or $\mu(d)=0$, satisfying $\lambda_1^\pm=1$, $|\lambda_d^{\pm}|\leq 1$, $\lambda^-*1\leq \mu*1\leq \lambda^+*1$, and such that
		\begin{align*}
			&\sum_{d|P(z)}\lambda_d^+\frac{\omega(d)}{d}\leq \prod_{\substack{p\leq z\\p\in \mathscr{P}}}\left(1-\frac{\omega(p)}{p}\right)\left(F(s)+O\left(\frac{e^{L^{1/2}-s}}{(\log D)^{1/3}}\right)\right),\\&\sum_{d|P(z)}\lambda_d^-\frac{\omega(d)}{d}\geq \prod_{\substack{p\leq z\\p\in \mathscr{P}}}\left(1-\frac{\omega(p)}{p}\right)\left(f(s)+O\left(\frac{e^{L^{1/2}-s}}{(\log D)^{1/3}}\right)\right)
		\end{align*}
		uniformly for all multiplicative function $\omega$ satisfying
		\begin{align*}
			&(i)\ 0<\omega(p)<p\ (p\in\mathscr{P}),\\&(ii)\ \prod_{\substack{u<p\leq v\\p\in\mathscr{P}}}\left(1-\frac{\omega(p)}{p}\right)^{-1}\leq \frac{\log v}{\log u}\left(1+\frac{L}{\log u}\right)\ (2\leq u\leq v\leq z),
		\end{align*}
		where $s=\log D/\log z$ and $f(s)$, $F(s)$ are determined by the differential-difference equations
		\begin{equation}
			\begin{aligned}
				&\left\{ \begin{aligned}
					&sF(s)=2e^{\gamma}, \quad&& 1\leq s\leq 2;\\&sf(s)=0,\quad&& 0\leq s\leq 2;\\&(sF(s))'=f(s-1),(sf(s))'=F(s-1),\quad&&s\geq 2.
				\end{aligned}\right.\nonumber
			\end{aligned}
		\end{equation}
	\end{lemma}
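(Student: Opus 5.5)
This statement is the standard Rosser--Iwaniec linear (dimension one) beta-sieve fundamental lemma in its quantitative form, so the plan is to construct the weights $\lambda_d^\pm$ by Rosser's truncation and then evaluate the main terms by the differential-difference recursion.

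\emph{Construction of the weights.} Order the primes $p\mid P(z)$ as $p_1>p_2>\cdots$. Define $\lambda_d^+=\mu(d)$ for squarefree $d=p_1\cdots p_r$ with $p_1>\cdots>p_r$ satisfying
\[
p_1\cdots p_{m-1}p_m^{3}<D \quad\text{for all odd } m\le r,
\]
and $\lambda_d^+=0$ otherwise. Define $\lambda_d^-$ the same way, with the condition imposed for even $m$. Then $\lambda_d^\pm$ vanish unless $d\le D$ and $d$ is squarefree, and clearly $\lambda_1^\pm=1$ and $|\lambda_d^\pm|\le 1$. The inequalities $\lambda^-*1\le\mu*1\le\lambda^+*1$ follow from Buchstab's identity. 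Indeed, $(\lambda^+*1)(n)-(\mu*1)(n)$ is a sum of nonnegative terms, indexed by the first ``odd'' truncation point of the chain of prime divisors of $n$; similarly $(\mu*1)(n)-(\lambda^-*1)(n)$ is a sum of nonnegative terms indexed by the ``even'' truncation points.

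\emph{Evaluation of the main terms.} Write $V(z)=\prod_{p\le z,\,p\in\mathscr P}(1-\omega(p)/p)$. Iterating Buchstab's identity gives
\[
\sum_{d}\lambda_d^{+}\frac{\omega(d)}{d}=V(z)+\sum_{n \text{ odd}} V_n
\]
(with the analogous identity for $\lambda^-$ and $n$ even), where
\[
V_n=\sum_{\substack{p_1>\cdots>p_n\\ \text{first failure at } n}}\frac{\omega(p_1\cdots p_n)}{p_1\cdots p_n}\,V(p_n).
\]
Using hypothesis (ii) to compare $V(p)/V(z)$ with $\log z/\log p$ up to the factor $1+L/\log p$, each $V_n$ is approximated by an $n$-fold integral against the measure $dt/t$ on $\log p_i/\log D$. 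Summing these integrals over $n$ reproduces $F(s)-1$ and $1-f(s)$ respectively. This is the classical verification that the functions $f,F$ solve the stated system: $sF(s)=2e^\gamma$ on $[1,2]$, $f=0$ on $[0,2]$, and the adjoint delay equations beyond.

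\emph{Error terms.} This is the main obstacle: controlling the error uniformly in $\omega$, and obtaining the factor $e^{L^{1/2}-s}(\log D)^{-1/3}$. I would follow Iwaniec's proof of the fundamental lemma (Friedlander--Iwaniec, \emph{Opera de Cribro}, Theorem 11.13 and its corollaries) as follows.
\begin{itemize}
\item[(a)] Bound $V_n$ crudely by $V(z)\,e^{-(n-1)}\cdot(\text{const})$, using the rapid decay forced by $p_1\cdots p_m^3<D$. This makes the sum over $n\gg\log\log D$ negligible.
\item[(b)] For the remaining $n$, replace the sums over primes by integrals. Each replacement costs $O(L/\log p_n)$; these costs are cut off at $p_n\ge\exp((\log D)^{2/3})$. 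Smaller primes contribute $O(e^{-s})$ times a power saving, handled by (a).
\item[(c)] Optimize the cutoffs, which yields the $(\log D)^{-1/3}$ factor together with the $e^{L^{1/2}}$ dependence.
\end{itemize}

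Since all of this is standard and the statement matches that reference, in the paper itself I would simply cite the result rather than reproduce the analysis.
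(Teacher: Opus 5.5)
Your route is the paper's: its proof of Lemma 2.4 is just the citation of the Rosser--Iwaniec sieve [14]. Your outline is the standard content behind that citation: Rosser's $\beta=2$ truncation, the identity $\sum_d\lambda^{\pm}_d\omega(d)/d=V(z)\pm\sum V_n$ over odd/even $n$, and Iwaniec's error analysis in the \emph{Opera de Cribro} form with error $e^{\sqrt{L}-s}(\log D)^{-1/3}$. One small precision: since (ii) is one-sided, you obtain upper bounds for the $V_n$, not asymptotics. That is exactly what both inequalities require.

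What you omit, like the statement itself, is the range of $s$. The standard result you cite gives the upper bound for $s\ge1$ and the lower bound only for $s\ge2$. Your step ``summing the even $V_n$ reproduces $1-f(s)$'' is therefore not available for $1\le s<2$. Near $s=1$ the lower bound is in fact false for every admissible choice of weights, not just the $\beta$-sieve ones. To see this, note that for $s\ge1$ every prime $q\le z$ satisfies $q\le D$. Then $(\lambda^-*1)(q)\le 0$ together with $|\lambda^-_q|\le 1$ forces $\lambda^-_q=-1$. Consequently $(\lambda^-*1)(q_1q_2)=-1$ whenever $q_1q_2>D$, and $(\lambda^-*1)(q_1q_2q_3)=-2$ whenever all three pairwise products exceed $D$. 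Now take $\mathscr{P}$ to be all primes, $\omega\equiv1$ (so (ii) holds with an absolute $L$), and $z=D$. Write $\sum_{d\mid P(z)}\lambda^-_d\omega(d)/d=V(z)\sum_{n\mid P(z)}(\lambda^-*1)(n)\prod_{p\mid n}(p-1)^{-1}$. In this sum the $n=1$ term gives $V(z)$ and all other terms are $\le 0$. The two families above alone contribute about $-(\zeta(2)/2+0.30)\,V(z)$, so the main term is at most about $-0.12\,V(z)$. The lemma, however, claims it is $\ge V(z)\bigl(f(1)+o(1)\bigr)=o(V(z))$. So the lower bound must carry the hypothesis $s\ge2$, and the whole lemma needs $s\ge1$, which is where $F$ is defined and where $\lambda^-$ can be supported on $d\le D$. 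This is harmless for the paper, which only uses $\lambda^+$ with $D=z^2$.
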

	\begin{proof}
		This lemma is the Rosser-Iwaniec sieve [14].
	\end{proof}  
	\begin{lemma}
		For any given positive constant $A>0$, there exists a constant $B=B(A)>0$ such that the estimate\begin{align*}
			\sum_{q\leq x^{1/2}/(\log x)^B}\max_{y\leq x}\max_{(a,q)=1}\Big|\sum_{\substack{L_1<l\leq L_2\\(l,q)=1}}f(l)\Big(\sum_{\substack{lp\leq y\\lp\equiv a\mkern-15mu\pmod{q}}}1-\frac{\mathrm{li}(y/l)}{\varphi(q)}\Big)\Big|\ll\frac{x}{(\log x)^A}
		\end{align*}
		and
		\begin{align*}
			\sum_{q\leq x^{1/2}/(\log x)^B}\max_{y\leq x}\max_{(a,q)=1}\Big|\sum_{\substack{L_1<l\leq L_2\\(l,q)=1}}f(l)\Big(\sum_{\substack{p\leq y/L_2\\lp\equiv a\mkern-15mu\pmod{q}}}1-\frac{\mathrm{li}(y/L_2)}{\varphi(q)}\Big)\Big|\ll\frac{x}{(\log x)^A}
		\end{align*}
		hold for $(\log y)^{2B}<L_1\leq L_2<x^{1-\varepsilon}$, where $|f(l)|\leq 1$ and the constant implied by the symbol ``$\ll$" depends only on $\varepsilon$ and $A$.
	\end{lemma}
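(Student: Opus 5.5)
This is a Bombieri--Vinogradov theorem for the convolution $f*\mathbf{1}_{\mathbb P}$, restricted to $l\in(L_1,L_2]$. I would derive it from the general large-sieve form of Bombieri--Vinogradov for bilinear forms (Motohashi; Bombieri--Friedlander--Iwaniec, or the version in Friedlander--Iwaniec's \emph{Opera de Cribro}).

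\textbf{Reduction to characters.} Since $(l,q)=1$ and $(a,q)=1$, the condition $lp\equiv a\pmod q$ forces $(p,q)=1$ apart from $O(\omega(q))$ primes $p\mid q$, and their total contribution is negligible. Expand the congruence with characters:
\[
\frac{1}{\varphi(q)}\sum_{\chi\bmod q}\bar\chi(a)\sum_{l}f(l)\chi(l)\sum_{p}\chi(p).
\]
The principal character produces the main term $\mathrm{li}(y/l)/\varphi(q)$ (respectively $\mathrm{li}(y/L_2)/\varphi(q)$), up to an error controlled by the prime number theorem with de la Vallée Poussin error term. This error sums acceptably because $L_1\le L_2<x^{1-\varepsilon}$, so every $p$-range has length at least $x^{\varepsilon}$. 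Next reduce each character to the primitive character inducing it, and split $q$ dyadically.

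\textbf{Bilinear estimate.} The first sum has the form $\sum_{lp\le y}\alpha_l\beta_p\,\chi(lp)$ with $\alpha=f\mathbf 1_{(L_1,L_2]}$ and $\beta=\mathbf 1_{\mathbb P}$. To remove the hyperbolic constraint $lp\le y$, use Perron's formula (a truncated integral with the $\max_y$ handled by the standard $\log x$ loss), or cut into short dyadic boxes.

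Two regimes then arise:
\begin{itemize}
\item When $L_1$ and $y/L_2$ both exceed a power of $\log x$ (say $(\log x)^{2B}$ and more), the large sieve inequality
\[
\sum_{q\le Q}\frac{q}{\varphi(q)}\sideset{}{^*}\sum_\chi\Bigl|\sum_m a_m\chi(m)\Bigr|^2\le (Q^2+M)\|a\|^2,
\]
combined with Cauchy--Schwarz, gives the bound $(Q+\sqrt{L}+\sqrt{y/L}+\sqrt{y}/Q')\sqrt{y}(\log x)^{c}$ after removing small moduli $q\le Q'=(\log x)^{B'}$.
\item For small moduli $q\le(\log x)^{B'}$, the Siegel--Walfisz theorem applied to the prime variable $p$ (with $l$ fixed) gives savings of any power of $\log$. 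This requires $y/l$ large, which again follows from $l<x^{1-\varepsilon}$. For $y$ small relative to $x$, the bound is trivial.
\end{itemize}
The condition $L_1>(\log y)^{2B}$ ensures the $\sqrt{y/L_1}\sqrt y$ term saves $(\log x)^{B}$, and the $Q\sqrt y$ term is fine for $Q=x^{1/2}/(\log x)^B$.

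The second sum, with $p\le y/L_2$ and no hyperbola, separates directly into a product of a character sum over $l$ and one over $p$. The same argument applies, and is simpler.

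\textbf{Main obstacle.} The main difficulty is the Siegel--Walfisz/small-modulus step, together with uniformity in $\max_y$ and in the length of the prime range when $L_2$ is close to $x^{1-\varepsilon}$. There the prime variable ranges over only about $x^{\varepsilon}$, which is still enough for Siegel--Walfisz savings, and the implied constants acquire their dependence on $\varepsilon$. Since all of this is standard (it is essentially Lemma 2 of Wang's / Lü--Wang's earlier papers, going back to Fouvry's argument), I would present the proof as a citation with this sketch.
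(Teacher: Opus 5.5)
Your proposal is correct and follows the paper's route: the paper proves this lemma only by citing L\"u--Wang [17, Lemma 2.3], which rests on the Pan--Ding--Wang mean value theorem [18, Theorem 2] (not on Fouvry), and your bilinear large-sieve plus Siegel--Walfisz sketch is the standard argument behind that theorem. One step in your sketch deserves to be written out: when you pass to primitive characters, the condition $(l,q)=1$ makes the coefficients depend on $q$, and the usual fix is to write $q=rs$ with $r$ the conductor and apply the large sieve over $r$ for each fixed $s$, at the cost of one extra factor of $\log x$.
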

	\begin{proof}
		This is [17, Lemma 2.3] (see [18, Theorem 2]). 
	\end{proof}
	
	Define
	\begin{align*}
		\pi(x;l,a,q)=\sum_{\substack{lp\leq x\\lp\equiv a\mkern-15mu\pmod{q}}}1.
	\end{align*}
	
	\begin{lemma}
		Let $a\neq 0$ be a given integer, and let $A>0$ and $\varepsilon>0$. For any well factorable
		function $\lambda(q)$ of level $Q$, the following estimates
		\begin{align*}
			\sum_{\substack{q\leq Q\\(a,q)=1}}\lambda(q)\sum_{\substack{L_1\leq l\leq L_2\\(l,q)=1}}\left(\pi(x;l,a,q)-\frac{\pi(x/l)}{\varphi(q)}\right)\ll\frac{x}{(\log x)^A}
		\end{align*}
		and \begin{align*}
			\sum_{\substack{q\leq Q\\(a,q)=1}}\lambda(q)\sum_{\substack{L_1\leq l\leq L_2\\(l,q)=1,2|l}}\left(\pi(x;l,a,q)-\frac{\pi(x/l)}{\varphi(q)}\right)\ll\frac{x}{(\log x)^A}
		\end{align*}
		hold for $Q\leq x^{4/7-\varepsilon}$ and $1\leq L_1\leq L_2\leq x^{1-\varepsilon}$, where the implied constants depend only on $a$, $A$ and $\varepsilon$, $\varphi(n)$ is the Euler totient function.
	\end{lemma}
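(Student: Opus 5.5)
This is a Bombieri–Vinogradov-type estimate beyond the $x^{1/2}$ barrier for well-factorable weights, applied to numbers of the form $lp$. I would deduce it from the Bombieri–Friedlander–Iwaniec theorem in the form proved by Fouvry / Bombieri–Friedlander–Iwaniec for convolutions. The setting is a bilinear sequence $\alpha * \beta$ with one factor smooth (here the prime variable $p$, handled via Heath-Brown's identity or Vaughan's identity) and a well-factorable $\lambda$ of level $Q\le x^{4/7-\varepsilon}$.

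The plan is to write
\begin{align*}
\sum_{\substack{L_1\le l\le L_2\\(l,q)=1}}\pi(x;l,a,q)=\sum_{\substack{n\le x\\ n\equiv a \,(\mathrm{mod}\, q)}} (\beta*\mathbf 1_{\mathbb P})(n),
\end{align*}
with $\beta=\mathbf 1_{[L_1,L_2]}$ (or $\beta=\mathbf 1_{[L_1,L_2]}\mathbf 1_{2\mid l}$ for the second estimate). Note that $(l,q)=1$ is automatic from $(a,q)=1$. I would then split $l$ into dyadic ranges $l\sim L$ and treat two cases.

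\emph{Case $L\le x^{\varepsilon/2}$.} The coefficient $\beta$ is supported on small numbers, so for each fixed $l$ one needs
\begin{align*}
\sum_{q}\lambda(q)\Big(\pi(x/l;\bar l a,q)-\pi(x/l)/\varphi(q)\Big)\ll x/l(\log x)^{-A}.
\end{align*}
Here $Q\le (x/l)^{4/7-\varepsilon/2}$, and this is exactly the Bombieri–Friedlander–Iwaniec theorem for primes with well-factorable weights at level $x^{4/7}$, applied with the fixed residue $\bar l a$. Its uniformity in the residue follows from the dispersion proof, since the residue enters only through Kloosterman-type sums; alternatively one can use Fouvry's variant. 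Summing over $l$ costs only $\log x$.

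\emph{Case $L> x^{\varepsilon/2}$.} I would apply Heath-Brown's identity to the prime indicator (weighted by $\Lambda$, then removed by partial summation). This writes $\beta*\Lambda$ as a combination of convolutions of $\beta$ with several smooth and Möbius-type factors. The task is then to group these factors into a bilinear form $\sum\sum \alpha_m\gamma_n$ with $m\sim M$, in which either one factor is smooth and long (a Type I/II sum treated by Fouvry's $x^{4/7}$ results) or $M$ lies in a range where BFI's Theorem 5* gives level $x^{4/7}$ for well-factorable $\lambda$. The available ranges are $x^{\varepsilon}\le N\le x^{\,3/7}$-type conditions, or equivalently $M$ in a window of width comparable to $x^{1/7}$ at level $4/7$.

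The main obstacle is the combinatorics when $L$ sits in an awkward range, since $l$ is an unrestricted integer variable rather than a smooth one. For $x^{\varepsilon}\le L\le x^{1-\varepsilon}$ I would combine $l$ with the Heath-Brown factors. When $L$ lies in a good bilinear window, treat $(l,\ \text{prime part})$ directly as the bilinear pair. Otherwise, either $x/L$ is small, in which case $p$ is short and one splits $l$ instead via its own factorisation as a convolution, using only that $\beta$ is an interval indicator and hence itself smooth ($\beta=\mathbf 1*\ldots$). Or $x/L$ is large, in which case Heath-Brown's identity on $p$ supplies pieces that can be grouped with $l$ to land in the window. The essential point is that $\beta$ is the indicator of an interval, so $l$ acts like a smooth variable, which makes the Type I estimates (Fouvry–Iwaniec, level up to $x^{4/7}$ with well-factorable weights) applicable. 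The parity restriction $2\mid l$ only replaces $l$ by $2l'$, a harmless change of residue and length.

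A Siegel–Walfisz input for $\alpha=\beta*\ldots$ is needed throughout. It is immediate for intervals and arithmetic progressions mod $2$, and holds for primes by Siegel–Walfisz. Collecting the dyadic pieces then gives the bound $x(\log x)^{-A}$.
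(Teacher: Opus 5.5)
The paper gives no argument for this lemma; it simply quotes it from Wang [28, Proposition 3.2] and Ding--Wang [7, Lemma 2.2]. Your overall plan is a reasonable way to reconstruct it: Heath-Brown's identity for the prime, plus Bombieri--Friedlander--Iwaniec-type Type I/II estimates for well-factorable weights, with $l$ treated as an extra smooth variable. However, your Case 1 contains a step that fails as written.

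For fixed $l$ you count primes $p\le x/l$ in the class $a\bar l \bmod q$ and apply BFI's level-$x^{4/7}$ theorem ``with the fixed residue $\bar l a$''. That class is not given by a fixed integer; that would require $l\mid a$. BFI's theorem, by contrast, is stated and proved for a fixed integer $a$, with an implied constant depending on $a$. Above $x^{1/2}$ the dispersion argument uses in an essential way that one and the same integer is the residue for every modulus. What you actually need is a version uniform over residues $a\bar l$ with $l\le x^{\varepsilon/2}$, with error $\ll (x/l)(\log x)^{-A}$ uniformly in $l$, so that the sum over $l$ costs only $\log x$. Saying this ``follows from the dispersion proof'' is an assertion, not an argument. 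The same problem arises in the second estimate, where you write $l=2l'$ and move $\bar 2$ into the residue.

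The repair is to never move $l$, or the factor $2$, into the residue. Keep the fixed class $a$ and the sequence $n=lp$, i.e. $(\beta*\mathbf 1_{\mathbb P})(n)$ with $\beta=\mathbf 1_{[L_1,L_2]}$ (respectively $\mathbf 1_{[L_1,L_2]}\mathbf 1_{2\mid\cdot}$). Then insert $l$ as one more factor into the Heath-Brown decomposition for every $L$. For $L\le x^{\varepsilon/2}$ this only perturbs the factor sizes by $x^{\varepsilon/2}$, so the case distinction disappears.

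Your Case 2 is also a description rather than a verification. The real content of the lemma is that every configuration (the Heath-Brown factors together with $l$) lands in a range covered by BFI's estimates at level $x^{4/7-\varepsilon}$. This does go through, because BFI's case analysis uses only the sizes of the factors and which of them are smooth, and $l$ is a smooth variable satisfying Siegel--Walfisz trivially. But it has to be said.

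Your first description of the bilinear range, $x^{\varepsilon}\le N\le x^{3/7}$, overstates what is available above $x^{1/2}$. The range starts at a positive power of $x$, and at level $x^{4/7}$ it is only a window of width about $x^{1/7}$, as your second description says.
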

	\begin{proof}
		This lemma can be found in [28, Proposition 3.2] and [7, Lemma 2.2].
	\end{proof}
	
	\section{The supplementary results and sketch of the proof}
	Let \(\varepsilon>0\), \(A>0\), \(1/2\le\eta\le1-\varepsilon\), and define the set
	\(\mathcal{Q}(a,\eta):=\{q: x^\eta < q \le 2x^\eta,\ \gcd(q,a)=1\}\). We consider the following question:
	What function \(C(\eta)\) satisfies the inequality\begin{align*}
		\Psi(x,x^{1/2}; q, a)=\sum_{\substack{n\leq x,P^+(n)\leq x^{1/2}\\n\equiv a\mkern-15mu\pmod{q}}}1\geq (C(\eta)-\varepsilon)\frac{x}{q}\tag{3.1}
	\end{align*}
	for almost all \(q\in\mathcal{Q}(a,\eta)\), while the number of exceptions is less than \(x^\eta (\log x)^{-A}\)?
	\begin{remark}
		Why is  \(\cdot/q\) used in formula (3.1) instead of \(\cdot/\varphi(q)\)? Note that\begin{align*}
			\frac{1}{\varphi(q)}\sum_{\substack{n<z\\(n,q)=1}}1=\frac{z}{q}+O\left(\frac{\tau(q)}{\varphi(q)}\right).
		\end{align*}
	\end{remark}
	
	\begin{theorem}
		For all sufficiently small $\varepsilon>0$, there exists $\delta>0$ such that the following holds. For $1/2<\eta<66/107-\varepsilon$ and $a\ll x^{\delta}$, statement (3.1) holds for some effective \(C(\eta)>0\). In particular, for $0.5<\eta<0.55$, we have\begin{align*}
			C(\eta)\geq & \int\int_{(u_1,u_2)\in\mathscr{E}_1}\frac{1}{u_1u_2}\mathrm{d}u_1\mathrm{d}u_2+\int\int\int_{(u_0,u_1,u_2)\in \mathscr{E}_2}\frac{1}{u_0u_1u_2}\mathrm{d}{u_0}\mathrm{d}{u_1}\mathrm{d}{u_2},
		\end{align*}
		where $\theta=7/32$,
		\begin{align*}
			\mathscr{E}_1=&\{(u_0,u_1):1-\eta>u_1>u_2,u_1+2u_2>1,8u_1-u_2>5-4\eta,\\&(7-2\theta)u_1+(3-4\theta)u_2>5-2\theta-2\eta\}
		\end{align*}
		and
		\begin{align*}
			\mathscr{E}_2=&\{(u_0,u_1,u_2):1-\eta>u_0+u_1>u_2>u_0>u_1>0,8(u_0+u_1)-u_2>5-4\eta,\\&(7-2\theta)(u_0+u_1)+(3-4\theta)u_2>5-2\theta-2\eta, 2u_0+u_2<1,u_0+2u_1+u_2>1\}.
		\end{align*}
	\end{theorem}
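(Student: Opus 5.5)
Since the moduli $q\in\mathcal{Q}(a,\eta)$ lie beyond the Bombieri--Vinogradov range $x^{1/2}$, Lemma 2.2 cannot be applied directly. Instead I will construct a \emph{subset} of the $x^{1/2}$-friable $n\le x$ with $n\equiv a\pmod q$, made of integers with a factored shape, and count these. For such shapes the congruence condition can be transferred to a smaller modulus, or handled by dispersion/Kloosterman bounds; this is where $\theta=7/32$, the exponent in bounds for sums of Kloosterman sums (Kim--Sarnak), enters. I will take $n=p_1p_2m$ in the two-prime case and $n=p_0p_1p_2m$ in the three-prime case, with $p_i=x^{u_i}$ and the $u_i$ ranging over $\mathscr{E}_1$ or $\mathscr{E}_2$. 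The remaining factor $m$ will be essentially unrestricted, apart from $P^+(m)\le x^{1/2}$. The density of such $n$ among integers in a residue class is $\int\!\int du_1du_2/(u_1u_2)$ (respectively the triple integral), by Mertens. Since distinct tuples give distinct $n$ once the orderings $u_1>u_2$ and $u_2>u_0>u_1$ are imposed, the two contributions add to give the stated lower bound for $C(\eta)$. The conditions $u_1<1-\eta$ (respectively $u_0+u_1<1-\eta$) and $u_1+2u_2>1$ ensure every prime factor is at most $x^{1/2}$ and that the complementary variable has the right size.

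The key step is an equidistribution statement for each fixed configuration:
\[
\sum_{\substack{p_1p_2m\le x\\ p_1p_2m\equiv a \,(q)}}1=\frac{1}{\varphi(q)}\sum_{\substack{p_1p_2m\le x\\ (p_1p_2m,q)=1}}1+\text{error},
\]
valid for all but $x^\eta(\log x)^{-A}$ moduli $q\sim x^\eta$. Because only almost all $q$ are required, I will bound the mean square of the error over $q\in\mathcal{Q}(a,\eta)$. Expanding the square gives the dispersion method of Linnik--Bombieri--Friedlander--Iwaniec, with the smooth variable $m$ treated by Poisson summation. After Poisson, the incomplete Kloosterman sums are estimated via Deshouillers--Iwaniec with the $\theta$-loss. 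Chebyshev's inequality then converts the mean-square bound into the count of exceptional $q$.

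The arithmetic of the variable sizes should reproduce the linear constraints. Grouping $p_1$ (or $p_0p_1$) as one smooth-length variable $N$ and $p_2$ as another variable $M$, the Deshouillers--Iwaniec bounds require $8\log N-\log M>(5-4\eta)\log x$ and $(7-2\theta)\log N+(3-4\theta)\log M>(5-2\theta-2\eta)\log x$, matching the constraints written with $u_1$ or $u_0+u_1$ in place of $\log N/\log x$. The conditions $2u_0+u_2<1$ and $u_0+2u_1+u_2>1$ in $\mathscr{E}_2$ should come from the choice of which primes to group together, so that the grouped factor is well factorable in the required range. For general $\eta<66/107-\varepsilon$, positivity of $C(\eta)$ instead follows from Lemma 2.3: restrict to $m$ friable up to $x^{1/C}$, and its level $x^{66/107}$ covers $q\sim x^\eta$. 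Summing the three-primes-times-friable contribution then yields a positive constant, since $\rho(C)>0$. The hypothesis $a\ll x^\delta$ is needed so that the shifts in the dispersion (and the dependence on $b$ in Lemma 2.3) stay uniform.

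The main obstacle is this dispersion/Kloosterman step. The work lies in verifying that the exponent conditions obtained from Deshouillers--Iwaniec with $\theta=7/32$ are exactly the inequalities defining $\mathscr{E}_1$ and $\mathscr{E}_2$, and that the non-smooth prime variable can be handled by Cauchy--Schwarz without losing the almost-all-$q$ saving. I would first follow the Fouvry--Radziwiłł/Drappeau large-moduli arguments for the type $(N,M)$ bilinear forms and read off the ranges. The final step, computing the Mertens densities and the $x/q$ normalisation via Remark 3.1, is routine.
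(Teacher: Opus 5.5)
\paragraph{Verdict: there is a genuine gap.} Your decomposition matches the paper's: it counts the same integers $n=p_1p_2\ell$ and $n=(p_0p_1)\,p_2\,\ell$ with $\ell$ unrestricted, and groups the variables the same way. But the equidistribution step, which is the entire content of the theorem, is left unproved. The inequalities defining $\mathscr{E}_1$ and $\mathscr{E}_2$ are read off the statement, and you only assert that a dispersion/Deshouillers--Iwaniec argument ``should'' reproduce them.

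\paragraph{The proposed mechanism.} As described, it cannot work throughout $\mathscr{E}_1$. Poisson summation in the free variable gives no saving when $\ell\asymp x^{1-u_1-u_2}$ is as short as $x^{2\eta-1+o(1)}$, which happens near the corner $u_1=u_2\to1-\eta$. That length is far below the modulus $x^\eta$.

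\paragraph{What the paper does.} It applies Lemma 3.4 (Pascadi's triple-convolution estimate). This has absolute values over $r\sim R$ and arbitrary $1$-bounded $\alpha_m,\beta_n,\gamma_\ell$, so no variable needs to be long or smooth. Take $\alpha,\beta$ to be the prime indicators on $m\sim x^{u_1}$, $n\sim x^{u_2}$, take $\gamma\equiv 1$ on $\ell\sim x^{u_3}$ with $u_3=1-u_1-u_2$, and take $R=x^\eta$. Under $u_1>u_2>u_3$ the hypotheses become exactly:
\begin{itemize}
\item $R\ll NL\Leftrightarrow u_1<1-\eta$;
\item $L<N\Leftrightarrow u_1+2u_2>1$;
\item $N^9L^8\ll x^3R^4\Leftrightarrow 8u_1-u_2>5-4\eta$;
\item $N^4L^7(N/L)^{2\theta}\ll x^2R^2\Leftrightarrow (7-2\theta)u_1+(3-4\theta)u_2>5-2\theta-2\eta$.
\end{itemize}
The remaining hypotheses follow from these. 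The resulting $\ell^1$ bound over moduli gives the exceptional set by Markov's inequality, so no mean square is needed.

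\paragraph{A missing step.} You also omit something any such large-moduli estimate forces. The main term in Lemma 3.4 carries the twist $\omega_\varepsilon$ by characters of conductor in $(1,x^{\varepsilon}]$. The paper removes it by M\"obius inversion of the condition $(\ell,r)=1$, followed by P\'olya--Vinogradov (Lemma 3.5) on the unrestricted $\ell$. That is where the freedom of $\ell$ is actually used, not in a Poisson step.

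\paragraph{Smaller errors.}
\begin{itemize}
\item In $\mathscr{E}_2$, the condition $u_0+2u_1+u_2>1$ just says $\ell<p_1'$. This makes the three primes the three largest prime factors and gives $N>L$.
\item The condition $2u_0+u_2<1$ has nothing to do with well-factorability. It says the cofactor $p_1\ell$ exceeds $p_0$, which makes the $\mathscr{E}_2$-integers disjoint from the $\mathscr{E}_1$-integers. Without it, $p_2\cdot p_0\cdot(p_1\ell)$ would be counted twice. So your claim that the orderings alone make the two contributions add is false.
\item Lemma 2.3 is not needed for positivity, and as stated it is only for fixed $b$, not uniform in $a\ll x^{\delta}$. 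Positivity comes from $\mathscr{E}_1$ directly. At $u_1=u_2\to1-\eta$ the last constraint reads $(10-6\theta)(1-\eta)>5-2\theta-2\eta$, i.e.\ $\eta<(5-4\theta)/(8-6\theta)=66/107$. So $\mathscr{E}_1$ has positive measure exactly on the stated range, which is where the threshold comes from.
\end{itemize}
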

	\begin{remark}
		For any $y\in(x^{\varepsilon},x]$ and $P^+(n)<y$, the similar result holds. This gives many effective applications. For example, this can improve the results in [16] and also obtain effective constants.
	\end{remark}

	Before starting the proof, we need two lemmas. The first lemma is [19, Theorem 4.2].
	\setcounter{lemma}{3}
	\begin{lemma}
		For all sufficiently small $\varepsilon>0$, there exists $\delta>0$ such that the following holds. Let $a_1$, $a_2$ be coprime nonzero integers, and let $M$, $N$, $L$, $R$, $x>2$ satisfy\begin{align*}
			a_1,a_2\ll x^{\delta},\ MNL\asymp x,\ x^{(1-\varepsilon)/2}\ll R\ll x^{-5\varepsilon}NL\ll x^{2/3-11\varepsilon},\ N^9L^8\ll x^3R^4,
		\end{align*}
		\begin{align*}
			N\ll\frac{x^{1-2\varepsilon}}{R},\ N^4L^7\max(1,N/L)^{2\theta}\ll x^{2-17\varepsilon}R^2,\ N^{12-6\theta}L^{11-6\theta}\ll x^{6-4\theta-5\varepsilon}R^2,
		\end{align*}
		for $\theta=7/32$. Then for any $1$-bounded complex sequences $(\alpha_m)$, $(\beta_n)$, $(\gamma_{\ell})$, one has\begin{align*}
			\sum_{\substack{r\sim R\\(r,a_1a_2)=1}}\Bigg|\sum_{m\sim M}\sum_{n\sim N}\sum_{\ell\sim L}\alpha_m\beta_n\gamma_{\ell}\Bigg(1_{mn\ell\equiv a_1\overline{a_2}\mkern-15mu\pmod{r}}-\frac{1_{(mn\ell,r)=1}}{\varphi(r)}\omega_{\varepsilon}(mn\ell\overline{a_1}a_2;r)\Bigg)\Bigg|\ll\frac{x}{(\log x)^A},
		\end{align*}
		where $m\sim M$ denotes $M<m\leq 2M$, the constant implied in $\ll$ is depended on $\varepsilon$ and $A$, \begin{align*}
			\omega_\varepsilon(k;r):=\sum_{\substack{\chi\ \mathrm{primitive}\\\mathrm{cond}(\chi)|r\\1<\mathrm{cond}(\chi)\leq x^{\varepsilon}}}\chi(k).
		\end{align*}
	\end{lemma}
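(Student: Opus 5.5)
This is [19, Theorem 4.2], and in the paper we would simply quote it. If I had to prove it, I would use Linnik's dispersion method in the form developed by Bombieri, Friedlander and Iwaniec and refined by Maynard. The plan is as follows.

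\textbf{Setting up the dispersion.} First insert coefficients $c_r$ with $|c_r|\le 1$ to remove the absolute values. Write the sum as $\sum_r c_r\sum_m\alpha_m\sum_{n,\ell}\beta_n\gamma_\ell(\cdots)$. Apply Cauchy--Schwarz in $(r,m)$, keeping $m$ inside with a smooth majorant of $m\sim M$. This reduces matters to showing that the dispersion
\begin{align*}
\sum_{r\sim R}\sum_{m}\psi(m/M)\Big|\sum_{n\sim N}\sum_{\ell\sim L}\beta_n\gamma_\ell\Big(1_{mn\ell\equiv a_1\overline{a_2}\pmod r}-\frac{1_{(mn\ell,r)=1}}{\varphi(r)}\omega_\varepsilon(mn\ell\overline{a_1}a_2;r)\Big)\Big|^2
\end{align*}
is $\ll x^2/(R(\log x)^{2A})$. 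Expanding the square gives three terms. The two cross terms involving the expected main term $\omega_\varepsilon/\varphi(r)$ are evaluated directly: one uses orthogonality of characters, and the characters of conductor $\le x^\varepsilon$ are exactly what $\omega_\varepsilon$ removes. The remaining character contribution, from conductors greater than $x^\varepsilon$, is bounded by the multiplicative large sieve together with a Siegel--Walfisz-type hypothesis. The condition $R\gg x^{(1-\varepsilon)/2}$ together with $N\ll x^{1-2\varepsilon}/R$ is what keeps that contribution under control.

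\textbf{The diagonal term.} The genuinely new term is the ``diagonal'' one, in which $m$ is summed over $mn_1\ell_1\equiv mn_2\ell_2\equiv a_1\overline{a_2}\pmod r$. Here I would apply Poisson summation in $m$ modulo $r$.
\begin{itemize}
\item The zero frequency produces a main term that cancels against the cross terms, up to errors again controlled by the small-conductor characters.
\item The nonzero frequencies $h\ll rx^{\varepsilon}/M$ produce exponential sums of the shape $e\big(ha_1\overline{a_2 n_1\ell_1}/r\big)$. The terms with $n_1\ell_1=n_2\ell_2$ are handled trivially, which requires $NL\gg x^{5\varepsilon}R$.
\item For the off-diagonal terms I would split into the structures $(n\ell)$ versus $(n)(\ell)$. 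I would bound the resulting sums of incomplete Kloosterman fractions by the Deshouillers--Iwaniec estimates for bilinear and trilinear forms in Kloosterman sums. These rest on the Kuznetsov formula and the Kim--Sarnak bound $\theta=7/32$ toward Ramanujan--Selberg, which is the source of the factors $\max(1,N/L)^{2\theta}$ and the $\theta$-dependent exponents.
\end{itemize}
The conditions $N^4L^7\max(1,N/L)^{2\theta}\ll x^{2-17\varepsilon}R^2$ and $N^{12-6\theta}L^{11-6\theta}\ll x^{6-4\theta-5\varepsilon}R^2$ should come out of optimising these spectral bounds in two regimes. The condition $N^9L^8\ll x^3R^4$ should come from an alternative treatment via Weil-type bounds for incomplete Kloosterman sums combined with a further Cauchy--Schwarz.

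\textbf{Main obstacle.} I expect the Kloosterman step to be the hardest part. Two things have to be done there: first, reduce the exponential sums to a form where the moduli factor appropriately, so that one can exploit the averaging over $r$ even though $\alpha,\beta,\gamma$ are arbitrary; second, track the exceptional-eigenvalue contribution uniformly. This is precisely the content of [19], and in the paper I would cite it rather than reprove it.
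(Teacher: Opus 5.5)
Your proposal matches the paper: Lemma 3.4 is not proved there but quoted directly as [19, Theorem 4.2] (Pascadi), exactly as you do, so your dispersion/Kloosterman outline is supplementary rather than needed. One correction to that outline: because $(\alpha_m),(\beta_n),(\gamma_\ell)$ are arbitrary, no Siegel--Walfisz input is available. That is precisely why the characters of conductor $\le x^{\varepsilon}$ are subtracted via $\omega_\varepsilon$, and the larger conductors are then controlled by the large sieve alone, which is one place the hypothesis $R\ll x^{-5\varepsilon}NL$ is used.
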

	
	The following lemma is the Pólya-Vinogradov inequality with an explicit dependence on the conductor.
	\begin{lemma}
		Let $\chi\mkern-7mu\pmod{q}$ be an Dirichlet character of conductor $r\neq 1$ with $r|q$. Let $M$, $N\geq 1$. We have\begin{align*}
			\sum_{M<n\leq M+N}\chi(n)\ll\tau(q/r)\sqrt{r}\log r.
		\end{align*}
	\end{lemma}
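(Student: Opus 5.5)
The plan is to reduce to the primitive character that induces $\chi$, and then apply the classical Pólya--Vinogradov bound for primitive characters. That bound costs $\sqrt r\log r$ per incomplete sum. The passage from modulus $q$ to conductor $r$ is handled by a Möbius inversion over the prime divisors of $q$ that do not divide $r$, and this accounts for the factor $\tau(q/r)$.

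\emph{Step 1 (primitive case).} Let $\chi^*$ be the primitive character modulo $r$ inducing $\chi$. For $r>1$ the Gauss sum $\tau(\overline{\chi^*})$ has absolute value $\sqrt r$, and
\begin{align*}
\chi^*(n)=\frac{1}{\tau(\overline{\chi^*})}\sum_{b\bmod r}\overline{\chi^*}(b)\,e\!\left(\frac{bn}{r}\right)
\end{align*}
holds for every integer $n$. This identity uses primitivity. Inserting it into an interval sum, the term $b\equiv0$ vanishes. Each remaining inner geometric sum over $n$ is bounded by $\ll\|b/r\|^{-1}$. Summing over $b\not\equiv0\pmod r$ gives $\ll r\log r$. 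Dividing by $\sqrt r$, every interval $I$ satisfies
\begin{align*}
\sum_{n\in I}\chi^*(n)\ll\sqrt r\log r .
\end{align*}

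\emph{Step 2 (removing the imprimitivity).} We have $\chi(n)=\chi^*(n)\mathbf 1_{(n,q)=1}$. Since $\chi^*(n)=0$ already when $(n,r)>1$, it suffices to impose coprimality with $q':=\prod_{p\mid q,\,p\nmid r}p$. Writing $\mathbf 1_{(n,q')=1}=\sum_{d\mid (n,q')}\mu(d)$ and using complete multiplicativity of $\chi^*$, we obtain
\begin{align*}
\sum_{M<n\le M+N}\chi(n)=\sum_{d\mid q'}\mu(d)\chi^*(d)\sum_{M/d<m\le (M+N)/d}\chi^*(m).
\end{align*}
Each inner sum is over an interval, so Step 1 bounds it by $\ll\sqrt r\log r$. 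The total is therefore $\ll 2^{\omega(q')}\sqrt r\log r$.

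\emph{Step 3 (counting divisors).} Every prime $p\mid q'$ divides $q$ but not $r$, so $p\mid q/r$. Hence $2^{\omega(q')}\le 2^{\omega(q/r)}\le\tau(q/r)$, which gives the claimed bound.

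There is no serious obstacle here. The only point needing care is Step 1's Gauss-sum expansion, which requires primitivity of $\chi^*$ to be valid for all $n$, including $(n,r)>1$. That is precisely why we pass to $\chi^*$ before applying the classical argument.
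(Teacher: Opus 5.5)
Your argument is correct. The Gauss-sum expansion of the inducing primitive character $\chi^*$ bounds every interval sum by $\sqrt r\log r$, and the M\"obius inversion over the primes dividing $q$ but not $r$ costs only $2^{\omega(q')}\le\tau(q/r)$, which is slightly sharper than the stated bound. The paper gives no proof of this lemma and simply quotes it as the classical P\'olya--Vinogradov inequality, so your write-up supplies exactly the standard argument it relies on.
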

	\setcounter{subsection}{0}
	\subsection{Proof of Theorem 3.2}
	For $\eta\in(1/2,66/107-\varepsilon)$, let $R=x^{\eta}$. We write $M=x^{u_1}$, $N=x^{u_2}$, $L=x^{u_3}$. Let $\theta=7/32$. The conditions in Lemma 3.4 become\begin{align*}
		u_1+u_2+u_3=1, u_2+u_3>\eta,\ 9u_2+8u_3< 3+4\eta,\ u_2+\eta< 1,\ 4u_2+7u_3<2+2\eta,
	\end{align*}
	\begin{align*}
		(4+2\theta)u_2+(7-2\theta)u_3< 2+2\eta,\ (12-6\theta)u_2+(11-6\theta)u_3<6-4\theta+2\eta,
	\end{align*}
	where we omit some $\varepsilon$.
	Supposed $u_1>u_2>u_3>0$, the all conditions become
	\begin{align*}
		1-\eta>u_1>u_2,\ u_1+2u_2>1, \ 8u_1-u_2>5-4\eta,\ 
	\end{align*}
	\begin{align*}
		(7-2\theta)u_1+(3-4\theta)u_2>5-2\theta-2\eta.
	\end{align*}
	Now suppose that all the above conditions are satisfied.
	Let $m=p_1$, $n=p_2$ be two primes. For the $\delta$ in Lemma 3.4 and $a\ll x^{\delta}$, we have\begin{align*}
		&\sum_{\substack{r\sim R\\(r,a)=1}}\Bigg|\sum_{p_1\sim M}\sum_{p_2\sim N}\sum_{\ell\sim L}\Bigg(1_{p_1p_2\ell\equiv a\mkern-15mu\pmod{r}}-\frac{1_{(p_1p_2\ell,r)=1}}{\varphi(r)}\Bigg)\Bigg|\\&\ll\sum_{\substack{r\sim R\\(r,a)=1}}\Bigg|\sum_{p_1\sim M}\sum_{p_2\sim N}\sum_{\ell\sim L}\Bigg(1_{p_1p_2\ell\equiv a\mkern-15mu\pmod{r}}-\frac{1_{(p_1p_2\ell,r)=1}}{\varphi(r)}\omega_\varepsilon(p_1p_2\ell\overline{a};r)\Bigg)\Bigg|\\&\quad+\sum_{\substack{r\sim R\\(r,a)=1}}\sum_{\substack{\chi\ \mathrm{primitive}\\\mathrm{cond}(\chi)|r\\1<\mathrm{cond}(\chi)\leq x^{\varepsilon}}}\frac{1}{\varphi(r)}\Bigg|\sum_{p_1\sim M}\sum_{p_2\sim N}\sum_{\substack{\ell\sim L\\(p_1p_2\ell,r)=1}}\chi(p_1p_2\ell)\Bigg|
	\end{align*}
	We apply Lemma 3.4 to bound the first summation. For the second summation,
	we apply M\"obius inversion to remove the condition $(l,r)=1$ and get\begin{align*}
		&\sum_{\substack{r\sim R\\(r,a)=1}}\sum_{\substack{\chi\ \mathrm{primitive}\\\mathrm{cond}(\chi)|r\\1<\mathrm{cond}(\chi)\leq x^{\varepsilon}}}\frac{1}{\varphi(r)}\Bigg|\sum_{p_1\sim M}\sum_{p_2\sim N}\sum_{\substack{\ell\sim L\\(p_1p_2\ell,r)=1}}\chi(p_1p_2\ell)\Bigg|\\&\ll\sum_{\substack{r\sim R\\(r,a)=1}}\sum_{\sigma|r}\sum_{\substack{\chi\ \mathrm{primitive}\\\mathrm{cond}(\chi)|r\\1<\mathrm{cond}(\chi)\leq x^{\varepsilon}}}\frac{1}{\varphi(r)}\sum_{p_1\sim M}\sum_{p_2\sim N}\Bigg|\sum_{\substack{\ell\sim L/\sigma}}\chi(\ell)\Bigg|
	\end{align*}
	The contribution of $\sigma>x^{3\varepsilon}$ is negligible by trivial bound. Now we consider $\sigma<x^{3\varepsilon}$. Applying Lemma 3.5, we get\begin{align*}
		\sum_{\substack{\ell\sim L/\sigma}}\chi(\ell)\ll x^{\varepsilon}.
	\end{align*}
	Thus we can get\begin{align*}
		\sum_{\substack{r\sim R\\(r,a)=1}}\sum_{\substack{\chi\ \mathrm{primitive}\\\mathrm{cond}(\chi)|r\\1<\mathrm{cond}(\chi)\leq x^{\varepsilon}}}\frac{1}{\varphi(r)}\Bigg|\sum_{p_1\sim M}\sum_{p_2\sim N}\sum_{\substack{\ell\sim L\\(p_1p_2\ell,r)=1}}\chi(p_1p_2\ell)\Bigg|\ll\frac{x}{(\log x)^A},
	\end{align*}
	then\begin{align*}
		\sum_{\substack{r\sim R\\(r,a)=1}}\Bigg|\sum_{p_1\sim M}\sum_{p_2\sim N}\sum_{\ell\sim L}\Bigg(1_{p_1p_2\ell\equiv a\mkern-15mu\pmod{r}}-\frac{1_{(p_1p_2\ell,r)=1}}{\varphi(r)}\Bigg)\Bigg|\ll\frac{x}{(\log x)^A}.
	\end{align*}
	We only need to consider the  number of the type $p_1p_2\ell$. Obviously, the number of the type $p_1p_2\ell$ is\begin{align*}
		(1-O(\varepsilon))x\int\int_{(u_1,u_2)\in\mathscr{E}_1}\frac{1}{u_1u_2}\mathrm{d}u_1\mathrm{d}u_2,
	\end{align*}
	where\begin{align*}
		\mathscr{E}_1=&\{(u_1,u_2):       1-\eta >u_1>u_2, u_1+2u_2>1,8u_1-u_2>5-4\eta, \\&(7-2\theta)u_1+(3-4\theta)u_2>5-2\theta-2\eta\}.
	\end{align*}
	Thus we obtain an effective $C(\eta)>0$ for $1/2<\eta<66/107-\varepsilon$. Taking $m=p_1p_1'$ to be an almost-prime with $p_1=x^{u_0}$ and $p_1'=x^{u_1}$ ($n=p_2=x^{u_2}$ and $\ell=x^{u_3}$), the argument is analogous. 
	Let $u_2>u_0>u_1>u_3$ and $u_2+2u_0<1$ (compared with $u_1+2u_2>1$ above) to avoid repetition.
	Omitting the details, for $0.5<\eta<0.55$, we can obtain the $C(\eta)$ in Theorem 3.2. We complete the proof.
	\subsection{Sketch of Theorem 1.4 and our improvements}
	In this paper, we mainly focus on the estimation of $\#\{n<x:P^+(n)<P^+(n+1)\leq x^{1-c}\}$. (We also obtain a better result for the case $P^+(n)\geq P^+(n+1)> x^{1-c}$.) We optimize the calculation (giving an improvement of approximately 0.04), and advance the idea of L\"u and Wang [17] in several respects.
	
	For the case $P^+(n)<x^{1/2}/(\log x)^B$, $p_1|n+1$ with $\max(x^{\delta_1},P^+(n))<p_1<x^{1/2}/(\log x)^B$, in [17, (3.3)] (also see [28, p.367]), L\"u and Wang used a sieve weight\begin{align*}
		\frac{\log z}{\log x}\omega(n;y,z):=\frac{\log z}{\log x}\sum_{\substack{z<p\leq y\\p|n}}1\leq 1.
	\end{align*}
	However, when \(n+1\) has only one prime divisor $p_1\in(x^{\delta_1},x^{1/2}/(\log x)^B)$, this weight leads to many losses. From our observations, in \S4.1 and \S4.3, we abandon this weight, and when $n+1$ has two large prime divisors, we apply the generalized Bombieri-Vinogradov theorem and Rosser-Iwaniec sieve to obtain an upper bound. (giving an improvement of approximately 0.015)
	
	For the case $P^+(n+1)>x^{1/2}$, L\"u and Wang showed their lower bounds for sums of the form \begin{align*}
		&\sum_{x^{1-\eta_1}<p<x^{1-\eta_{2}}}\sum_{dp\leq x}1-\sum_{x^{1-\eta_1}<p'<x}\sum_{x^{1-\eta_1}<p<x^{1-\eta_{2}}}\sum_{\substack{dp\leq x\\dp-1\equiv0\mkern-15mu\pmod{p'}}}1\\&\geq\Bigg(\frac{1-2\eta_1 }{2(1-\eta_1)}\log\frac{1-\eta_2}{1-\eta_1}+o(1)\Bigg)x+\frac{S_B}{\log x},
	\end{align*}
	where \begin{align*}
		S_B=&\frac{1}{1-\eta_1}\sum_{x^{1/2}/(\log x)^B<p'\leq x^{1-\eta_1}}\log p'\sum_{x^{1-\eta_1}<p\leq x^{1-\eta_2}}\sum_{\substack{dp\leq x\\dp-1\equiv 0\mkern-15mu\pmod{p'}}}1\\&+\sum_{x^{1-\eta_1}<p'<x}\left(\frac{1}{1-\eta_1}\log p'-\log x\right)\sum_{x^{1-\eta_1}<p\leq x^{1-\eta_2}}\sum_{\substack{dp\leq x\\dp-1\equiv 0\mkern-15mu\pmod{p'}}}1.
	\end{align*}
	For the sum $S_B$ (as an error term), they use a trivial result $S_B\geq 0$. Following their works, in \S4.2, an important improvement in our argument is that we introduce a new parameter $t_1\in(0,\eta_1)$ (depending on $\eta_1$) to improve the main term (giving an improvement of approximately 0.02, see (4.8) and (4.9), also see (4.19), (6.2)), and $S_B$ becomes\begin{align*}
		S_B&=\frac{1}{1-\eta_1+t_1}\sum_{x^{1/2}/(\log x)^B<p'<x^{1-\eta_1}}\log p'\sum_{x^{1-\eta_1}<p<x^{1-\eta_2}}\sum_{\substack{dp\leq x\\dp-1\equiv0\mkern-15mu\pmod{p'}}}1\\&\quad+\sum_{x^{1-\eta_1}<p'<x}\left(\frac{1}{1-\eta_1+t_1}\log p'-\log x\right)\sum_{x^{1-\eta_1}<p<x^{1-\eta_2}}\sum_{\substack{dp\leq x\\dp-1\equiv0\mkern-15mu\pmod{p'}}}1.
	\end{align*}
	Our goal is to find the largest $t_1$ such that $S_B\geq0$. We observe that for $0<t_2<\eta_1-t_1$, one has
	\begin{align*}
		S_B&\geq\frac{1}{2}\left(\frac{1}{1-\eta_1+t_1}-\frac{1}{1-\eta_1+t_1+t_2}\right)\log \frac{1-\eta_2}{1-\eta_1}x\log x+o(x\log x)\\&\quad-\sum_{x^{1-\eta_1}<p'<x^{1-\eta_1+t_1+t_2}}\left(\log x-\frac{1}{1-\eta_1+t_1+t_2}\log p'\right)\sum_{x^{1-\eta_1}<p<x^{1-\eta_2}}\sum_{\substack{dp\leq x\\dp-1\equiv0\mkern-15mu\pmod{p'}}}1.
	\end{align*}
	Then we apply Rosser-Iwaniec sieve to obtain an upper bound of the final summation (see (4.10)-(4.14)), and give the conditions on $t_1$. (The parameter $t_1$ also plays an important role in many other problems. For example, we can see the $\delta=t_1/(c+t_1)$ in Theorem 1.6. For $c=1/2$, the $\delta$ is larger than $0.12$ (better value by Remark 6.2), and the expected value of $\delta$ is \(\approx0.3\).)
	
	However, for $\eta_1\approx 1/2$, the lower bound of the main term above is not a good result since $1-2\eta_1\approx0$. Note that for $\eta_1\in(0,1/2)$, one has\begin{align*}
		&\sum_{x^{1-\eta_1}<p<x^{1-\eta_{2}}}\sum_{dp\leq x}1-\sum_{x^{1-\eta_1}<p'<x}\sum_{x^{1-\eta_1}<p<x^{1-\eta_{2}}}\sum_{\substack{dp\leq x\\dp-1\equiv0\mkern-15mu\pmod{p'}}}1\\&\geq \sum_{x^{1-\eta_1}<p<x^{1-\eta_{2}}}\sum_{\substack{n\leq x-1, P^+(n)\leq x^{1/2}\\p|n+1}}1.
	\end{align*}
	Thus we directly consider the lower bound of $\Psi(x,x^{1/2};p,-1)=\#\{n<x: P^+(n)<x^{1/2},p|n+1\}$ and apply Theorem 3.2. (giving an improvement of approximately $0.003$)

	\section{Proof of Theorem 1.4}
	
	We start from the following expression
	\begin{align*}
		\sum_{\substack{n<x\\P^+(n)<P^+(n+1)}}1&=\sum_{\substack{n<x\\P^+(n+1)>x^{1-c}}}1-\sum_{\substack{n<x\\P^+(n)\geq P^+(n+1)>x^{1-c}}}1+\sum_{\substack{n<x\\P^+(n)<P^+(n+1)\leq x^{1-c}}}1\\&=\mathscr{S}_A-\mathscr{S}_B+\mathscr{S}_C,\tag{4.1}
	\end{align*}
	where $0<c\leq 1/2$ is a parameter. To estimate $\mathscr{S}_A$ and $\mathscr{S}_B$, we have the following results.
	\begin{lemma}
		For $0<c<2/7-\varepsilon$, we have
		\begin{align*}
			&\mathscr{S}_A=x\log\frac{1}{1-c}+o(x),\\&\mathscr{S}_B\leq x\left(2\int_0^c\log\left(\frac{1-t}{1-c}\right)\frac{\mathrm{d}t}{4/7-t}+o(1)\right).
		\end{align*}
	\end{lemma}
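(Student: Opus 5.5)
For $\mathscr{S}_A$ the plan is a direct count. Since $c\le 1/2<1$, we have $x^{1-c}\ge x^{1/2}$, so any integer $m\le x$ has at most one prime factor exceeding $x^{1-c}$. Hence
\begin{align*}
\mathscr{S}_A=\sum_{x^{1-c}<p\le x}\Big(\Big\lfloor\frac{x+1}{p}\Big\rfloor+O(1)\Big)=x\sum_{x^{1-c}<p\le x}\frac1p+O(\pi(x)).
\end{align*}
Mertens' theorem then gives $\sum_{x^{1-c}<p\le x}1/p=\log\frac{1}{1-c}+o(1)$, which is the claimed formula.

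For $\mathscr{S}_B$, I would take the configuration where $n+1$ has its largest prime $p=P^+(n+1)\in(x^{1-c},x]$ and $P^+(n)\ge p$. The same argument as above shows that $P^+(n)=p'$ is then the unique prime of $n$ larger than $x^{1-c}$, with $p'\ge p$ (the case $p'=p$ is impossible). Write $p=x^{1-t}$ with $0<t<c$ and $n+1=dp$ with $d<x^{t}$. Then
\begin{align*}
\mathscr{S}_B\le\sum_{x^{1-c}<p\le x}\#\{d\le x/p:\ dp-1=mp',\ p'\text{ prime},\ p'>p\}.
\end{align*}
The condition is that $(dp-1)/m$ is a prime exceeding $p$, so $m<x/p=x^{t}$. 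I would bound this by an upper-bound sieve. Fix $p$ and $m$, and count $d\le x/p$ with $dp\equiv1\pmod m$ such that $(dp-1)/m$ is prime. Equivalently, swap roles: count $n=mp'\le x$ with $n\equiv-1\pmod p$, where the modulus $p$ is a prime of size $x^{1-t}>x^{1/2}$. That is too large for Bombieri–Vinogradov. Instead, sieve the variable $p'$ in the arithmetic progression modulo $p$, averaging over $p$ with the modulus in the dual role: count $d$ such that $dp-1\equiv0\pmod m$ and $(dp-1)/m$ has no prime factor below $z$.

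A cleaner route is to use Lemma 2.6 with the Rosser–Iwaniec upper sieve (Lemma 2.4), in the standard Chen-type form. Set $N=dp-1$ and sieve $\{lp-1:\ l\sim L\}$, i.e. numbers $n+1=lp$ with $p$ prime and $l\le x^{t}$. We want $n=lp-1$ to have a prime factor $\ge p$, so $n/P^+(n)\le x/p=x^{t}$. By symmetry, write $n=kq$ with $q$ prime and $k\le x^{t}$. Then we count pairs $(k,q)$ with $kq+1\equiv0\pmod{p}$, and the linear sieve applied to $kq+1$ to detect the primality of $(kq+1)/l$ has level $Q\le x^{4/7-\varepsilon}$ via Lemma 2.6 with well-factorable weights. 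The upper-bound linear sieve gives, for each fixed $l$, the factor $F(s)\cdot\prod(1-1/p)\approx \frac{2e^\gamma}{s}\cdot\frac{e^{-\gamma}}{\log z}$ with $s=\log Q/\log z$, $Q=x^{4/7}$. Since $z^{s}=Q$ and the integer to be prime has size $x^{1-t}$, this yields a factor $\frac{2}{(4/7-t)\log x}$ in place of the "true" density $\frac{1}{(1-t)\log x}$. This explains the denominator $4/7-t$; the requirement $c<2/7$ guarantees $s\ge 2$, i.e. $z=Q^{1/2}\ge x^{t}$, so the sieve dimension-one formula $F(s)=2e^\gamma/s$ is valid.

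Finally, summing over $p$ and $l$, the number of $l\le x^{t}$ with $l$ further constrained so that the other prime exceeds $p$ contributes $\log\frac{1-t}{1-c}$ after partial summation. Integrating over $p=x^{1-t}$ with density $dt/(1-t)$, combined with the sieve factor, should produce exactly $2\int_0^c\log\frac{1-t}{1-c}\frac{dt}{4/7-t}$.

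The main obstacle is arranging the roles of the variables so that the modulus in the equidistribution statement is below $x^{4/7}$, and verifying that the error terms from Lemma 2.6 sum to $o(x)$ uniformly over $l\le x^{t}$. The step I would try first is to take the modulus as the sieve variable $q\mid (n+1)/p$-complement, with $L_1,L_2$ ranges of size $x^{t}$, and then check that the constraint $t<c<2/7$ keeps $L_2\le x^{1-\varepsilon}$ and $s\ge2$.
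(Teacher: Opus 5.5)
Your count for $\mathscr{S}_A$ is fine. For $\mathscr{S}_B$ you name the right tools: write $n+1=dp$, $n=mp'$, use the upper-bound Rosser--Iwaniec sieve, and use Lemma 2.6 at level $x^{4/7-\varepsilon}$. But the mechanism behind the denominator $4/7-t$ and the hypothesis $c<2/7$ is misidentified.

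Lemma 2.6 controls the remainder only on average over moduli. So you cannot sieve ``for each fixed $l$'' and then add up the errors over $\asymp x^{t}$ cofactors. In the paper the cofactor $m=n/p'\sim\mathcal M$ is made part of the modulus $md'$, where $d'$ is the sieve divisor. One then needs $(1_{(\mathcal M,2\mathcal M]}f_1)*\lambda^+(\cdot,h)$ to be well factorable of level $x^{4/7-\varepsilon}$. This is possible only when $\mathcal M$ fits into the larger factor of every splitting $Q_1Q_2=x^{4/7-\varepsilon}$, i.e.\ $\mathcal M<x^{2/7-\varepsilon}$; since $m<x^{c}$, this is exactly where $c<2/7$ is used. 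It also forces the sieve level $D=x^{4/7-\varepsilon}/\mathcal M$, whence $F(2)\prod_{p\le z}(1-1/p)\approx 2/\log D=2/((4/7-u)\log x)$ for $m=x^{u}$.

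Your justification does not give this. With level $Q=x^{4/7}$ your own formula yields $2/((4/7)\log x)$. The size $x^{1-t}$ of the sieved integer plays no role in that factor. And $sF(s)=2e^{\gamma}$ holds throughout $1\le s\le 3$, so nothing needs $s\ge 2$ or $z\ge x^{t}$. You also skip the local factors. The main term involves $1/\varphi(md')$ and the conditions $(d,md')=1$, $2\mid dm$. Only after averaging $H(d)$ over $d$ (La Bret\`eche--Pomerance--Tenenbaum) does it collapse, for $p$ in a block $(x^{1-\eta_i},x^{1-\eta_{i+1}})$, to $(1+o(1))\frac{2x}{m\log D}\log\frac{1-\eta_{i+1}}{1-\eta_i}$.

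Second, the two primes do not play symmetric roles, and your closing bookkeeping does not produce the stated constant. In the paper $p=P^+(n+1)$ remains the genuine prime of Lemma 2.6, so its block is imposed for free, and $p'=(dp-1)/m$ is the sieved variable. The condition $p'\ge p$ is relaxed to $p'>x^{1-\eta_i}$, i.e.\ $m<x^{\eta_i}$. The block then contributes $\bigl(2\int_0^{\eta_i}\frac{du}{4/7-u}\bigr)\log\frac{1-\eta_{i+1}}{1-\eta_i}\,x$, and refining the partition gives $2\int_0^c\frac{d\eta}{1-\eta}\int_0^{\eta}\frac{du}{4/7-u}$. After Fubini, $\log\frac{1-u}{1-c}$ is the integral over $p$, not a count of cofactors.

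Suppose instead you swap roles ``by symmetry'', keeping $q=P^+(n)$ prime and sieving $(kq+1)/l$. Then $p>x^{1-c}$ and $q>p$ confine the modulus $l$ to $(x^{\eta},x^{c})$ when $q\approx x^{1-\eta}$. So the larger cofactor goes into the modulus, where the sieve loss $2(1-u)/(4/7-u)$ is bigger. The same computation then gives only $2\int_0^c\log\frac{1}{1-t}\frac{dt}{4/7-t}$, the weaker constant quoted in Remark 4.2.

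Your closing description attaches $\log\frac{1-t}{1-c}$ to the number of $l\le x^{t}$. It then also integrates over $p=x^{1-t}$ with density $dt/(1-t)$. That uses the $p$-variable twice and would lead to a different integral.
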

	\setcounter{theorem}{1}
	\begin{remark}
		Let $c=0.1348$. The upper bound of $\mathscr{S}_B$ becomes $\approx0.0382x$. Wang [28] obtained\begin{align*}
			\mathscr{S}_B\leq \left(2\int_0^c\log\left(\frac{1}{1-t}\right)\frac{\mathrm{d}t}{4/7-t}+o(1)\right)x\approx 0.0398x.
		\end{align*}
		After correcting Wang’s proof, we should choose the sieve weight $\lambda^+$ to have a larger level when applying sieve methods in most cases. (This is not the key point of this paper.)
	\end{remark}
	\begin{proof}
		The estimation of $\mathscr{S}_A$ is trivial. For $\mathscr{S}_B$, we write $dp=n+1$, and we have\begin{align*}
			\mathscr{S}_B=& \sum_{\substack{n<x\\P^+(n)\geq P^+(n+1)>x^{1-c}}}1\leq \sum_{i=1}^N\sum_{x^{1-\eta_i}<p'<x}\sum_{x^{1-\eta_i}<p<x^{1-\eta_{i+1}}}\sum_{\substack{dp<x\\dp-1\equiv 0\mkern-15mu\pmod{p'}}}1+o(x),
		\end{align*}
		where $2/7-\varepsilon>  c=\eta_1>\eta_2>\cdots>\eta_N>\eta_{N+1}=0$. For each $i$, applying Rosser-Iwaniec sieve (see (4.10)-(4.14)), we claim that\begin{align*}
			\sum_{x^{1-\eta_i}<p'<x}\sum_{x^{1-\eta_i}<p<x^{1-\eta_{i+1}}}\sum_{\substack{dp<x\\dp-1\equiv 0\mkern-15mu\pmod{p'}}}1\leq \left(2\int_{0}^{\eta_i}\frac{1}{4/7-t}\mathrm{d}t\log\frac{1-\eta_{i+1}}{1-\eta_i}+o(1)\right)x.
		\end{align*}
		Thus we have\begin{align*}
			\mathscr{S}_B\leq \left(2\sum_{i=1}^N\int_{0}^{\eta_i}\frac{1}{4/7-t}\mathrm{d}t\log\frac{1-\eta_{i+1}}{1-\eta_i}+o(1)\right)x.
		\end{align*}
		For any $\nu>0$, there exists $\Delta=\Delta(\nu)>0$ such that if $\max_i|\eta_{i+1}-\eta_i|<\Delta$, one has\begin{align*}
			\mathscr{S}_B&\leq\left(2\int_0^c\int_{0}^{\eta}\frac{1}{(4/7-t)(1-\eta)}\mathrm{d}t\mathrm{d}\eta+\nu+o(1)\right)x\\&=\left(2\int_0^c\log\left(\frac{1-t}{1-c}\right)\frac{\mathrm{d}t}{4/7-t}+\nu+o(1)\right)x.
		\end{align*}
		We complete the proof.
	\end{proof}
	
	Now we begin to estimate $\mathscr{S}_C$. In this paper, we present a new lower bound for $\mathscr{S}_C$. Define
	\begin{align*}
		P(y,z):=\prod_{y<p\leq z}p
	\end{align*}
	and \begin{align*}
		S(x,y):=\{n\leq x:P^+(n)\leq y\},\quad S^+(x;y,z):=\{n\leq x:y<P^+(n)\leq z\}.
	\end{align*}
	It is not difficult to prove that for $c\in(0,2/7-\varepsilon)$ and $\delta_1\in(1/3,1/2)$ (for convenience, $c=0.1348$, $\delta_1=0.417$), one has
	\begin{align*}
		\mathscr{S}_C=&\sum_{\substack{n<x\\P^+(n)<P^+(n+1)\leq x^{1-c}}}1\\\geq&\sum_{\substack{n\in S(x,x^{\delta_1})\\(n+1,P(x^{\delta_1},x^{1/2}/(\log x)^B))>1}}1+\sum_{\substack{n\in S^+(x;x^{\delta_1},x^{1/2}/(\log x)^B)\\(n+1,P(P^+(n),x^{1/2}/(\log x)^B))>1}}1\\&+\sum_{\substack{n\in S(x,x^{1-\delta_1})\\\max(x^{1/2},P^+(n))<P^+(n+1)<x^{1-c}}}1+\sum_{\substack{n\in S^+(x;x^{1-\delta_1},x^{1-c})\\P^+(n)<P^+(n+1)<x^{1-c}}}1\\&-\sum_{\substack{n\in S(x,x^{1/2}/(\log x)^B)\\(n+1,P(x^{\delta_1},x^{1/2}/(\log x)^B))>1\\x^{1/2}<P^+(n+1)<x^{1-\delta_1}}}1\\=&\mathscr{S}'_1+\mathscr{S}'_2+\mathscr{S}'_3+\mathscr{S}'_4-\mathscr{S}'_5,\tag{4.2}
	\end{align*}
	where $\mathscr{S}_5$ contains mainly the terms that appear repeatedly in $\mathscr{S}'_1$, $\mathscr{S}'_2$ and $\mathscr{S}'_3$.
	\subsection{Estimation of $\mathscr{S}'_1$ and $\mathscr{S}'_2$}Firstly, for $\mathscr{S}'_1$, we classify integers $n+1$ based on whether \(n+1\) has one or two required prime divisors,
	and we obtain \begin{align*}
		\mathscr{S}'_1=&\sum_{\substack{n\in S(x,x^{\delta_1})\\(n+1,P(x^{\delta_1},x^{1/2}/(\log x)^B))>1}}1\\=&\sum_{x^{\delta_1}<p\leq x^{1/2}/(\log x)^B}\sum_{\substack{n\in S(x,x^{\delta_1})\\n\equiv-1\mkern-15mu\pmod{p}}}1-\frac{1}{2}\sum_{x^{\delta_1}<p_1,p_2\leq x^{1/2}/(\log x)^B}\sum_{\substack{n\in S(x,x^{\delta_1})\\p_1p_2|n+1}}1.
	\end{align*}
	We apply Lemma 2.1 and Lemma 2.2 to estimate the frist summation and obtain\begin{align*}
		\mathscr{S}'_1=&\sum_{x^{\delta_1}<p\leq x^{1/2}/(\log x)^B}\frac{1}{p-1}\Psi(x,x^{\delta_1})+o(x)-\frac{1}{2}\sum_{x^{\delta_1}<p_1,p_2\leq x^{1/2}/(\log x)^B}\sum_{\substack{n\in S(x,x^{\delta_1})\\p_1p_2|n+1}}1\\=&\Bigg(\rho\left(\frac{1}{\delta_1}\right)\log \frac{1}{2\delta_1}+o(1)\Bigg)x-\frac{1}{2}\sum_{x^{\delta_1}<p_1,p_2\leq x^{1/2}/(\log x)^B}\sum_{\substack{n\in S(x,x^{\delta_1})\\p_1p_2|n+1}}1.
	\end{align*}
	For $\mathscr{S}'_2$, we can get\begin{align*}
		\mathscr{S}'_2=&\sum_{\substack{n\in S^+(x;x^{\delta_1},x^{1/2}/(\log x)^B)\\(n+1,P(P^+(n),x^{1/2}/(\log x)^B))>1}}1\\\geq& \sum_{\substack{n\in S^+(x;x^{\delta_1},x^{1/2}/(\log x)^B)}}\sum_{\substack{p|n+1\\P^+(n)<p<x^{1/2}/(\log x)^B)}}1-\mathcal{R}_0\\&\\\geq& \sum_{i=1}^N\sum_{\substack{n\in S^+(x;x^{\delta_i},x^{\delta_{i+1}})}}\sum_{\substack{p|n+1\\x^{\delta_{i+1}}<p<x^{1/2}/(\log x)^B)}}1-\mathcal{R}_0,
	\end{align*}
	where $\delta_1<\delta_2<\cdots<\delta_{N+1}<1/2$,
	\begin{align*}
		\mathcal{R}_0=\frac{1}{2}\sum_{\substack{n\in S^+(x;x^{\delta_1},x^{1/2}/(\log x)^B)}}\sum_{\substack{P^+(n)<p_1,p_2\leq x^{1/2}/(\log x)^B\\p_1p_2|(n+1)}}1.
	\end{align*}
	Since\begin{align*}
		\sum_{\substack{n\in S^+(x;x^{\delta_i},x^{\delta_{i+1}})}}\sum_{\substack{p|n+1\\x^{\delta_{i+1}}<p<x^{1/2}/(\log x)^B)}}1=\sum_{\substack{x^{\delta_{i+1}}<p<x^{1/2}/(\log x)^B)}}\Bigg(\sum_{\substack{n\in S(x,x^{\delta_{i+1}})\\n\equiv -1\mkern-15mu\pmod{p}}}1-\sum_{\substack{n\in S(x,x^{\delta_{i}})\\n\equiv -1\mkern-15mu\pmod{p}}}1\Bigg),
	\end{align*}
	a similar argument to that of $\mathscr{S}_1$ allows us to deduce that\begin{align*}
		\mathscr{S}'_2\geq& \sum_{i=1}^{N}\left(\rho\left(\frac{1}{\delta_{i+1}}\right)-\rho\left(\frac{1}{\delta_{i}}\right)\right)\left(\log \frac{1}{2\delta_{i+1}}\right)x+o(x)\\&-\frac{1}{2}\sum_{\substack{x^{\delta_1}<p_1,p_2\leq x^{1/2}/(\log x)^B}}\sum_{\substack{n\in S^+(x;x^{\delta_1},x^{1/2}/(\log x)^B)\\p_1p_2|n+1}}1.
	\end{align*}
	Then there exists $\Delta>0$ such that if $\max(\max_{i}|\delta_{i+1}-\delta_i|,1/2-\delta_{N+1})<\Delta$, one has
	\begin{align*}
		\mathscr{S}'_{1}+\mathscr{S}'_{2}\geq& x\left(\rho\left(\frac{1}{\delta_1}\right)\log\frac{1}{2\delta_1}+\sum_{i=1}^{N}\left(\rho\left(\frac{1}{\delta_{i+1}}\right)-\rho\left(\frac{1}{\delta_{i}}\right)\right)\log\frac{1}{2\delta_{i}}+o(1)\right)\\&-\frac{1}{2}\sum_{x^{\delta_1}<p_1,p_2<x^{1/2}/(\log x)^B}\sum_{\substack{n\in S(x,x^{1/2}/(\log x)^B)\\p_1p_2|n+1}}1\\\geq &\left(\int_{\delta_1}^{1/2}\rho\left(\frac{1}{t}\right)\frac{1}{t}\mathrm{d}t-\nu+o(1)\right)x-\mathcal{R},\tag{4.3}
	\end{align*}
	where $\nu=0.000001$,\begin{align*}
		\mathcal{R}=\frac{1}{2}\sum_{x^{\delta_1}<p_1,p_2<x^{1/2}/(\log x)^B}\sum_{\substack{n\in S(x,x^{1/2}/(\log x)^B)\\p_1p_2|n+1}}1.
	\end{align*}
	We leave the estimation of $\mathcal{R}$ for \S4.3.
	\subsection{Estimation of $\mathscr{S}'_3$ and $\mathscr{S}'_4$}
	Noting that $P^+(n+1)>x^{1/2}$, so we can not use the theorem of Bombieri-Vinogradov type for friable integers as we have done for $\mathscr{S}'_1$ and $\mathscr{S}'_2$.
	
	Firstly, we partition the cases according to the largest prime divisors of $n$ and \(n+1\). We get
	\begin{align*}
		\mathscr{S}'_3+\mathscr{S}'_4=&\sum_{\substack{n\in S(x,x^{1-\delta_1})\\\max(x^{1/2},P^+(n))<P^+(n+1)<x^{1-c}}}1+\sum_{\substack{n\in S^+(x;x^{1-\delta_1},x^{1-c})\\P^+(n)<P^+(n+1)<x^{1-c}}}1\\=&\sum_{\substack{n\in S(x,x^{1-c})\\\max(x^{1/2},P^+(n))<P^+(n+1)<x^{1-c}}}1\\\geq &\sum_{\substack{n\in S(x,x^{1-\eta_1})\\x^{1-\eta_1}<P^+(n+1)<x^{1-c}}}1+\sum_{i=1}^{N-1}\sum_{\substack{n\in S^+(x;x^{1-\eta_i},x^{1-\eta_{i+1}})\\x^{1-\eta_{i+1}}<P^+(n+1)<x^{1-c}}}1\\=&\mathscr{S}_{C_1}'+\sum_{i=1}^{N-1}\mathscr{S}_{C_{i+1}}',
	\end{align*}
	where $1/2>\eta_1>\eta_2>\cdots>\eta_N>\eta_{N+1}=c$. Following L\"u and Wang's works,
	we have (see [17, p.412-413])
	\begin{align*}
		\mathscr{S}_{C_1}'+\mathscr{S}_{C_2}'=\widetilde{\mathscr{S}_{C_1}}+\widetilde{\mathscr{S}_{C_2}'}+o(x),
	\end{align*}
	where
	\begin{align*}
		\widetilde{\mathscr{S}_{C_1}}&=\sum_{x^{1-\eta_1}<p< x^{1-\eta_2}}\sum_{dp\leq x}1-\sum_{x^{1-\eta_1}<p'<x}\sum_{x^{1-\eta_1}<p< x^{1-\eta_2}}\sum_{\substack{dp\leq x\\dp-1\equiv0\mkern-15mu\pmod{p'}}}1,\\
		\widetilde{\mathscr{S}_{C_2}'}&=\sum_{x^{1-\eta_2}<p< x^{1-c}}\sum_{dp\leq x}1-\sum_{x^{1-\eta_2}<p'<x}\sum_{x^{1-\eta_2}<p< x^{1-c}}\sum_{\substack{dp\leq x\\dp-1\equiv0\mkern-15mu\pmod{p'}}}1\\&=\sum_{\substack{n\in S(x,x^{1-\eta_2})\\x^{1-\eta_2}<P^+(n+1)<x^{1-c}}}1+O(1).
	\end{align*}
	Repeating the above procedure, we have
	\begin{align*}
		\mathscr{S}'_3+\mathscr{S}'_4&\geq\mathscr{S}_{C_1}'+\sum_{i=1}^{N-1}\mathscr{S}_{C_{i+1}}'=\widetilde{\mathscr{S}_{C_1}}+\Big(\widetilde{\mathscr{S}_{C_2}'}+\mathscr{S}'_{C_{3}}\Big)+\sum_{i=3}^{N-1}\mathscr{S}_{C_{i+1}}'+o(x)\\&=\sum_{i=1}^2\widetilde{\mathscr{S}_{C_i}}+\Big(\widetilde{\mathscr{S}_{C_3}'}+\mathscr{S}_{C_{4}}'\Big)+\sum_{i=4}^{N-1}\mathscr{S}_{C_{i+1}}'+o(x)\\&\cdots\\&=\sum_{i=1}^{N}\widetilde{\mathscr{S}_{C_i}}+o(x),\tag{4.5}
	\end{align*}
	where $\widetilde{\mathscr{S}_{C_N}}=\widetilde{\mathscr{S}_{C_N}'}$,
	\begin{align*}
		&\widetilde{\mathscr{S}_{C_i}}=\sum_{x^{1-\eta_i}<p<x^{1-\eta_{i+1}}}\sum_{dp\leq x}1-\sum_{x^{1-\eta_i}<p'<x}\sum_{x^{1-\eta_i}<p<x^{1-\eta_{i+1}}}\sum_{\substack{dp\leq x\\dp-1\equiv0\mkern-15mu\pmod{p'}}}1,\\&\widetilde{\mathscr{S}_{C_{i}}'}=\sum_{x^{1-\eta_{i}}<p<x^{1-c}}\sum_{dp\leq x}1-\sum_{x^{1-\eta_{i}}<p'<x}\sum_{x^{1-\eta_{i}}<p<x^{1-c}}\sum_{\substack{dp\leq x\\dp-1\equiv0\mkern-15mu\pmod{p'}}}1,
	\end{align*}
	for $1\leq i\leq N$.	
	Then we begin to estimate $\widetilde{\mathscr{S}_{C_1}}$. (The estimation of $\widetilde{\mathscr{S}_{C_i}}$ is similar to that of $\widetilde{\mathscr{S}_{C_1}}$.)
	Using the following fact that (see [17, (3.12) and (3.15)-(3.18)])
	\begin{align*}
		\sum_{x^{1-\alpha}<p<x^{1-\beta}}\sum_{dp\leq x}1=\frac{1}{\log x}\sum_{p'<x}\log p'\sum_{x^{1-\alpha}<p<x^{1-\beta}}\sum_{\substack{dp\leq x\\dp-1\equiv0\mkern-15mu\pmod{p'}}}1+o(x)
	\end{align*}
	and \begin{align*}
		\sum_{p'<x^{1/2}/(\log x)^B}\log p'\sum_{x^{1-\alpha}<p<x^{1-\beta}}\sum_{\substack{dp\leq x\\dp-1\equiv0\mkern-15mu\pmod{p'}}}1=\left(\frac{1}{2}\log \frac{1-\beta}{1-\alpha}+o(1)\right)x\log x,\tag{4.6}
	\end{align*}
	we get
	\begin{align*}
		\sum_{x^{1/2}/(\log x)^B<p'<x}\log p'\sum_{x^{1-\alpha}<p<x^{1-\beta}}\sum_{\substack{dp\leq x\\dp-1\equiv0\mkern-15mu\pmod{p'}}}1=\left(\frac{1}{2}\log \frac{1-\beta}{1-\alpha}+o(1)\right)x\log x,\tag{4.7}
	\end{align*}
	for $0<\beta<\alpha<1/2$. Thus for $t_1\in(0,\eta_1)$, we have
	\begin{align*}
		\widetilde{\mathscr{S}_{C_1}}=&\sum_{x^{1-\eta_1}<p<x^{1-\eta_{2}}}\sum_{dp\leq x}1-\sum_{x^{1-\eta_1}<p'<x}\sum_{x^{1-\eta_1}<p<x^{1-\eta_{2}}}\sum_{\substack{dp\leq x\\dp-1\equiv0\mkern-15mu\pmod{p'}}}1\\=&\frac{1}{1-\eta_1+t_1}\frac{1}{\log x}\sum_{p'<x}\log p'\sum_{x^{1-\eta_1}<p<x^{1-\eta_2}}\sum_{\substack{dp\leq x\\dp-1\equiv0\mkern-15mu\pmod{p'}}}1\\&-\sum_{x^{1-\eta_1}<p'<x}\sum_{x^{1-\eta_1}<p\leq x^{1-\eta_2}}\sum_{\substack{dp\leq x\\dp-1\equiv0\mkern-15mu\pmod{p'}}}1\\&+\left(1-\frac{1}{1-\eta_1+t_1}\right)\sum_{x^{1-\eta_1}<p<x^{1-\eta_2}}\sum_{dp\leq x}1+o(x)\\=&\frac{1}{1-\eta_1+t_1}\frac{1}{\log x}\sum_{p'<x^{1-\eta_1}}\log p'\sum_{x^{1-\eta_1}<p<x^{1-\eta_2}}\sum_{\substack{dp\leq x\\dp-1\equiv0\mkern-15mu\pmod{p'}}}1\\&+\frac{1}{\log x}\sum_{x^{1-\eta_1}<p'<x}\left(\frac{1}{1-\eta_1+t_1}\log p'-\log x\right)\sum_{x^{1-\eta_1}<p<x^{1-\eta_2}}\sum_{\substack{dp\leq x\\dp-1\equiv0\mkern-15mu\pmod{p'}}}1
		\\&+
		\left(1-\frac{1}{1-\eta_1+t_1}\right)\sum_{x^{1-\eta_1}<p<x^{1-\eta_2}}\sum_{dp\leq x}1+o(x)\\=&\frac{1}{\log x}(S_A+S_B)+S_C+o(x),\tag{4.8}
	\end{align*}
	where
	\begin{align*}
		&S_A=\frac{1}{1-\eta_1+t_1}\sum_{p'<x^{1/2}/(\log x)^B}\log p'\sum_{x^{1-\eta_1}<p<x^{1-\eta_2}}\sum_{\substack{dp\leq x\\dp-1\equiv0\mkern-15mu\pmod{p'}}}1\\&\quad=\Bigg(\frac{1}{2(1-\eta_1+t_1)}\log\frac{1-\eta_2}{1-\eta_1}+o(1)\Bigg)x\log x,
		\\&S_B=\frac{1}{1-\eta_1+t_1}\sum_{x^{1/2}/(\log x)^B<p'<x^{1-\eta_1}}\log p'\sum_{x^{1-\eta_1}<p<x^{1-\eta_2}}\sum_{\substack{dp\leq x\\dp-1\equiv0\mkern-15mu\pmod{p'}}}1\\& \qquad+\sum_{x^{1-\eta_1}<p'<x}\left(\frac{1}{1-\eta_1+t_1}\log p'-\log x\right)\sum_{x^{1-\eta_1}<p<x^{1-\eta_2}}\sum_{\substack{dp\leq x\\dp-1\equiv0\mkern-15mu\pmod{p'}}}1,\\&S_C=\left(1-\frac{1}{1-\eta_1+t_1}\right)\sum_{x^{1-\eta_1}<p<x^{1-\eta_2}}\sum_{dp\leq x}1\\&\quad=\Bigg(\left(1-\frac{1}{1-\eta_1+t_1}\right)\log\frac{1-\eta_2}{1-\eta_1}+o(1)\Bigg)x.
	\end{align*}
	Different from (3.13) and (3.14) in [17], we introduce a new parameter $t_1\in(0,\eta_1)$ (depending on $\eta_1$). We want to find the largest $t_1$ such that $S_B\geq0$. For $t_2\in(0,\eta_1-t_1)$, by (4.7), we have
	\begin{align*}
		S_B&=\frac{1}{1-\eta_1+t_1}\sum_{x^{1/2}/(\log x)^B<p'<x^{1-\eta_1}}\log p'\sum_{x^{1-\eta_1}<p<x^{1-\eta_2}}\sum_{\substack{dp\leq x\\dp-1\equiv0\mkern-15mu\pmod{p'}}}1\\&\quad+\sum_{x^{1-\eta_1}<p'<x}\left(\frac{1}{1-\eta_1+t_1}\log p'-\log x\right)\sum_{x^{1-\eta_1}<p<x^{1-\eta_2}}\sum_{\substack{dp\leq x\\dp-1\equiv0\mkern-15mu\pmod{p'}}}1\\&\geq\left(\frac{1}{1-\eta_1+t_1}-\frac{1}{1-\eta_1+t_1+t_2}\right)\sum_{x^{1/2}/(\log x)^B<p'<x}\log p'\sum_{x^{1-\eta_1}<p<x^{1-\eta_2}}\sum_{\substack{dp\leq x\\dp-1\equiv0\mkern-15mu\pmod{p'}}}1\\&\quad+\sum_{x^{1-\eta_1}<p'<x^{1-\eta_1+t_1+t_2}}\left(\frac{1}{1-\eta_1+t_1+t_2}\log p'-\log x\right)\sum_{x^{1-\eta_1}<p<x^{1-\eta_2}}\sum_{\substack{dp\leq x\\dp-1\equiv0\mkern-15mu\pmod{p'}}}1\\&=\frac{1}{2}\left(\frac{1}{1-\eta_1+t_1}-\frac{1}{1-\eta_1+t_1+t_2}\right)\log \frac{1-\eta_2}{1-\eta_1}x\log x+o(x\log x)\\&\quad-\sum_{x^{1-\eta_1}<p'<x^{1-\eta_1+t_1+t_2}}\left(\log x-\frac{1}{1-\eta_1+t_1+t_2}\log p'\right)\sum_{x^{1-\eta_1}<p<x^{1-\eta_2}}\sum_{\substack{dp\leq x\\dp-1\equiv0\mkern-15mu\pmod{p'}}}1.\tag{4.9}
	\end{align*}

	Subsequently, we apply the Rosser-Iwaniec sieve to derive an upper bound estimate for the final summation of the RHS of (4.9).
	Write\begin{align*}
		\mathcal{B}(m)=\{dp\leq x:x^{1-\eta_1}<p<x^{1-\eta_2},dp-1\equiv0\mkern-15mu\pmod{m}\}.
	\end{align*}
	Then we have\begin{align*}
		&\sum_{x^{1-\eta_1}<p'<x^{1-\eta_1+t_1+t_2}}\left(\log x-\frac{1}{1-\eta_1+t_1+t_2}\log p'\right)\sum_{x^{1-\eta_1}<p<x^{1-\eta_2}}\sum_{\substack{dp\leq x\\dp-1\equiv0\mkern-15mu\pmod{p'}}}1\\&\leq\sum_{x^{\eta_1-t_1-t_2}/(\log x)^B<m<x^{\eta_1}}\left(\log x-\frac{1}{1-\eta_1+t_1+t_2}\log (x/(m(\log x)^B))\right)\\&\quad\times|\{dp\in \mathcal{B}(m):(dp-1)/m\  \text{is a prime number}\}|+O\left(\frac{x}{(\log x)^A}\right).\tag{4.10}
	\end{align*}
	It is not difficult to deduce that $2|dm$.  By Lemma 2.4, we have\begin{align*}
		&|\{dp\in \mathcal{B}(m):(dp-1)/m\  \text{is a prime number}\}|\\&\leq |\{dp:x^{\eta_2}/(\log x)^B<d\leq x^{\eta_1} ,dp\leq x,2|dm,((dp-1)/m,P(z))=1\}|\\&\leq \sum_{d'}\lambda_{d'}^+|\{dp:x^{\eta_2}/(\log x)^B<d\leq x^{\eta_1},dp\leq x,2|dm,(dp-1)/m\equiv0\mkern-15mu\pmod{d'}\}|\\&  =\sum_{d'}\frac{\lambda_{d'}^+}{\varphi(md')}\sum_{\substack{x^{\eta_2}/(\log x)^B<d\leq x^{\eta_1}\\2|dm,(d,md')=1}}\pi(x/d)\\&\quad+\sum_{d'}\lambda_{d'}^+\sum_{\substack{x^{\eta_2}/(\log x)^B<d\leq x^{\eta_1}\\2|dm,(d,md')=1}}\Big(\pi(x;d,1,md')-\frac{\pi(x/d)}{\varphi(md')}\Big),\tag{4.11}
	\end{align*}
	where $d'|P(z)$, $P(z)=\prod_{p\leq z}p$, $d'<D=z^2$ and $\lambda_{d'}^+*1\geq \mu*1$. 
	Thanks to the arguments of Iwaniec [8, Theorem 1 and Theorem 4], we can choose an appropriate sieve weight $\lambda_{d'}^+$ and the sum $\sum_{d'}\lambda_{d'}^+r(\mathcal{A},d')$ can be
	rearranged and transformed to the following flexible form:
	\begin{align*}
		\sum_{h<\exp(8/\varepsilon^2)}\sum_{d'|P(z)}\lambda^+(d',h)r(\mathcal{A},d'),
	\end{align*}
	where the coefficients $\lambda^+(d',h)=O(1)$ vanish for $d'>D$. Especially, $\lambda^+(d',h)$
	are well factorable of level $D$. Put $m\in(x^{\eta_1-t_1-t_2}/(\log x)^B,x^{\eta_1})$ in dynadic range $m\sim\mathcal{M}$. We choose
	\begin{align*}
		D=&\left\{ \begin{aligned}
			&x^{4/7-\varepsilon}/\mathcal{M}, \quad&& \eta_1<2/7-\varepsilon;\\&x^{1/2}/(\mathcal{M}(\log x)^B),\quad&& \text{otherwise}.
		\end{aligned}\right.
	\end{align*} 
	The final remainder term is
	\begin{align*}
		&\log x\sum_{h<\exp(8/\varepsilon^2)}\sum_{m\sim\mathcal{M}}f_1(m)\sum_{d'}\lambda^+(d',h)\sum_{\substack{x^{\eta_2}/(\log x)^B<d\leq x^{\eta_1}\\2|dm,(d,md')=1}}\Big(\pi(x;d,1,md')-\frac{\pi(x/d)}{\varphi(md')}\Big)\\=&\log x\sum_{h<\exp(8/\varepsilon^2)}\sum_{m\sim\mathcal{M}}f_1(m)\sum_{d'}\lambda^+(d',h)\sum_{\substack{x^{\eta_2}/(\log x)^B<d\leq x^{\eta_1}\\2|d,(d,md')=1}}\Big(\pi(x;d,1,md')-\frac{\pi(x/d)}{\varphi(md')}\Big)\\&+\log x\sum_{h<\exp(8/\varepsilon^2)}\sum_{\substack{m\sim\mathcal{M}\\2|m}}f_1(m)\sum_{d'}\lambda^+(d',h)\sum_{\substack{x^{\eta_2}/(\log x)^B<d\leq x^{\eta_1}\\(d,md')=1}}\Big(\pi(x;d,1,md')-\frac{\pi(x/d)}{\varphi(md')}\Big),
	\end{align*}
	where \begin{align*}
		f_1(m)=1-\frac{1}{1-\eta_1+t_1+t_2}\frac{\log (x/(m(\log x)^B))}{\log x}.
	\end{align*} 
	Note that for $2\mathcal{M}<x^{2/7-\varepsilon}$ and any pair $Q_1<Q_2$ with  $Q_1Q_2=2x^{4/7-\varepsilon}$, we can write\begin{align*}
		(1_{(\mathcal{M},2\mathcal{M}]}f_1)*\lambda^+(\cdot,h)=(1_{(\mathcal{M},2\mathcal{M}]}f_1)*(\lambda_1*\lambda_2)=((1_{(\mathcal{M},2\mathcal{M}]}f_1)*\lambda_1)*\lambda_2,
	\end{align*}
	where $\lambda_1$ and $\lambda_2$ have levels $Q_2/(2\mathcal{M})$ and $Q_1$ respectively, and satisfy $\lambda^+(\cdot,h)=\lambda_1*\lambda_2$. Thus  $(1_{(\mathcal{M},2\mathcal{M}]}f_1)*\lambda^+(\cdot,h)$ are well factorable of level $2x^{4/7-\varepsilon}$ for $2\mathcal{M}<x^{2/7-\varepsilon}$, and the same holds for $(1_{(\mathcal{M},2\mathcal{M}]}1_{2|\cdot}f_1)*\lambda^+(\cdot,h)$. By Lemma 2.5 and Lemma 2.6, the final remainder term is
	\begin{align*}
		\ll\frac{x}{(\log x)^{A-2}}.\tag{4.12}
	\end{align*}
	
	Now it remains to estimate the main term. Changing the order of summation, we get
	\begin{align*}
		\sum_{d'}\frac{\lambda_{d'}^+}{\varphi(md')}&\sum_{\substack{x^{\eta_2}/(\log x)^B<d\leq x^{\eta_1}\\2|dm,(d,md')=1}}\pi(x/d)=\sum_{\substack{x^{\eta_2}/(\log x)^B<d\leq x^{\eta_1}\\2|dm,(d,m)=1}}\pi(x/d)\sum_{(d',d)=1}\frac{\lambda_{d'}^+}{\varphi(md')}\\&=\sum_{\substack{x^{\eta_2}/(\log x)^B<d\leq x^{\eta_1}\\2|dm,(d,m)=1}}\pi(x/d)\sum_{d'|P'(z)}\frac{\lambda_{d'}^+}{\varphi(md')}\\&=\frac{F(2)+o(1)}{\varphi(m)}\sum_{\substack{x^{\eta_2}/(\log x)^B<d<x^{\eta_1}\\2|dm,(d,m)=1}}\frac{x}{d\log (x/d)}\prod_{\substack{(p,md)=1\\p\leq z}}\left(1-\frac{1}{p-1}\right)\prod_{\substack{p|m\\p\leq z}}\left(1-\frac{1}{p}\right)\\&=\frac{F(2)+o(1)}{m}H(m)\prod_{2<p\leq z}\left(\frac{p-2}{p-1}\right)\sum_{\substack{x^{\eta_2}/(\log x)^B<d<x^{\eta_1}\\2|dm,(d,m)=1}}\frac{x}{d\log (x/d)}H(d),\tag{4.13}
	\end{align*}
	where 
	\begin{align*}
		P'(z)=\prod_{\substack{p\leq z\\(p,d)=1}}p,\quad H(d)=\prod_{\substack{p|d\\p>2}}\left(\frac{p-1}{p-2}\right).
	\end{align*}
	The partial sum of $H(d)$ is well studied by La Bret\`{e}che, Pomerance and Tenenbaum [4] by
	employing the Selberg-Delange method: for any nonnegative integer $v$,
	\begin{align*}
		\sum_{\substack{d\leq y\\(d,a)=1}}H(2^vd)\sim \frac{A_0y}{2^{\varepsilon(a)}H(a)}\qquad(y\rightarrow\infty),
	\end{align*}
	where $\varepsilon(a)=1$ if $2|a$ and $\varepsilon(a)=0$ if $2\nmid a$, and $A_0$ is an absolute constant defined by
	\begin{align*}
		A_0:=\prod_{p>2}\left(1+\frac{1}{p(p-2)}\right).
	\end{align*}
	If $2|m$, we have
	\begin{align*}
		\sum_{\substack{x^{\eta_2}/(\log x)^B<d<x^{\eta_1}\\2|dm,(d,m)=1}}\frac{x}{d\log (x/d)}H(d)&=\sum_{\substack{x^{\eta_2}/(\log x)^B<d<x^{\eta_1}\\(d,m)=1}}\frac{x}{d\log (x/d)}H(d)\\&=(1+o(1))\frac{A_0}{2H(m)}\int_{x^{\eta_2}/(\log x)^B}^{x^{\eta_1}}\frac{x}{t\log(x/t)}\mathrm{d}t\\&=(1+o(1))\frac{A_0}{2H(m)}x\log \frac{1-\eta_2}{1-\eta_1}.
	\end{align*}
	If $2\nmid m$, we also have
	\begin{align*}
		\sum_{\substack{x^{\eta_2}/(\log x)^B<d<x^{\eta_1}\\2|dm,(d,m)=1}}\frac{x}{d\log (x/d)}H(d)&=\sum_{\substack{x^{\eta_2}/(2(\log x)^B)<d<x^{\eta_1}/2\\(d,m)=1}}\frac{x}{2d\log (x/(2d))}H(d)\\&=(1+o(1))\frac{A_0}{2H(m)}\int_{x^{\eta_2}/(2(\log x)^B)}^{x^{\eta_1}/2}\frac{x}{t\log (x/(2t))}\mathrm{d}t\\&=(1+o(1))\frac{A_0}{2H(m)}x\log \frac{1-\eta_2}{1-\eta_1}.
	\end{align*}
	By Mertens' theorem, we have
	\begin{align*}
		\prod_{2<p\leq z}\left(\frac{p-2}{p-1}\right)&=\prod_{2<p\leq z}\left(\frac{p-2}{p-1}\right)\left(1-\frac{1}{p}\right)^{-1}\prod_{2<p\leq z}\left(1-\frac{1}{p}\right)\\&=\frac{2e^{-\gamma}+o(1)}{\log z}\prod_{2<p\leq z}\frac{p(p-2)}{(p-1)^2}\\&=\frac{2e^{-\gamma}+o(1)}{\log z}\prod_{2<p\leq z}\left(1+\frac{1}{p(p-2)}\right)^{-1}\\&=\frac{2e^{-\gamma}+o(1)}{A_0\log z}.
	\end{align*}
	Then the RHS of (4.13) becomes
	\begin{align*}
		(1+o(1))\frac{2x}{m\log D}\log \frac{1-\eta_2}{1-\eta_1}.
	\end{align*}
	Summing over all integers $m$, we have
	\begin{align*}
		&\sum_{x^{1-\eta_1}<p'<x^{1-\eta_1+t_1+t_2}}\left(\log x-\frac{1}{1-\eta_1+t_1+t_2}\log p'\right)\sum_{x^{1-\eta_1}<p<x^{1-\eta_2}}\sum_{\substack{dp\leq x\\dp-1\equiv0\mkern-15mu\pmod{p'}}}1\\&\leq\left(\log\frac{1-\eta_2}{1-\eta_1}+o(1)\right)x\sum_{x^{\eta_1-t_1-t_2}/(\log x)^B<m<x^{\eta_1}}\left(\log x-\frac{\log (x/(m(\log x)^B))}{1-\eta_1+t_1+t_2}\right)\frac{2}{m\log D}\\&=\left(g_1(\eta_1,t_1,t_2)\log\frac{1-\eta_2}{1-\eta_1}+o(1)\right)x\log x\tag{4.14}
	\end{align*}
	where
	\begin{align*}
		g_1(\eta_1,t_1,t_2)=\left\{ \begin{aligned}
			&2\int_{\eta_1-t_1-t_2}^{\eta_1}\frac{1-(1-t)/(1-\eta_1+t_1+t_2)}{4/7-\varepsilon-t}\mathrm{d}t, \quad&& \eta_1<{2/7-\varepsilon};\\&2\int_{\eta_1-t_1-t_2}^{\eta_1}\frac{1-(1-t)/(1-\eta_1+t_1+t_2)}{1/2-t}\mathrm{d}t,\quad&& \text{otherwise}.
		\end{aligned}\right.
	\end{align*}
	By (4.9)-(4.14), we obtain a lower bound of $S_B$:
	\begin{align*}
		S_B\geq \left(\Bigg(\frac{1}{2}\left(\frac{1}{1-\eta_1+t_1}-\frac{1}{1-\eta_1+t_1+t_2}\right)-g_1(\eta_1,t_1,t_2)\Bigg)\log\frac{1-\eta_2}{1-\eta_1}+o(1)\right)x\log x.
	\end{align*}
	Then we need to find the largest $t_1\in(0,\eta_1)$ such that $S_B\geq 0$, i.e., for some $0<t_2<\eta_1-t_1$, one has
	\begin{align*}
		&\frac{1}{2}\left(\frac{1}{1-\eta_1+t_1}-\frac{1}{1-\eta_1+t_1+t_2}\right)>g_1(\eta_1,t_1,t_2).\tag{4.15}
	\end{align*}
	We now let \(t_1=t_1(\eta_1)\) be the function satisfying condition (4.15). (see Figure 2 below)
	By (4.8) and $S_B\geq 0$, we have
	\begin{align*}
		\widetilde{\mathscr{S}_{C_1}}&\geq x\left(\frac{1}{2(1-\eta_1+t_1(\eta_1))}\log \frac{1-\eta_2}{1-\eta_1}+\left(1-\frac{1}{1-\eta_1+t_1(\eta_1)}\right)\log \frac{1-\eta_2}{1-\eta_1}\right)+o(x)\\&=x\frac{1-2\eta_1+2t_1(\eta_1)}{2(1-\eta_1+t_1(\eta_1))}\log \frac{1-\eta_2}{1-\eta_1}+o(x).
	\end{align*}
	For $\nu=0.000001$, there exists $\Delta>0$ such that if $\max(\max_i|\eta_{i+1}-\eta_i|,1/2-\eta_1)<\Delta$, one has
	\begin{align*}
		\mathscr{S}'_3+\mathscr{S}'_4\geq \sum_{i=1}^N\widetilde{\mathscr{S}_{C_i}}+o(x)\geq& \left(\sum_{i=1}^{N}\frac{1-2\eta_i+2t_1(\eta_i)}{2(1-\eta_i+t_1(\eta_i))}\log\frac{1-\eta_{i+1}}{1-\eta_i}+o(1)\right)x\\\geq&\left(\int_c^{1/2}\frac{1-2\eta+2t_1(\eta)}{2(1-\eta)(1-\eta+t_1(\eta))}\mathrm{d}\eta-\nu+o(1)\right)x.
	\end{align*}
	For $\eta=\eta_1>0.1348$, by numerical calculation, we show the pragh of $t_1(\eta)$:
	\begin{figure}[ht]
		\centering
		\includegraphics[width=0.9\textwidth]{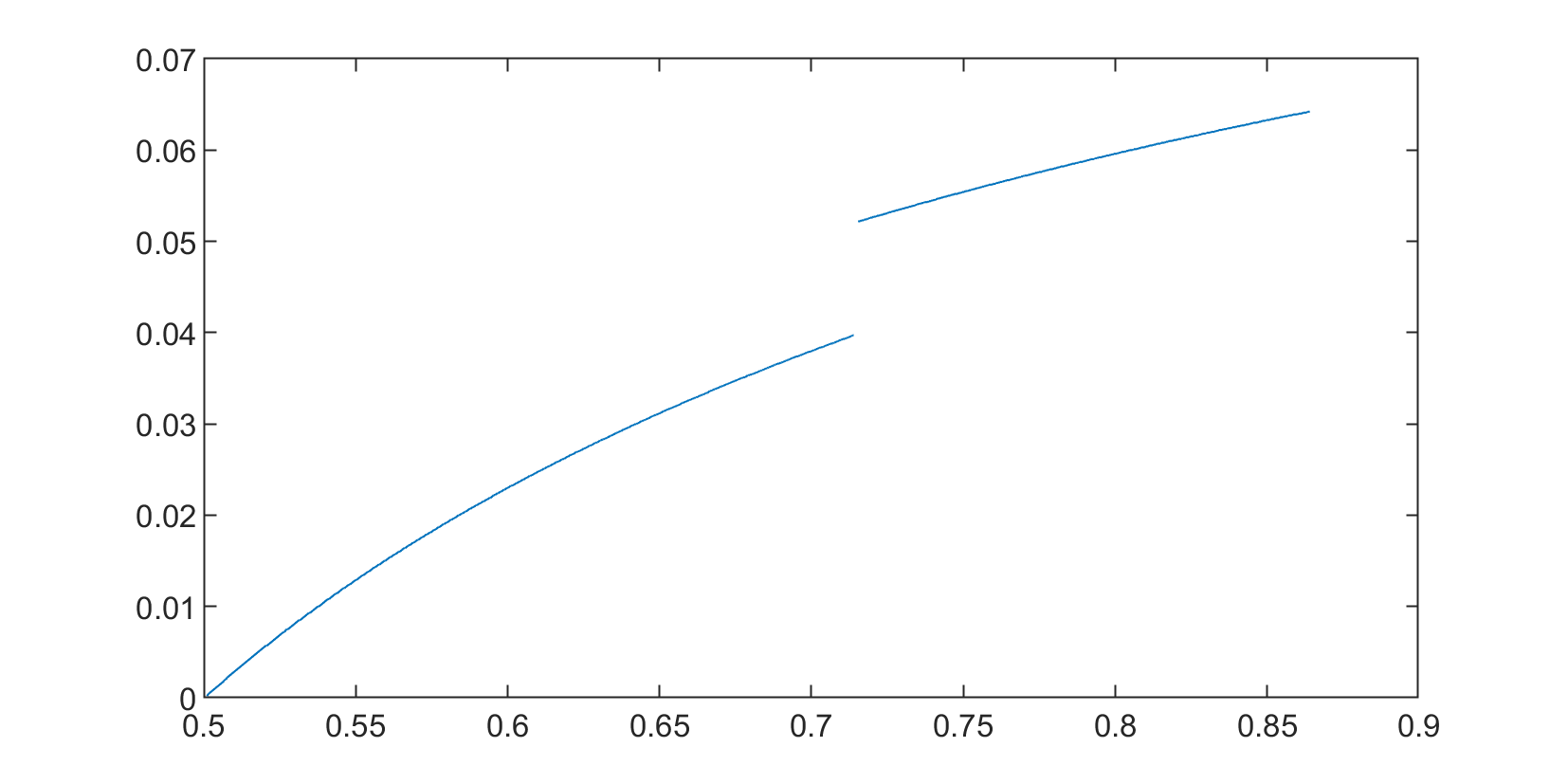} 
		\caption{$\eta\rightarrow t(\eta)=t_1(1-\eta)$}
	\end{figure}
	
	However, when $\eta_1\approx0.5$, the lower bound of $\widetilde{\mathscr{S}_{C_1}}$ above is not a good result. Note that\begin{align*}
		&\sum_{x^{1-\eta_i}<p<x^{1-\eta_{i+1}}}\sum_{dp\leq x}1-\sum_{x^{1-\eta_1}<p'<x}\sum_{x^{1-\eta_i}<p<x^{1-\eta_{i+1}}}\sum_{\substack{dp\leq x\\dp-1\equiv0\mkern-15mu\pmod{p'}}}1\\&\geq \sum_{x^{1-\eta_i}<p<x^{1-\eta_{i+1}}}\sum_{\substack{n\leq x-1,P^+(n)\leq x^{1/2}\\p|n+1}}1.
	\end{align*}
	After applying Theorem 3.2, we get
	\begin{align*}
		\mathscr{S}'_3+\mathscr{S}'_4\geq \left(\int_{c}^{1/2}\max\left(\frac{1-2\eta+2t_1(\eta)}{2(1-\eta)(1-\eta+t_1(\eta))},\frac{C(1-\eta)}{1-\eta}\right)\mathrm{d}\eta-\nu+o(1)\right)x.\tag{4.16}
	\end{align*}
	where the $C(\cdot)$ is the $C(\cdot)$ in Theorem 3.2.
	\subsection{Estimation of $\mathcal{R}$ and $\mathscr{S}'_5$}
	Recall\begin{align*}
		\mathcal{R}=\frac{1}{2}\sum_{x^{\delta_1}<p_1,p_2<x^{1/2}/(\log x)^B}\sum_{\substack{n\in S(x,x^{1/2}/(\log x)^B)\\p_1p_2|n+1}}1
	\end{align*}
	and\begin{align*}
		\mathscr{S}'_5=\sum_{x^{\delta_1}<p_1<x^{1/2}/(\log x)^B}\sum_{x^{1/2}<p_2<x^{1-\delta_1}}\sum_{\substack{n\in S(x,x^{1/2}/(\log x)^B)\\p_1p_2|n+1}}1.
	\end{align*}
	Thus for $(\alpha,\beta)\in\{(\delta_1,1/2-B\log\log x/\log x), (1/2,1-\delta_1)\}$, we consider
	\begin{align*}
		\mathop{\mathcal{R}}\limits^{\extsim[1]}(\alpha,\beta)&:=\sum_{\substack{x^{\delta_1}<p_1<x^{1/2}/(\log x)^B\\x^{\alpha}<p_2<x^{\beta}}}\sum_{\substack{n\in S(x,x^{1/2}/(\log x)^B)\\p_1p_2|n+1}}1.
	\end{align*}
	Note that\begin{align*}
		\mathop{\mathcal{R}}\limits^{\extsim[1]}(\alpha,\beta)&= \sum_{\substack{x^{\delta_1}<p_1<x^{1/2}/(\log x)^B\\x^{\alpha}<p_2<x^{\beta}}}\sum_{\substack{n\in S(x,x^{1/2}/(\log x)^B)\\p_1p_2|n+1}}\frac{\log n}{\log x}+o(x)\\&=\sum_{\substack{x^{\delta_1}<p_1<x^{1/2}/(\log x)^B\\x^{\alpha}<p_2<x^{\beta}}}\sum_{\substack{n<x\\p_1p_2|n+1}}\frac{\log h(n)}{\log x}-\sum_{\substack{x^{\delta_1}<p_1<x^{1/2}/(\log x)^B\\x^{\alpha}<p_2<x^{\beta}}}\sum_{\substack{n=dp<x\\x^{1/2}<p<x\\p_1p_2|n+1}}\frac{\log (x/p)}{\log x}+o(x),\tag{4.17}
	\end{align*}
	where  $h(n):=\sum_{p^k||n,p<x^{1/2}/(\log x)^B}p^k$.
	After invoking the identity
	\begin{align*}
		\log n=\sum_{m|n}\Lambda(m), 
	\end{align*}
	we obtain
	\begin{align*}
		\mathop{\mathcal{R}}\limits^{\extsim[1]}(\alpha,\beta)=&\frac{1}{\log x}\sum_{p'<x^{1/2}/(\log x)^B}\log p'\sum_{\substack{x^{\delta_1}<p_1<x^{1/2}/(\log x)^B\\x^{\alpha}<p_2<x^{\beta}}}\sum_{\substack{n<x\\p'|n\\p_1p_2|n+1}}1\\&-\sum_{\substack{x^{\delta_1}<p_1<x^{1/2}/(\log x)^B\\x^{\alpha}<p_2<x^{\beta}}}\sum_{\substack{n=dp<x\\x^{1/2}<p<x\\p_1p_2|n+1}}\frac{\log (x/p)}{\log x}+o(x)\\=&B_1-B_2+o(x).\tag{4.18}
	\end{align*}
	Let $n+1=d_1p_1p_2$. Omitting the details, applying Lemma 2.5 with $p\leftarrow p_1$, we obtain
	\begin{align*}
		B_1&=\frac{1}{\log x}\sum_{p'<x^{1/2}/(\log x)^B}\log p'\sum_{\substack{l=d_1p_2\\x^{\alpha}<p_2<x^{\beta}}}\sum_{\substack{lp_1\leq x\\x^{\delta_1}<p_1<x^{1/2}/(\log x)^B\\p'|lp_1-1}}1+o(x)\\&=\left(\frac{1}{2}g_2(\alpha,\beta)+o(1)\right)x,
	\end{align*}
	where\begin{align*}
		g_2(\alpha,\beta)=\left\{ \begin{aligned}
			&\left(\log\frac{1}{2\delta_1}\right)^2, \quad&& (\alpha,\beta)=(\delta_1,1/2-B\log\log x/\log x);\\&\int_{\delta_1}^{1/2}\frac{\log(2-2t)}{t}\mathrm{d}t,\quad&& (\alpha,\beta)=(1/2,1-\delta_1).
		\end{aligned}\right.
	\end{align*}
	For $B_2$, let $n+1=d_1p_1p_2$. After reducing the range of $d=d_1p_2$, we have
	\begin{align*}
		B_2&\geq \frac{1}{\log x}\sum_{x^{1/2}<p'<x}\log (x/p')\sum_{\substack{d=d_1p_2\\x^{1/2}(\log x)^{2B}<d<x^{1-\delta_1}/(\log x)^B\\x^{\alpha}<p_2<x^{\beta}}}\sum_{\substack{dp_1<x\\p'|dp_1-1}}1+o(x)=B_2'+o(x).
	\end{align*}
	Using the estimation of $B_1$ and the following fact that (the verification is left to interested readers.)
	\begin{align*}
		&\sum_{\substack{d=d_1p_2\\x^{1/2}(\log x)^{2B}<d<x^{1-\delta_1}/(\log x)^B\\x^{\alpha}<p_2<x^{\beta}}}\sum_{\substack{dp_1<x}}1=(g_2(\alpha,\beta)+o(1))x,\\&
		\sum_{\substack{d=d_1p_2\\x^{1/2}(\log x)^{2B}<d<x^{1-\delta_1}/(\log x)^B\\x^{\alpha}<p_2<x^{\beta}}}\sum_{\substack{dp_1<x}}1=\sum_{p'<x}\frac{\log p'}{\log x}\sum_{\substack{d=d_1p_2\\x^{1/2}(\log x)^{2B}<d<x^{1-\delta_1}/(\log x)^B\\x^{\alpha}<p_2<x^{\beta}}}\sum_{\substack{dp_1<x\\p'|dp_1-1}}1+o(x),\\&\sum_{x^{1/2}/(\log x)^B\leq p'\leq x^{1/2}}\frac{\log p'}{\log x}\sum_{\substack{d=d_1p_2\\x^{1/2}(\log x)^{2B}<d<x^{1-\delta_1}/(\log x)^B\\x^{\alpha}<p_2<x^{\beta}}}\sum_{\substack{dp_1<x\\p'|dp_1-1}}1=o(x),
	\end{align*}
	we get
	\begin{align*}
		\frac{1}{\log x}\sum_{x^{1/2}<p'<x}\log p'\sum_{\substack{d=d_1p_2\\x^{1/2}(\log x)^{2B}<d<x^{1-\delta_1}/(\log x)^B\\x^{\alpha}<p_2<x^{\beta}}}\sum_{\substack{dp_1<x\\p'|dp_1-1}}1=\left(\frac{1}{2}g_2(\alpha,\beta)+o(1)\right)x.
	\end{align*}
	Similar to (4.9) (but not the same), for $1/2<s<1$, we have
	\begin{align*}
		B_2'&=\frac{1}{\log x}\sum_{x^{1/2}<p'<x}\log (x/p')\sum_{\substack{d=d_1p_2\\x^{1/2}(\log x)^{2B}<d<x^{1-\delta_1}/(\log x)^B\\x^{\alpha}<p_2<x^{\beta}}}\sum_{\substack{dp_1<x\\p'|dp_1-1}}1\\&\geq \frac{1}{\log x}\left(\frac{1}{s}-1\right)\sum_{x^{1/2}<p'<x}\log p'\sum_{\substack{d=d_1p_2\\x^{1/2}(\log x)^{2B}<d<x^{1-\delta_1}/(\log x)^B\\x^{\alpha}<p_2<x^{\beta}}}\sum_{\substack{dp_1<x\\p'|dp_1-1}}1\\&\quad-\frac{1}{\log x}\sum_{x^{s}\leq p'<x}\Bigg(\frac{1}{s}\log p'-\log x\Bigg)\sum_{\substack{d=d_1p_2\\x^{1/2}(\log x)^{2B}<d<x^{1-\delta_1}/(\log x)^B\\x^{\alpha}<p_2<x^{\beta}}}\sum_{\substack{dp_1<x\\p'|dp_1-1}}1\\&=\left(\frac{1}{2}\left(\frac{1}{s}-1\right)g_2(\alpha,\beta)+o(1)\right)x-\mathcal{D}.\tag{4.19}
	\end{align*}
	where\begin{align*}
		\mathcal{D}=\frac{1}{\log x}\sum_{x^{s}\leq p'<x}\Bigg(\frac{1}{s}\log p'-\log x\Bigg)\sum_{\substack{d=d_1p_2\\x^{1/2}(\log x)^{2B}<d<x^{1-\delta_1}/(\log x)^B\\x^{\alpha}<p_2<x^{\beta}}}\sum_{\substack{dp_1<x\\p'|dp_1-1}}1.
	\end{align*}
	After applying the Rosser-Iwaniec sieve and using Lemma 2.5 to bound the remainder terms (We guess that Lemma 2.6 also holds for $l=d=d_1p_2$ with the above conditions, but this is not the key.), we can derive an upper bound for $\mathcal{D}$:
	\begin{align*}
		\mathcal{D}\leq 2x\int_{0}^{1-s}\frac{(1-t)/s-1}{1/2-t}\mathrm{d}tg_2(\alpha,\beta)+o(x).
	\end{align*}
	We thus have
	\begin{align*}
		B_2\geq B_2'+o(x)\geq x\left(\frac{1}{2}\left(\frac{1}{s}-1\right)-2\int_{0}^{1-s}\frac{(1-t)/s-1}{1/2-t}\mathrm{d}t\right)g_2(\alpha,\beta)+o(x).
	\end{align*}
	Taking $s=0.882$, by numerical calculation, we have
	\begin{align*}
		&B_2\geq (0.0
		3249g_2(\alpha,\beta)+o(1))x.
	\end{align*}
	Then (4.18) becomes \begin{align*}
		&\mathop{\mathcal{R}}\limits^{\extsim[1]}(\alpha,\beta)\leq (0.46751g_2(\alpha,\beta)+o(1))x,
	\end{align*}
	and we have\begin{align*}
		\mathcal{R}\leq\left(0.233755\Bigg(\log\frac{1}{2\delta_1}\Bigg)^2+o(1)\right)x,\quad
		\mathscr{S}'_5\leq \left(0.46751\int_{\delta_1}^{1/2}\frac{\log{(2-2t)}}{t}\mathrm{d}t+o(1)\right)x.\tag{4.20}
	\end{align*}
	\begin{remark}
		In [27, \S4.2], Wang used a parameter $\theta\in (0,1/2-\varepsilon)$ and splitted $P^+(n)\leq x^{1/2-\varepsilon}$ into $x^\theta<P^+(n)\leq x^{1/2-\varepsilon}$ and $P^+(n)\leq x^{\theta}$, which should be compared with (4.17) and (4.18). (Note that we have enlarged $\mathcal{R}$ and $\mathscr{S}'_5$.)
	\end{remark}
	
	\subsection{Conclusion}
	We conclude from (4.1), (4.2), (4.3), (4.16), (4.20) and Lemma 4.1 that
	\begin{align*}
		\sum_{\substack{n<x\\P^+(n)<P^+(n+1)}}1\geq (\mathcal{C}(c,\delta_1)-2\nu)x+o(x),
	\end{align*} 
	where $\nu=0.000001$, and for the $C(\cdot)$ in Theorem 3.2, $\mathcal{C}(c,\delta_1)$ is defined by
	\begin{align*}
		\mathcal{C}(c,\delta_1):=&\log\frac{1}{1-c}-2\int_0^c\log\left(\frac{1-t}{1-c}\right)\frac{\mathrm{d}t}{4/7-t}+\int_{\delta_1}^{1/2}\rho\left(\frac{1}{t}\right)\frac{1}{t}\mathrm{d}t\\&+\int_c^{1/2}\max\left(\frac{1-2\eta+2t_1(\eta)}{2(1-\eta)(1-\eta+t_1(\eta))},\frac{C(1-\eta)}{1-\eta}\right)\mathrm{d}\eta\\&-0.233755\Bigg(\log\frac{1}{2\delta_1}\Bigg)^2-0.46751\int_{\delta_1}^{1/2}\frac{\log{(2-2t)}}{t}\mathrm{d}t
	\end{align*}
	with $0<c<2/7-\varepsilon$ and $1/3<\delta_1<1/2$. By numerical calculation, we have
	\begin{align*}
		\mathcal{C} (c,\delta_1)-2\nu> 0.280,
	\end{align*}
	by taking $c=0.1348$, $\delta_1=0.417$. 
	We complete the proof of Theorem 1.4.

	\section{Proof of Theorem 1.5}
	We consider that $n$ is a $y$-smooth number and $n+1$ has two prime factors $p_1$ and $p_2$ satisfying \begin{align*}
		\frac{33}{107}-2\varepsilon<\frac{\log p_1}{\log x}<\frac{33}{107}-\varepsilon<\frac{\log p_2}{\log x}<\frac{33}{107}.
	\end{align*}
	Then we have
	\begin{align*}
		P^+(n)<y<P^+(n+1)\leq x^{41/107+3\varepsilon},
	\end{align*}
	where $y=x^{1/C}$ with the $C$ in Lemma 2.3.
	Thus\begin{align*}
		\sum_{\substack{n<x\\P^+(n)<P^+(n+1)\leq x^{41/107+3\varepsilon}}}1&\geq \frac{1}{2}\sum_{n\in S(x,y)}\sum_{\substack{p_1p_2|n+1\\x^{33/107-2\varepsilon}<p_1<x^{33/107-\varepsilon}<p_2<x^{33/107}}}1\\&=\frac{1}{2}\sum_{x^{33/107-2\varepsilon}<p_1<x^{33/107-\varepsilon}<p_2<x^{33/107}}\sum_{\substack{n\in S(x,y)\\n\equiv-1\mkern-15mu\pmod{p_1p_2}}}1\\&=\frac{1}{2}(\mathscr{C}_1+\mathscr{C}_2),\tag{5.1}
	\end{align*}
	where
	\begin{align*}
		&\mathscr{C}_1=\sum_{x^{33/107-2\varepsilon}<p_1<x^{33/107-\varepsilon}<p_2<x^{33/107}}\frac{1}{\varphi(p_1p_2)}\sum_{\substack{n\in S(x,y)\\(n,p_1p_2)=1}}1,\\&\mathscr{C}_2=\sum_{x^{33/107-2\varepsilon}<p_1<x^{33/107-\varepsilon}<p_2<x^{33/107}}\Bigg(\sum_{\substack{n\in S(x,y)\\n\equiv-1\mkern-15mu\pmod{p_1p_2}}}1-\frac{1}{\varphi(p_1p_2)}\sum_{\substack{n\in S(x,y)\\(n,p_1p_2)=1}}1\Bigg).
	\end{align*}
	By Lemma 2.3, we have
	\begin{align*}
		\mathscr{C}_2&\ll\sum_{q\leq x^{66/107-\varepsilon}}\Bigg|\sum_{\substack{n\in S(x,y)\\n\equiv-1\mkern-15mu\pmod{p_1p_2}}}1-\frac{1}{\varphi(q)}\sum_{\substack{n\in S(x,y)\\(n,q)=1}}1\Bigg|\\&\ll\frac{x}{(\log x)^A}.\tag{5.2}
	\end{align*}
	By Lemma 2.1 and Mertens' theorem, we have
	\begin{align*}
		\mathscr{C}_1=x\rho\left(C\right)\log\frac{33/107}{33/107-\varepsilon}\log\frac{33/107-\varepsilon}{33/107-2\varepsilon}\left(1+O\left(\frac{1}{\log x}\right)\right).\tag{5.3}
	\end{align*}
	Conlcuding (5.1), (5.2) and (5.3), we complete the proof of Theorem 1.5.
	
	\section{Proof of Theorem 1.6}
	Applying the following lemma, we begin to estimate $T_c(x)$.\begin{lemma}
		For $0<c<1$ and sufficiently large $x$, we have
		\begin{align*}
			T_c(x)=\sum_{\substack{p< x\\P^+(p-1)> x^{c}}}1+O\left(\frac{x\log\log x}{(\log x)^2}\right).
		\end{align*}
	\end{lemma}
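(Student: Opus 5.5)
We compare the two counting conditions directly: $T_c(x)$ asks for $P^+(p-1)\geq p^c$, while the main sum asks for $P^+(p-1)>x^c$. The exponents differ only for small $p$ or for $p$ whose largest prime factor of $p-1$ lies in a thin window. The plan is to bound the symmetric difference of the two sets by the stated error.

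\textbf{Splitting off small primes.} Let $x_1=x/(\log x)^2$. Primes $p\le x_1$ contribute at most $\pi(x_1)\ll x/(\log x)^3$, which is acceptable. For $x_1<p<x$ we have $p^c\le x^c$, so the condition $P^+(p-1)>x^c$ implies $P^+(p-1)\ge p^c$. The main sum over $p>x_1$ is therefore contained in the set counted by $T_c(x)$. The remaining discrepancy consists of primes $x_1<p<x$ with
\begin{align*}
	p^c\le P^+(p-1)\le x^c.
\end{align*}
For such $p$ we have $p^c\ge x^c(\log x)^{-2c}$. Hence $q:=P^+(p-1)$ lies in the short multiplicative window $I:=[x^c(\log x)^{-2},\,x^c]$, and $q\mid p-1$.

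\textbf{Bounding the window contribution.} We count pairs $(q,p)$ with $q\in I$ prime, $p<x$ prime and $p\equiv 1\pmod q$. Write $p-1=qm$ with $m<x/q\le x^{1-c}(\log x)^2$, where $x^{1-c}$ is a power of $x$. The Brun--Titchmarsh inequality gives
\begin{align*}
	\pi(x;q,1)\ll \frac{x}{\varphi(q)\log(x/q)}\ll \frac{x}{q\log x}
\end{align*}
uniformly for $q\le x^{c}$, because $\log(x/q)\ge(1-c)\log x$. Summing over primes $q\in I$ with Mertens' theorem,
\begin{align*}
	\sum_{q\in I}\frac{1}{q}=\log\frac{\log x^c}{\log(x^c(\log x)^{-2})}+O\Big(\frac{1}{\log x}\Big)\ll\frac{\log\log x}{\log x}.
\end{align*}
The total count of the discrepancy is therefore $\ll x\log\log x/(\log x)^2$. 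Adding the $\pi(x_1)$ term gives
\begin{align*}
	T_c(x)=\sum_{\substack{p<x\\P^+(p-1)>x^c}}1+O\Big(\frac{x\log\log x}{(\log x)^2}\Big).
\end{align*}
The endpoint $p=x$ and ties are negligible.

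\textbf{Main obstacle.} The only delicate point is the uniformity of Brun--Titchmarsh near the top of the window, $q\approx x^c$. Since $c<1$ is fixed, $x/q\ge x^{1-c}$ is a positive power of $x$, so the bound holds with a constant depending only on $c$. The $\log\log x$ loss is exactly the width $\log\big(\log x^c/\log(x^c(\log x)^{-2})\big)$ coming from choosing $x_1=x/(\log x)^2$. Any $x_1=x/(\log x)^{A}$ with $A\geq 2$ works equally well.
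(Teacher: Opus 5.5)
Your argument is correct. It differs from the paper only in that the paper gives no argument of its own: it simply quotes Wu [31, Theorem 2]. You instead give a self-contained proof from standard inputs:
\begin{itemize}
\item the observation that for $p<x$ the condition $P^+(p-1)>x^c$ already forces $P^+(p-1)>p^c$, so the main sum is contained in the set counted by $T_c(x)$ and the difference is nonnegative;
\item the trivial bound $\pi(x/(\log x)^2)$ for small $p$;
\item Brun--Titchmarsh together with Mertens over primes $q\in[x^c(\log x)^{-2},x^c]$.
\end{itemize}

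The citation buys brevity. Your route buys transparency: it shows that only upper-bound sieve information is needed, and it makes the dependence on $c$ explicit. The implied constant is $\ll 1/(c(1-c))$, coming from $\log(x/q)\ge(1-c)\log x$ and from the width of the window. This is harmless for the fixed $c\in(1/2,1)$ used in Theorem 1.6.

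Your method also sharpens easily. Split $p$ dyadically as $x/2^{k+1}<p\le x/2^k$ with $2^k\le(\log x)^2$. The relevant $q$ then lie in $[(x/2^{k+1})^c,x^c]$, so $\sum 1/q\ll (k+1)/\log x$. Summing $\frac{x}{2^k\log x}\cdot\frac{k+1}{\log x}$ over $k$ gives an error $O_c\bigl(x/(\log x)^2\bigr)$. So the $\log\log x$ factor is an artefact of using the single cutoff $x_1$, although the lemma as stated only needs your cruder bound.
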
\begin{proof}
		This is [31, Theorem 2].
	\end{proof}
	Using the following fact that
	\begin{align*}
		&\sum_{p<x}1=\frac{x}{\log x}+o\left(\frac{x}{\log x}\right),\\
		&\sum_{p< x}1=\frac{1}{\log x}\sum_{p'<x}\log p'\sum_{\substack{p<x\\p'|p-1}}1+o\left(\frac{x}{\log x}\right),\\&\frac{1}{\log x}\sum_{p'<x^{1/2}/(\log x)^B}\log p'\sum_{\substack{p<x\\p'|p-1}}1=\frac{1}{2}\frac{x}{\log x}+o\left(\frac{x}{\log x}\right),
	\end{align*}
	we get
	\begin{align*}
		\frac{1}{\log x}\sum_{x^{1/2}/(\log x)^B\leq p'<x}\log p'\sum_{\substack{p<x\\p'|p-1}}1&=\frac{1}{2}\frac{x}{\log x}+o\left(\frac{x}{\log x}\right).\tag{6.1}
	\end{align*}
	For $1/2<c<1$ and $0<t_1<1-c$, by (6.1), we have
	\begin{align*}
		\sum_{\substack{p< x\\P^+(p-1)> x^{c}}}1=&\sum_{x^c<p'<x}\sum_{\substack{p<x\\p'|p-1}}1= \frac{1}{c+t_1}\frac{1}{\log x}\sum_{x^{1/2}/(\log x)^B\leq p'<x}\log p'\sum_{\substack{p<x\\p'|p-1}}1\\&+\frac{1}{\log x}\sum_{x^c<p'<x}\left(\log x-\frac{1}{c+t_1}\log p'\right)\sum_{\substack{p<x\\p'|p-1}}1\\&-\frac{1}{c+t_1}\frac{1}{\log x}\sum_{x^{1/2}/(\log x)^B\leq p'\leq x^c}\log p'\sum_{\substack{p<x\\p'|p-1}}1\\=&\frac{1}{2(c+t_1)}\frac{x}{\log x}+o\left(\frac{x}{\log x}\right)-\mathcal{D},\tag{6.2}
	\end{align*}
	where
	\begin{align*}
		\mathcal{D}=&\frac{1}{\log x}\sum_{x^c<p'<x}\left(\frac{1}{c+t_1}\log p'-\log x\right)\sum_{\substack{p<x\\p'|p-1}}1\\&+\frac{1}{c+t_1}\frac{1}{\log x}\sum_{x^{1/2}/(\log x)^B\leq p'\leq x^c}\log p'\sum_{\substack{p<x\\p'|p-1}}1.
	\end{align*}
	We want to find the largest $t_1$ such that $\mathcal{D}\geq 0$.
	Similar to (4.9), for $t_2\in(0,1-c-t_1)$, we have
	\begin{align*}
		\mathcal{D}\geq&\left(\frac{1}{c+t_1}-\frac{1}{c+t_1+t_2}\right)\frac{1}{\log x}\sum_{x^{1/2}/(\log x)^B\leq p'<x}\log p'\sum_{\substack{p<x\\p'|p-1}}1\\&-\frac{1}{\log x}\sum_{x^{c}<p'<x^{c+t_1+t_2}}\left(\log x-\frac{1}{c+t_1+t_2}\log p'\right)\sum_{\substack{p<x\\p'|p-1}}1\\=&\frac{1}{2}\left(\frac{1}{c+t_1}-\frac{1}{c+t_1+t_2}\right)\frac{x}{\log x}+o\left(\frac{x}{\log x}\right)-\mathcal{E},
	\end{align*}
	where
	\begin{align*}
		\mathcal{E}=\frac{1}{\log x}\sum_{x^{c}<p'<x^{c+t_1+t_2}}\left(\log x-\frac{1}{c+t_1+t_2}\log p'\right)\sum_{\substack{p<x\\p'|p-1}}1. 
	\end{align*}
	Then we only need to estimate $\mathcal{E}$. For $c=u_1<u_2<\cdots<u_N=c+t_1+t_2$, we have
	\begin{align*}
		\mathcal{E}&\leq \sum_{i}\frac{1}{\log x}\left(\log x-\frac{1}{c+t_1+t_2}\log x^{u_i}\right)\sum_{x^{u_i}<p'<x^{u_{i+1}}}\sum_{\substack{p<x\\p'|p-1}}1\\&\leq\sum_{i}\left(1-\frac{u_i}{c+t_1+t_2}\right)\sum_{\substack{x^{1-u_{i+1}}/(\log x)^B<l<x^{1-u_i}\\2|l}}\sum_{\substack{lp\leq x\\lp+1 \ \text{is prime}}}1+o\left(\frac{x}{\log x}\right).\tag{6.3}
	\end{align*}
	As similar arguments to that of [7, Theorem 1.1], we can get
	\begin{align*}
		\sum_{\substack{x^{1-u_{i+1}}/(\log x)^B<l<x^{1-u_i}\\2|l}}\sum_{\substack{lp\leq x\\lp+1 \ \text{is prime}}}1\leq \frac{7}{2}\log\frac{u_{i+1}}{u_i}\frac{x}{\log x}+o\left(\frac{x}{\log x}\right)
	\end{align*}
	Then there exists $\Delta>0$ such that if $\max_{i}|u_i-u_{i+1}|<\Delta$, we have\begin{align*}
		\mathcal{E}&\leq \frac{7}{2}\sum_{i}\left(1-\frac{u_i}{c+t_1+t_2}\right)\log\frac{u_{i+1}}{u_i}\frac{x}{\log x}+o\left(\frac{x}{\log x}\right)\\&\leq \left(\frac{7}{2}\int_c^{c+t_1+t_2}\left(1-\frac{u}{c+t_1+t_2}\right)\frac{\mathrm{d}u}{u}+\nu\right)\frac{x}{\log x}+o\left(\frac{x}{\log x}\right)\\&=\left(\frac{7}{2}\log\frac{c+t_1+t_2}{c}-\frac{7}{2}\frac{t_1+t_2}{c+t_1+t_2}+\nu\right)\frac{x}{\log x}+o\left(\frac{x}{\log x}\right),
	\end{align*}
	where $\nu=0.000001$.
	We need to find the largest $t_1\in(0,1-c)$ such that $\mathcal{D}\geq 0$, i.e., for some $0<t_2<1-c-t_1$, one has 
	\begin{align*}
		\frac{1}{2}\left(\frac{1}{c+t_1}-\frac{1}{c+t_1+t_2}\right)>\frac{7}{2}\log\frac{c+t_1+t_2}{c}-\frac{7}{2}\frac{t_1+t_2}{c+t_1+t_2}+\nu.\tag{6.4}
	\end{align*}
	Now let $t_1=t_1(c)$ be the function satistying the condition (6.4). By numerical calculation, we present the graph of $t_1(c)$:
	
	\begin{figure}[ht]
		\centering
		\includegraphics[width=0.9\textwidth]{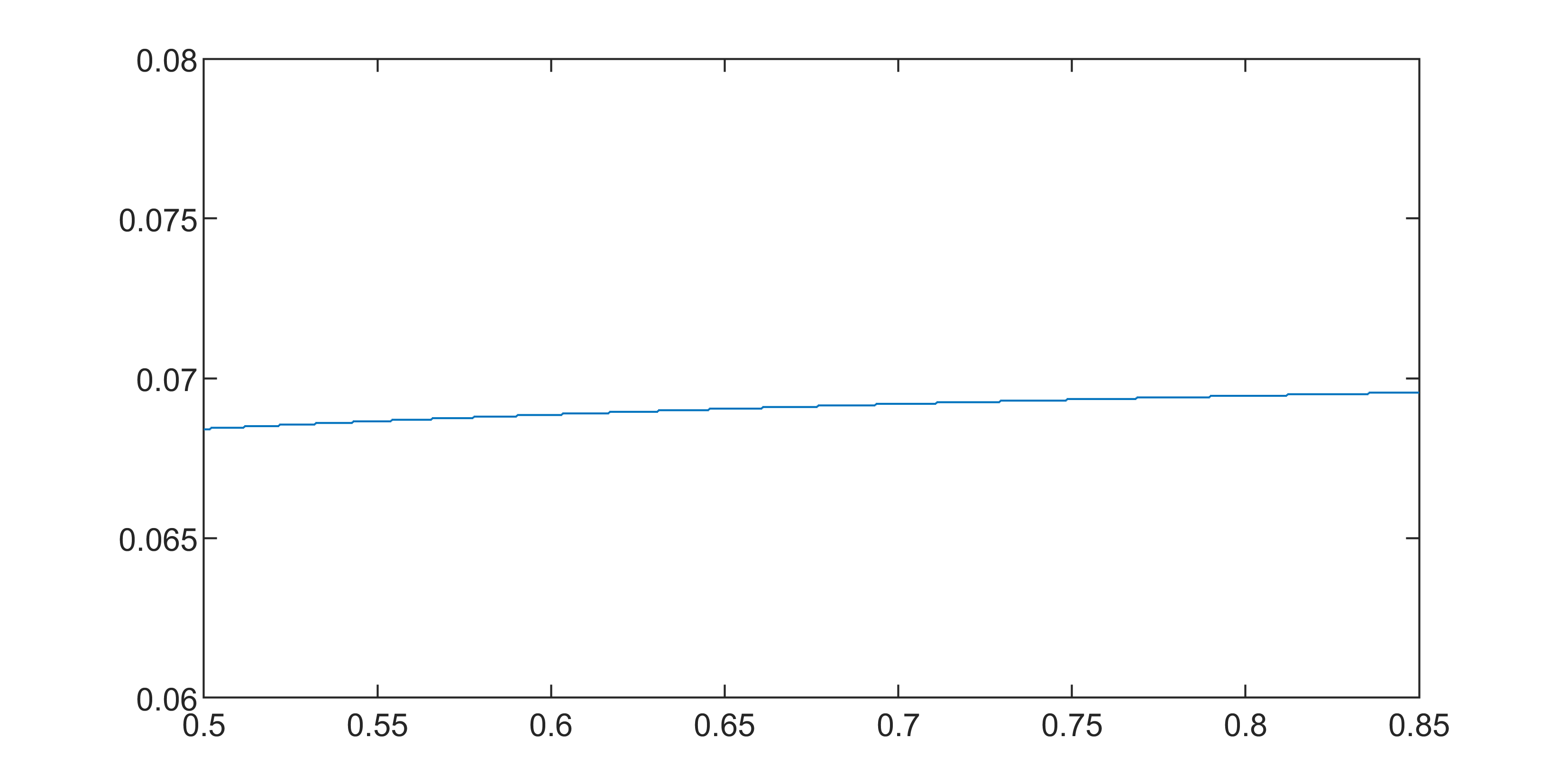} 
		\caption{$c\rightarrow t_1(c)$}
	\end{figure}
	
	By Lemma 6.1, (6.2) and $\mathcal{D}\geq 0$, for $1/2<c<1$, we have\begin{align*}
		T_c(x)\leq \frac{1}{2(c+t_1)}\frac{x}{\log x}+o\left(\frac{x}{\log x}\right).
	\end{align*}
	We also write
	\begin{align*}
		T_c(x)\leq \frac{1-\delta}{2c}\frac{x}{\log x}+o\left(\frac{x}{\log x}\right),
	\end{align*}
	where $1/2<c<1$ and $\delta=\delta(c)=t_1/(c+t_1)>0$. We complete the proof of Theorem 1.6.

	\setcounter{theorem}{1}
	
	\begin{remark}
		For (6.3), we use Ding and Wang's results [7] for convenience. In fact, we can achieve better results by considering the following question: What function $C(\eta)$ satisfies the inequality $\#\{p<x:p\equiv 1\mkern-7mu\pmod{q}\}\leq (C(\eta)+\varepsilon)x/(\varphi(q)\log x)$ for almost all $q\sim x^{\eta}$, while the number of exceptions $q$ is less than \(x^\eta (\log x)^{-A}\)? This question has been studied by many scholars. For example, we can see Fouvry's work [11, Theorem 3]. 
	\end{remark}

	\noindent\textbf{Acknowledgements}\ \ The author thanks Professor Zhiwei Wang for his valuable advice and constant encouragement.


\begin{thebibliography}{99}
		
		
		
		\bibitem{}E. Bombieri, J. B. Friedlander and H. Iwaniec, \textit{Primes in arithmetic progressions to large moduli}, Acta Math., \textbf{156} (1986), no.~3-4, 203--251.
		
		\bibitem{}F. Chen and Y. Chen, \textit{On the largest prime factor of shifted primes}, Acta Math. Sin. (Engl. Ser.), \textbf{33} (2017), no.~3, 377-382.
		
		\bibitem{}J. M. De Koninck and N. Doyon, \textit{On the distance between smooth numbers}, Integers, \textbf{11} (2011), 647--669.
		
		\bibitem{}R. de la Bretèche, C. Pomerance and G. Tenenbaum, \textit{Products of ratios of consecutive integers}, Ramanujan J., \textbf{9} (2005), no.~1-2, 131--138.
		
		\bibitem{}Y. Ding, \textit{On a conjecture on shifted primes with large prime factors}, Arch. Math. (Basel), \textbf{120} (2023), no.~3, 245--252.
		
		\bibitem{}Y. Ding, \textit{On a conjecture on shifted primes with large prime factors, II}, Bull. Aust. Math. Soc., \textbf{111} (2025), no.~1, 48--55.
		
		\bibitem{}Y. Ding and Z. Wang, \textit{Note on shifted primes with large prime factors}. arXiv:2510.04026, preprint (2025).
		
		\bibitem{}P. Erd\H{o}s, \textit{On the normal number of prime factors of $p-1$ and some related problems concerning Euler’s $\phi$-function}, Q. J. Math., Oxf. Ser., \textbf{6} (1935), 205--213.
		
		\bibitem{}P. Erd\H{o}s, \textit{Some unconventional problems in number theory}, Astérisque, \textbf{61} (1979), 73--82.
		
		\bibitem{}P. Erd\H{o}s and C. Pomerance, \textit{On the largest prime factors of $n$ and $n+1$}, Aequationes Math., \textbf{17} (1978), no.~2-3, 311--321.
		
		\bibitem{}E. Fouvry, \textit{Théorème de Brun-Titchmarsh: application au théorème de Fermat} (in French), Invent. Math., \textbf{79} (1985), no.~2, 383--407.
		
		\bibitem{Fouvry}E. Fouvry, G. Tenenbaum, \textit{Entiers sans grand facteur premier en progressions arithmetiques}, Proc. London Math. Soc. (3), \textbf {63} (1991), no.~3, 449--494.
		
		\bibitem{Hildebrand}A. Hildebrand, \textit{On the number of positive integers $\leq x$ and free of prime factors $>y$}, J. Number Theory, \textbf {22} (1986), no.~3, 289--307.
		
		\bibitem{Iwaniec}H. Iwaniec, \textit{A new form of the error term in the linear sieve}, Acta Arith., \textbf {37} (1980), 307--320.
		
		\bibitem{}Y. Jiang, G. Lü and Z. Wang, \textit{Averaged forms of two conjectures of Erd\H{o}s and Pomerance, and their applications}, Adv. Math., \textbf{409} (2022), part A, Paper No. 108592, 42 pp.
		
		\bibitem{Lü}X. Lü, Z. Wang and B. Chen, \textit{On the smooth values of shifted almost-primes}, Int. J. Number Theory, \textbf{15} (2019), no.~1, 1--9.
		
		\bibitem{}X. Lü and Z. Wang, \textit{On the largest prime factors of consecutive integers}, Monatsh. Math., \textbf{206} (2025), no.~2, 403--418.
		
		\bibitem{}C.D. Pan, X. Ding and Y. Wang, \textit{On the representation of every large even integer as a sum of a prime and an almost prime}, Sci. Sinica, \textbf{18} (1975), no.~5, 599--610.
		
		\bibitem{}A. Pascadi, \textit{Smooth numbers in arithmetic progressions to large moduli},
		Compos. Math., \textbf{161} (2025), no.~8, 1923--1974.
		
		\bibitem{Pascadi}A. Pascadi, \textit{On the exponents of distribution of primes and smooth numbers}. arXiv:2505.00653, preprint (2025).
		
		\bibitem{}J. Rivat, \textit{On pseudo-random properties of $P(n)$ and $P(n+1)$}, Period. Math. Hungar., \textbf{43} (2001), no.~1-2, 121--136.
		
		\bibitem{}V.T. Sós, \textit{
			Turbulent years: Erd\H{o}s in his correspondence with Turán from 1934 to 1940}, in: Paul Erdős and His Mathematics, I (Budapest, 1999), Bolyai Soc. Math. Stud. 11 (János Bolyai Math. Soc., Budapest, 2002), 85--146.
		
		\bibitem{}T. Tao and J. Ter\"av\"ainen, \textit{The structure of correlations of multiplicative functions at almost all scales, with applications to the Chowla and Elliott conjectures}, Algebra Number Theory, \textbf{13} (2019), no.~9, 2103--2150.
		
		\bibitem{}G. Tenenbaum, \textit{Some of Erd\H{o}s' unconventional problems in number theory, thirty-four years later}, in: Erd\H{o}s Centennial, Bolyai Soc. Math. Stud. 25, János Bolyai Math Soc., Budapest, 2013, 651–681.  
		
		\bibitem{}J. Ter\"av\"ainen, \textit{On binary correlations of multiplicative functions}, Forum Math. Sigma, \textbf{6} (2018),  Paper No. e10, 41 pp.
		
		\bibitem{}Z. Wang, \textit{On the largest prime factors of consecutive integers in short intervals}, Proc. Amer. Math. Soc., \textbf{145} (2017), no.~8, 3211--3220.
		
		\bibitem{}Z. Wang, \textit{Autour des plus grands facteurs premiers d'entiers consécutifs voisins d'un entier criblé}, Q. J. Math., \textbf{69} (2018), no.~3, 995--1013. 
		
		\bibitem{Wang}Z. Wang, \textit{Sur les plus grands facteurs premiers d'entiers consécutifs}, Mathematika, \textbf{64} (2018), no.~2, 343--379.
		
		\bibitem{}Z. Wang, \textit{Three conjectures on $P^+(n)$ and $P^+(n+1)$ hold under the Elliott-Halberstam conjecture for friable integers}, J. Number Theory, \textbf{223} (2021), 1--11.
		
		\bibitem{}D. Wolke, \textit{Über die mittlere Verteilung der Werte zahlentheoretischer Funktionen auf Restklassen. II} (in German),
		Math. Ann., \textbf{204} (1973), 145--153.
		
		\bibitem{}J. Wu, \textit{On shifted primes with large prime factors and their products}, Arch. Math. (Basel), \textbf{112} (2019), no.~4, 387--393.
		
		
		
		
		
		
		
		
		
	\end{thebibliography}
\end{document}